\newcommand{\lvt}{\left|\kern-1.35pt\left|\kern-1.3pt\left|}
\newcommand{\rvt}{\right|\kern-1.3pt\right|\kern-1.35pt\right|}
\newtheorem{thm}{Theorem}[section]
\newtheorem{cor}[thm]{Corollary}
\newtheorem{lem}[thm]{Lemma}
\newtheorem{prop}[thm]{Proposition}
\newtheorem{defn}[thm]{Definition}
\theoremstyle{remark}
\newtheorem{rem}{Remark}[section]
 \def\la{{\langle}}
 \def\ra{{\rangle}}
\def\one{{\mathbf 1}}
 \def\sph{{\mathbb{S}^{d-1}}}
 \def\sb{{\mathsf b}}
 \def\db{{\mathsf d}}
 \def\sh{{\mathsf h}}
 \def\db{{\mathsf D}}
 \def\sK{{\mathsf K}}
 \def\sT{{\mathsf T}}
 \def\sP{{\mathsf P}}
 \def\sQ{{\mathsf Q}}
 \def\sS{{\mathsf S}}
 \def\d{{\mathrm{d}}}
 \def\a{{\alpha}}
 \def\b{{\beta}}
 \def\g{{\gamma}}
 \def\l{{\lambda}}
 \def\o{{\omega}}
 \def\s{\sigma}
 \def\la{{\langle}}
 \def\ra{{\rangle}}
 \def\bb{{\mathbf b}}
 \def\db{{\mathbf d}}
 \def\kb{{\mathbf k}}
 \def\hb{{\mathbf h}}
 \def\Kb{{\mathbf K}}
 \def\Pb{{\mathbf P}}
 \def\Qb{{\mathbf Q}}
 \def\Sb{{\mathbf S}}
 \def\Tb{{\mathbf T}}
 \def\CH{{\mathcal H}}
 \def\CT{{\mathcal T}}
 \def\CV{{\mathcal V}}
 \def\BB{{\mathbb B}}
 \def\NN{{\mathbb N}}
 \def\RR{{\mathbb R}}
 \def\SS{{\mathbb S}}
 \def\UU{{\mathbb U}}
 \def\VV{{\mathbb V}}
      \def\proj{\operatorname{proj}}
\def\lla{\langle{\kern-2.5pt}\langle}      
\def\rra{\rangle{\kern-2.5pt}\rangle}
\def\f{\frac}
\begin{document}

\title{Fourier orthogonal series on a paraboloid}
 
\author{Yuan Xu}
\address{Department of Mathematics\\ University of Oregon\\
    Eugene, Oregon 97403-1222.}\email{yuan@uoregon.edu}

\date{\today}
\keywords{Orthogonal polynomials, Fourier orthogonal series, surface, paraboloid}
\subjclass[2010]{42C05, 42C10, 33C50}

\begin{abstract} 
We study orthogonal structures and Fourier orthogonal series on the surface of a paraboloid 
$\VV_0^{d+1} = \{(x,t): \|x\| = \sqrt{t}, \, x \in \RR^d, \, 0\le t<1\}$. The reproducing kernels of the 
orthogonal polynomials with respect to $t^\b(1-t)^\g$ on $\VV_0^{d+1}$ are related to the 
reproducing kernels of the Jacobi 
polynomials on the parabolic domain $\{(x_1,x_2): x_1^2 \le x_2 \le 1\}$ in $\RR^2$. 
This connection serves as an essential tool for our study of the Fourier orthogonal series 
on the surface of the paraboloid, which allow us, in particular, to study the convergence
of the Ces\`aro means on the surface. Analogous results are also established for the 
solid paraboloid bounded by $\VV_0^{d+1}$ and the hyperplane $t=1$. 
\end{abstract}

\maketitle

\section{Introduction}
\setcounter{equation}{0}
 
The Laplace series, so named the generalized Fourier series in spherical harmonics on the unit sphere, has 
been extensively studied. One essential ingredient for understanding these series is the addition formula for 
spherical harmonics, which states that the reproducing kernel of the orthogonal projection operator from $L^2(\sph)$ 
onto the space of spherical harmonics of degree $n$ can be written as $Z_n (\la \xi,\eta\ra)$, where $Z_n$ 
is a Gegenbauer polynomial of degree $n$ in one variable and $\la \xi,\eta\ra$ is the Euclidean inner product 
of $\xi,\eta \in \sph$. Because of this closed formula of the reproducing kernels, much of the study of the 
Laplace series can be reduced to the study of the Fourier-Gegenbauer series of one variable \cite{AF, DaiX, SW}. 

The above narrative turns out to be the prototype for Fourier orthogonal series domains in 
higher dimensions. In the past two decades, starting from the addition formula for classical orthogonal 
polynomials on the unit ball \cite{X99}, closed form formulas for reproducing kernels of orthogonal 
polynomials have been discovered for several regular domains, including the unit ball, regular simplex, 
cylinder, as well as the unit sphere with inner product defined by weighted integrals, which makes study 
of the Fourier orthogonal series on these domain feasible; see, for example, \cite{OC, CFX, DaiX, DX, 
KePX, KL, KPX, SS, Wade, W, WZ, X99, X15} and their references. For unbounded classical domains, 
we refer to \cite{Tha} as well as to \cite{BRT, Tha2} for references on more recent works, which however 
require techniques beyond our narrative. 

To step beyond the regular domains, we recently started to analyze orthogonal structure on quadratic 
surfaces of revolutions other than the unit sphere as well as on domains bounded by such quadratic
surfaces. Let $\VV_0^{d+1}$ be a quadratic surface in $\RR^{d+1}$, parametrized in $(x,t)$, $x \in \RR^d$ 
and $t\in \RR$, that is a surface of revolution around the $t$ axis. We consider orthogonal structure defined by 
the inner product 
$$
   \la f,g\ra = \int_{\VV_0^{d+1}} f(x,t) g(x,t) \varpi (t) \d\s(x,t),
$$
where $\varpi(t)$ is a weight function and $\d\s$ is the Lebesgue measure of $\VV_0^{d+1}$. Taking the cue 
of spherical harmonics, we look for families of orthogonal polynomials that share two characteristic properties 
of spherical harmonics, one is an addition formula and the other is the existence of a second order partial 
differential operator that has orthogonal polynomials as eigenfunctions with eigenvalues depending only 
on the degree of the polynomials, which is an analogue of the Laplace-Beltrami operator on the unit sphere.

In \cite{X19} we studied orthogonal polynomials on the surface of the cone
$$
       \VV_0^{d+1} = \left \{(x,t): \|x\| = t, \, 0 \le t \le b, \, x\in \RR^d \right\},
$$ 
where $b=1$ or $b = \infty$, and identified two families of orthogonal polynomials that are eigenfunctions of 
a differential operator, the Laguerre polynomials on the cone with $b = +\infty$ and the Jacobi polynomials on 
the cone with $b=1$, and also established that the Jacobi family possesses an addition formula, which allows  
us to carry out a preliminary study of the Fourier orthogonal series on the surface of the cone. Moreover, 
analogous results were also established on the domain bounded by the surface of the cone, together with the 
hyperplane $t=1$ when $b=1$. In \cite{X20}, we considered the orthogonal structure on the surface of the 
hyperboloid
$$
       \VV_0^{d+1} = \left\{(x,t): \|x\|^2 = c^2 (t^2 - \varrho^2),\,  x\in \RR^d, \, \varrho \le |t| \le b\right\},
$$ 
where $\varrho \ge 0$ and  $b=1+\varrho$ or $\infty$, and it degenerates to the double cone when $\varrho =0$. 
In this case the weight function $\varpi$ is an even function. We again identified two families 
of orthogonal polynomials, the Hermite polynomials on the hyperboloid with $b=\infty$ and the Gegenbauer 
polynomials on the hyperboloid with $b= 1 + \rho$. However, for these two families, only those polynomials 
that are even in $t$ are eigenfunctions of a differential operator. Furthermore, the addition formula holds 
for the Gegenbauer polynomials on the hyperboloid that are even in $t$. These results allowes us to carry out 
a study of the Fourier orthogonal series for functions that are even in $t$ over the hyperboloid. Moreover, 
analogous results were also established on the domain bounded by the surface of the hyperboloid, together 
with the hyperplane $t=1$ when $b=1+\varrho$. 

In the present paper, we study orthogonal structure on a paraboloid of revolution, which turns out to be 
very different from those on the cone and on the hyperboloid. We shall consider the surface of paraboloid 
defined by 
$$
       \VV_0^{d+1} = \left\{(x,t): \|x\| = \sqrt{t},\,  x\in \RR^d, \,  0 \le t \le b\right\}
$$
as well as the solid paraboloid $\VV^{d+1}$ bounded by $\VV_0^{d+1}$ and the hyperplane $t=b$. For
our study we shall consider only $b =1$, or the compact case, since there is not as much that makes 
the case $b=\infty$ standing out for paraboloids.  
 
On the surface of the paraboloid, we consider a family of orthogonal polynomials with respect to the 
weight function $\varpi(t) = t^\b(1-t)^\g$, which shall be called the Jacobi polynomials on the paraboloid. 
These polynomials will be shown to be eigenfunctions of a second order differential operator but with the 
eigenvalues depending on two indices, the degree of the polynomials as well as another index that 
depends on the particular choice of the orthogonal basis, in contrast to the cone and the hyperboloid for 
which the corresponding eigenvalue property is independent of the choice of orthogonal bases. The 
Jacobi polynomials on the paraboloid also do not possess an explicit addition formula. What they do 
have is a connection to an orthogonal structure on the parabolic domain 
$\UU = \{(x_1,x_2): x_1^2 \le x_2 \le 1\}$ in $\RR^2$, bounded by the parabola $x_2 =x_1^2$ 
and the line $x_2 =1$. A family of orthogonal polynomials on $\UU$, called the Jacobi polynomials on
the parabolic domain, was first considered in \cite{K75} and they satisfy a product formula as shown 
in \cite{Ko97}. The latter was used to study the Fourier orthogonal series in \cite{CFX}, where the 
essential ingredient is an addition formula that holds when one argument of the reproducing kernel is 
at the corner $(1,1)$ of the domain. Our essential realization is that the reproducing kernel of the 
orthogonal polynomials on the paraboloid can be expressed in terms of the reproducing kernel on the 
parabola domain, which provides us with the tool for studying the Fourier orthogonal series on the 
paraboloid. In particular, it allows us to study the convergence of the Ces\`aro means of the series. 
We shall also show that the connection to the structure on $\UU$ also extends to the solid paraboloid, 
which allows us to carry out our study of the Fourier orthogonal series on the solid paraboloid. 

The paper is organized as follows. In the next section, we review orthogonal structure on the parabolic
domain, where enough detail will be provided to prepare for their usage in latter sections. The orthogonal
structure and the Fourier series on the surface of the paraboloid will be discussed in Section 3, and 
analogous results on the solid paraboloid will be discussed in Section 4.  
 
 
\section{Orthogonal polynomials on a parabolic domain}
\setcounter{equation}{0}

As mentioned in the introduction, our development on $\VV^{d+1}$ depends heavily on what is 
known on the parabolic domain 
$$
\UU = \{x \in \RR^2: x_1^2 \le x_2 \le 1\}, 
$$ 
bounded by the parabola $x_2 = x_1^2$ and the line $x_2=1$, which we review in this section. 
For $a > -1$ and $b > -\f12$, we define the weight function
$$
    U_{a,b}(x) = (1-x_2)^{a} (x_2-x_1^2)^{b-\f12}
$$ 
on $\UU$ and consider orthogonal polynomials with respect to the inner product 
\begin{equation}\label{eq:intBB}
\la f,g\ra_\UU:= \db_{a,b} \int_\UU f(x ) g(x ) U_{a,b}(x ) \d x,
\end{equation}
where the normalization constant $\db_{a,b}$
is chosen so that $\la 1,1\ra_{a,b} = 1$ and its value can be verified by writing the integral over $\UU$ as 
\begin{equation} \label{eq:integralBB}
  \int_\UU f(x_1,x_2) \d x_1 \d x_2 = \int_0^1\int_{-1}^1 f\big (u \sqrt{x_2},x_2 \big) \sqrt{x_2} \d u \d x_2. 
\end{equation}
An orthogonal basis for this inner product was defined in \cite{K75} in terms of the Jacobi polynomials. 
For $\a, \b > -1$, the Jacobi polynomials $P_n^{(\a,\b)}$ are defined by 
$$
  P_n^{(\a,\b)}(t) = \frac{(\a+1)_n}{n!} {}_2F_1 \left (\begin{matrix} -n, n+\a+\b+1 \\
      \a+1 \end{matrix}; \frac{1-t}{2} \right),
$$
and they are orthogonal with respect to the weight function $w_{\a,\b}(x) := (1-x)^\a(1+x)^\b$,  
$$
  c_{\a,\b}' \int_{-1}^1 P_n^{(\a,\b)}(t) P_m^{(\a,\b)}(t) w_{\a,\b}(t) \d t = h_n^{(\a,\b)} \delta_{n,m},
$$
where  $c'_{\a,\b} = 2^{-\a-\b-1} c_{\a,\b}$ with 
\begin{align} \label{eq:JacobiNorm}
  c_{\a,\b} := \frac{\Gamma(\a+\b+2)}{\Gamma(\a+1)\Gamma(\b+1)}\quad\hbox{and}\quad  h_n^{(\a,\b)}:= \frac{(\a+1)_n (\b+1)_n(\a+\b+n+1)}{n!(\a+\b+2)_n(\a+\b+2 n+1)}.
\end{align}
In terms of $c_{\a,\b}$, the constant $\db_{a,b}$ in \eqref{eq:intBB} is given by $\db_{a,b} = c_{b-\f12,b-\f12} c_{b,a}$.
 
Let $\CV_n(\UU,U_{a,b})$, $n=0,1,\ldots$, be the space of orthogonal polynomials of degree $n$
with respect to the inner product $\la \cdot,\cdot\ra_{\UU}$ on the parabolic domain. Then
$
  \dim \CV_n(\UU,U_{a,b}) = n+1. 
$
The orthogonal basis for $\CV_n(\UU,U_{a,b})$ given in \cite{K75} consists of polynomials
\begin{equation}\label{eq:U-basis}
  P_{k,n}^{a,b}(x_1,x_2) = P_{n-k}^{(b+k,a)}(1-2x_2) x_2^{\frac{k}2} P_k^{(b-\f12,b-\f12)}\left(\frac{x_1}{\sqrt{x_2}}\right), \quad 0 \le k \le n. 
\end{equation}
Their orthogonality can be verified by using \eqref{eq:integralBB} and so are their $L^2$ norm. In particular, in
terms of the quantities in \eqref{eq:JacobiNorm}, 
\begin{equation}\label {eq:U-basisNorm}
   h_{k,n}^{a,b} = \db_{a,b} \int_\Omega |P_{k,n}^{a,b}(x)|^2 U_{a,b}(x) \d x = 
       \frac{c_{b,a}}{c_{b+k,a}} h_{n-k}^{(b+k,a)} h_k^{(b-\f12,b-\f12)}.
\end{equation}
The polynomials $P_{k,n}^{(\a,\b)}$ satisfy a product formula due to Koornwinder and Schwartz \cite{Ko97}.
The formula is rather complicated and takes the following form: 

\begin{thm} \label{thm:prod-formulaUU}
Let $a \geq b \geq 0$.  For $x=(x_1,x_2), y=(y_1,y_2)\in \UU$, 
\begin{align}\label{eq:prod-formulaUU}
& P_{n,k}^{a,b}(x_1,x_2)\, P_{n,k}^{a,b}(y_1,y_2) \\
    & \quad\qquad  = P_{n,k}^{a,b}(1,1)\, 
    \int_{[0,1]\times [0,\pi]^3}P_{n,k}^{a,b} \big(\xi_1(x,y; r, \psi_1), \xi_2(x,y; r,\psi)\big)\d m_{a,b}(r,\psi), \notag
\end{align}
where $\psi = (\psi_1,\psi_2,\psi_3)$ and $\d m_{a,b}(r,\psi)$ is a probability measure given by
$$
\d m_{a,b}(r,\psi) =c_{a,b}(1-r^2)^{a-b-1}r^{2 b+1} (\sin\psi_3)^{2b-1} 
 (\sin\psi_2)^{2b-1} (\sin\psi_1)^{2b} \d r \d\psi_1\, \d\psi_2\, \d\psi_3.
$$
\end{thm}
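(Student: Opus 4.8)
The plan is to deduce \eqref{eq:prod-formulaUU} from product formulas already known in one variable, transported to $\UU$ via the quadratic substitution used in \eqref{eq:integralBB}. Writing $x=(x_1,x_2)\in\UU$ as $x_1=\sqrt{x_2}\,s$ with $s\in[-1,1]$, and $y$ likewise, the basis polynomial \eqref{eq:U-basis} splits into the ``radial'' factor $P_{n-k}^{(b+k,a)}(1-2x_2)$ and the ``angular'' factor $x_2^{k/2}P_k^{(b-\f12,b-\f12)}(s)$, the latter an ultraspherical polynomial of index $b$; so the left side of \eqref{eq:prod-formulaUU} becomes a product of two $(b+k,a)$-Jacobi polynomials in $(x_2,y_2)$ times a product of two index-$b$ ultraspherical polynomials in $(s,s')$.

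Two classical inputs enter. The ultraspherical product formula (index $b>0$) rewrites $P_k^{(b-\f12,b-\f12)}(s)P_k^{(b-\f12,b-\f12)}(s')$ as $P_k^{(b-\f12,b-\f12)}(1)$ times an average of $P_k^{(b-\f12,b-\f12)}\!\big(ss'+\sqrt{(1-s^2)(1-(s')^2)}\cos\psi\big)$ against the probability density $\propto(\sin\psi)^{2b-1}$ on $[0,\pi]$; and Gasper's product formula for Jacobi polynomials $P_m^{(\a,\b)}$ with $\a\ge\b$ rewrites $P_m^{(\a,\b)}(u)P_m^{(\a,\b)}(v)$ as $P_m^{(\a,\b)}(1)$ times an average of $P_m^{(\a,\b)}$ at an explicit bilinear argument against the probability density $\propto(1-r^2)^{\a-\b-1}r^{2\b+1}(\sin\psi)^{2\b}$ on $[0,1]\times[0,\pi]$. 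Comparing with $\d m_{a,b}$: its $(r,\psi_1)$-part is exactly this Gasper density with base parameters $(a,b)$, while $(\sin\psi_2)^{2b-1}(\sin\psi_3)^{2b-1}$ is a product of two index-$b$ ultraspherical densities. This fixes which formulas must be composed and in what multiplicity.

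The delicate point is that a naive composition of these formulas leaves the shifted parameters $(b+k,a)$ visible in the measure, in conflict with the stated independence of $\d m_{a,b}$ from $n$ and $k$. The correct composition, due to Koornwinder and Schwartz \cite{Ko97} and resting on the integral representations of \cite{K75}, is organised through the quadratic map $z\mapsto(z_1,\|z\|^2)$, which identifies $(\UU,U_{a,b})$ with the Euclidean ball $\BB^{d}$, $d=2b+2$, carrying the weight $(1-\|z\|^2)^a$ restricted to functions invariant under the rotations fixing $z_1$ --- a literal identification when $2b\in\ZZ_{\ge0}$, under which \eqref{eq:U-basis} corresponds to the ball orthogonal polynomials built from a Jacobi polynomial in $\|z\|^2$ and a zonal (index-$b$ Gegenbauer) harmonic. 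The ball carries a degree-independent translation structure; restricting it to this invariant class and pulling back to $\UU$ yields \eqref{eq:prod-formulaUU}, with $\xi_1,\xi_2$ read off as the explicit composition of the ingredient arguments, and arbitrary real $b\ge0$ follows by analytic continuation in $b$.

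Two verifications close the argument. First, $(\xi_1(x,y;r,\psi_1),\xi_2(x,y;r,\psi))\in\UU$, i.e. $\xi_1^2\le\xi_2\le1$, for all $x,y\in\UU$ and all $(r,\psi)\in[0,1]\times[0,\pi]^3$; through the quadratic map this holds because each ingredient argument stays in its natural domain. Second, the Jacobians of the nested substitutions recombine into the density of $\d m_{a,b}$; that the total is a probability measure is automatic since each ingredient measure is, and evaluating \eqref{eq:prod-formulaUU} on the constant polynomial fixes the normalizing constant as $c_{a,b}$, consistent with \eqref{eq:JacobiNorm}--\eqref{eq:U-basisNorm}. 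The main obstacle is the feature that makes this a genuine theorem: showing that the correct composition absorbs all dependence on the shifted parameters $(b+k,a)$, so that $\d m_{a,b}$ really is independent of $n$ and $k$, while keeping the composed arguments inside $\UU$ and tracking the Jacobians; the value $b=0$, where $(\sin\psi_2)^{2b-1}(\sin\psi_3)^{2b-1}$ is no longer integrable, must be read as the limit $b\to0^+$, in which the $\psi_2$- and $\psi_3$-integrations collapse to point evaluations at the endpoints.
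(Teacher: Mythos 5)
The paper contains no proof of this theorem: it is quoted from Koornwinder and Schwartz \cite{Ko97}, and the text explicitly declines even to record the formulas for $\xi_1$ and $\xi_2$. So there is no in-paper argument to compare yours against, and the only meaningful question is whether your proposal stands on its own. It does not, although it is an accurate map of where the proof lives. You correctly identify the two classical inputs (Gasper's product formula for $P_m^{(\a,\b)}$ with $\a\ge\b$, and the Gegenbauer product formula of index $b$), you correctly match them against the three factors of $\d m_{a,b}$, and you correctly isolate the crux: applied naively to the two factors of \eqref{eq:U-basis}, these formulas produce measures depending on the shifted parameters $(b+k,a)$ --- indeed, after normalizing at $(1,1)$ the radial factor is effectively $P_{n-k}^{(a,\,b+k)}$ near its endpoint $1$, and the hypothesis $a\ge b+k$ of Gasper's positive formula fails for $k>a-b$, so a factorwise application is not even available. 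But at exactly that point your argument reads ``the correct composition, due to Koornwinder and Schwartz, \dots yields \eqref{eq:prod-formulaUU}.'' That is a citation, not a proof: the entire content of the theorem is the existence of a composition in which all $(n,k)$-dependence migrates into the arguments $\xi_1,\xi_2$ while the measure stays fixed, and you never exhibit $\xi_1,\xi_2$, never perform the reassembly, and never verify that the result reproduces the left-hand side.

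A second, smaller gap concerns your passage from integer to real parameters. The identification of $(\UU,U_{a,b})$ with rotation-invariant functions on $\BB^{2b+2}$ is literal only when $2b\in\ZZ_{\ge 0}$, and ``analytic continuation in $b$'' from the discrete set $b\in\tfrac12\ZZ_{\ge 0}$ is not ordinary analytic continuation (a discrete set has no accumulation point in the domain of analyticity); one needs either a Carlson-type uniqueness theorem with growth control in $b$, or --- as in \cite{K75,Ko97} --- integral representations valid for continuous parameters from the outset. Your final verifications (that $(\xi_1,\xi_2)\in\UU$ and that the measure normalizes correctly) are likewise stated rather than carried out, since they require the explicit $\xi_1,\xi_2$ you never write down. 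In short, your proposal is a faithful guide to the structure of the Koornwinder--Schwartz proof, and it is no less than the paper itself offers, but as a self-contained proof it has a hole precisely at the step that makes the theorem nontrivial.
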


The complication of the product formula lies in the functions $\xi_1(x,y; r, \psi_1)$ and 
$\xi_2(x,y; r,\psi)$, which are explicitly given by fairly involved formulas, and they satisfy
$\xi = (\xi_1, \xi_2)\in \UU$. Since we do not need their explicit formulas, we will not state them 
here but refer to \cite{Ko97}. 
 
For $f \in L^2(\UU; U_{a,b})$, its Fourier orthogonal series is defined by
$$
  f = \sum_{n=0}^\infty  \proj_n(U_{a,b}; f) \quad\hbox{with} \quad  
      \proj_n(U_{a,b}; f) = \sum_{k=0}^n \frac{\la f, P_{k,n}^{a,b}\ra_\UU}{h_{k,n}^{a,b}} P_{k,n}^{a,b}.
$$
The operator $\proj_n(U_{a,b}): L^2(\UU; U_{a,b}) \mapsto \CV_n(\UU,U_{a,b})$ is the orthogonal 
projection operator, which can be written as an integral 
$$
\proj_n(U_{a,b}; f, x) = \db_{a,b}\int_{\UU} f(y) \Pb_n(U_{a,b}; x,y) U_{a,b}(y) \d y
$$
in terms of the reproducing kernel $\Pb_n(U_{a,b})$ of $\CV_n(\UU,U_{a,b})$. The kernel is uniquely 
determined and it satisfies, in terms of the orthogonal basis \eqref{eq:U-basis},  
$$
\Pb_n(U_{a,b}; x,y) = \sum_{k=0}^n \frac{P_{k,n}^{a,b}(x) P_{k,n}^{a,b} (y)} {h_{k,n}^{a,b}}.
$$

The product formula \eqref{eq:prod-formulaUU} leads to a convolution structure, which can be defined as follows. 
Let $\xi_1(x,y; r, \psi_1)$ and $\xi_2(x,y; r,\psi)$ be given in \eqref{eq:prod-formulaUU}. 
For $g \in C(\UU)$, define 
$$
  (\CT_x g)(y) := \int_{[0,1]\times [0,\pi]^3} g\big(\xi_1(x,y; r, \psi_1), \xi_2(x,y; r,\psi)\big)\d m_{a,b}(r,\psi).
$$
The operator $\CT_x$ is an analogue of the translation operator, since the product formula can be written
as 
\begin{equation} \label{eq:prodUab}
 P_{k,n}^{a,b}(x) P_{k,n}^{a,b} (y) = P_{k,n}^{a,b}(\one) \big(\CT_x P_{k,n}^{a,b}\big) (y), \qquad \one = (1,1). 
\end{equation}
This generalized translation operator is bounded in the space $L^p(\UU; U_{\a,b})$. 

\begin{lem}
Let $g \in L^p(\UU;U_{a,b})$, $1 \le p < \infty$, or $g\in C(\UU)$, $p = \infty$. Then, for $x \in \UU$, 
\begin{equation}\label{eq:gen-transU}
  \|\CT_x g \|_{L^p(\UU; U_{a,b})} \le \|g\|_{L^p(\UU;U_{a,b})}, \qquad 1 \le p \le \infty,
\end{equation}
where the norm is taken as the uniform norm on $\UU$ when $p = \infty$. 
\end{lem}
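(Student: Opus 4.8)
The plan is to exploit the fact that, for each fixed $x\in\UU$ (with $a\ge b\ge 0$ as in Theorem~\ref{thm:prod-formulaUU}), the operator $\CT_x$ averages $g$ against the \emph{probability} measure $\d m_{a,b}(r,\psi)$ on $[0,1]\times[0,\pi]^3$; it is a probability measure by Theorem~\ref{thm:prod-formulaUU}, or equivalently because taking $k=n=0$ in \eqref{eq:prodUab} together with $P_{0,0}^{a,b}\equiv 1$ gives $\CT_x\one=\one$. From this the case $p=\infty$ is immediate: $|(\CT_x g)(y)|\le\int|g|\,\d m_{a,b}\le\|g\|_{L^\infty(\UU)}$ for every $y$. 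For $1\le p<\infty$ I would first prove \eqref{eq:gen-transU} for $g\in C(\UU)$ and then pass to arbitrary $g\in L^p(\UU;U_{a,b})$ by density of $C(\UU)$, defining $\CT_x$ on $L^p$ as the resulting bounded extension (the defining integral then converging for $U_{a,b}$-a.e.\ $y$, by Tonelli's theorem once the bound is known).

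The heart of the argument is the mean-preservation identity
\begin{equation}\label{eq:meanpres}
  \db_{a,b}\int_\UU (\CT_x h)(y)\,U_{a,b}(y)\,\d y=\db_{a,b}\int_\UU h(y)\,U_{a,b}(y)\,\d y,\qquad h\in C(\UU),
\end{equation}
which I would establish first for polynomials and then extend by continuity. For a polynomial $h$, write $h=\sum_{n}\sum_{k=0}^{n}c_{k,n}P_{k,n}^{a,b}$ as a finite sum over the basis \eqref{eq:U-basis}; by \eqref{eq:prodUab}, $(\CT_x P_{k,n}^{a,b})(y)=\big(P_{k,n}^{a,b}(x)/P_{k,n}^{a,b}(\one)\big)P_{k,n}^{a,b}(y)$, so $\CT_x h$ is again a polynomial whose degree-zero component coincides with that of $h$ (namely $c_{0,0}$, since $P_{0,0}^{a,b}\equiv 1$ and $P_{0,0}^{a,b}(x)/P_{0,0}^{a,b}(\one)=1$), while $\db_{a,b}\int_\UU P_{k,n}^{a,b}U_{a,b}\,\d y=\la P_{k,n}^{a,b},1\ra_\UU$ vanishes for $n\ge1$ by orthogonality to constants. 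Hence both sides of \eqref{eq:meanpres} reduce to $c_{0,0}\la 1,1\ra_\UU=c_{0,0}$. Since $\CT_x$ is trivially a contraction on $C(\UU)$ and polynomials are uniformly dense in $C(\UU)$ by the Weierstrass theorem, both sides of \eqref{eq:meanpres} are continuous in $h$ for the uniform norm, so the identity extends to all $h\in C(\UU)$.

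With \eqref{eq:meanpres} in hand the inequality follows from Jensen's inequality. For $g\in C(\UU)$ and $1\le p<\infty$, note $|g|^p\in C(\UU)$ and apply Jensen to the probability measure $\d m_{a,b}(r,\psi)$ to get $|(\CT_x g)(y)|^p\le(\CT_x|g|^p)(y)$ for every $y\in\UU$; multiplying by $\db_{a,b}U_{a,b}(y)$, integrating over $\UU$, and invoking \eqref{eq:meanpres} with $h=|g|^p$ yields
$$
  \|\CT_x g\|_{L^p(\UU;U_{a,b})}^p\le\db_{a,b}\int_\UU(\CT_x|g|^p)(y)\,U_{a,b}(y)\,\d y=\db_{a,b}\int_\UU|g(y)|^p\,U_{a,b}(y)\,\d y=\|g\|_{L^p(\UU;U_{a,b})}^p,
$$
which is \eqref{eq:gen-transU} for $g\in C(\UU)$. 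Passing to arbitrary $g\in L^p(\UU;U_{a,b})$ by density of $C(\UU)$ finishes the proof.

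I do not expect a serious obstacle: the substantive analytic input is the product formula of Theorem~\ref{thm:prod-formulaUU}, which is quoted. The points needing a little care are pure bookkeeping — verifying $P_{0,0}^{a,b}\equiv 1$ and $P_{k,n}^{a,b}(\one)\ne 0$ so that \eqref{eq:prodUab} is meaningful, the (legitimate, because finite) interchange of $\CT_x$ with the sum in the proof of \eqref{eq:meanpres}, and the density/extension argument together with Tonelli's theorem used to make sense of $\CT_x g$ pointwise a.e.\ when $g\in L^p$ with $p<\infty$.
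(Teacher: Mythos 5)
Your proof is correct, and it rests on the same essential ingredient as the paper's: the mean--preservation identity $\db_{a,b}\int_\UU (\CT_x h)(y)\,U_{a,b}(y)\,\d y=\db_{a,b}\int_\UU h(y)\,U_{a,b}(y)\,\d y$, derived from the product formula \eqref{eq:prodUab} together with the orthogonality of $P_{k,n}^{a,b}$ to constants for $n\ge 1$. (The paper phrases this as ``expand $|g|$ in its Fourier orthogonal series, apply the product formula termwise and integrate''; your polynomials--plus--density version of the same computation is in fact the cleaner way to say it, since it sidesteps any question about pointwise convergence of the Fourier series of $|g|$.) Where you genuinely diverge is in the passage from the endpoint cases to general $p$: the paper establishes $p=1$ and $p=\infty$ and then invokes the Riesz--Thorin interpolation theorem, whereas you apply Jensen's inequality to the probability measure $\d m_{a,b}$ to get $|\CT_x g(y)|^p\le \CT_x(|g|^p)(y)$ and then feed $h=|g|^p$ directly into the mean--preservation identity. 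Both routes are valid; yours is more elementary and self-contained, avoiding interpolation machinery and making transparent that the only inputs are the positivity of $\CT_x$, the normalization $\CT_x\one=\one$, and the invariance of the measure $U_{a,b}(y)\,\d y$ under $\CT_x$ --- i.e., that $\CT_x$ is a measure-preserving Markov operator. The bookkeeping points you flag are all routine: $P_{k,n}^{a,b}(\one)=P_{n-k}^{(b+k,a)}(-1)\,P_k^{(b-\f12,b-\f12)}(1)\ne 0$, the interchange of $\CT_x$ with a finite sum is trivial, and the density and a.e.-definedness arguments for $g\in L^p$ are standard (the paper glosses over the latter entirely). As you note, the whole argument tacitly requires $a\ge b\ge 0$ so that Theorem \ref{thm:prod-formulaUU} applies; the lemma inherits this hypothesis from the product formula.
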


This is stated and used in \cite{CFX}. Its proof follows from the product formula. Indeed, for $p = 1$, we 
use $|\CT_x g(y)| \le \CT_x (|g|)(y)$ and expand $|g|$ in its Fourier orthogonal series over $\UU$, then
we use the product formula on the Fourier series of $\CT_x (|g|)$ and integrate it to obtain, by orthogonality,
the identity $\|\CT_x |g| \|_{L^1(\UU; U_{a,b})} = \|g\|_{L^1(\UU;U_{a,b})}$. Thus, \eqref{eq:gen-transU} 
holds for $p=1$. The inequality is also trivial for $p = \infty$. The case $1< p < \infty$ then follows from 
the Riesz-Thorin theorem. 

The boundedness of $\CT_x g$ can be used to study the convergence of the Fourier orthogonal 
expansion on $\UU$. For $p \ne 2$, we need to consider a summability method. We choose the
Ces\`aro $(C,\delta)$ means, which can be given in terms of $\proj_n(U_{a,b};f)$ or the partial sum
operator $S_n(U_{a,b};f) = \sum_{k=0}^n \proj_k(U_{a,b};f)$. For the reason that will become clear later, 
we choose the latter one. The operator $S_n(U_{a,b};f)$ has the kernel 
$$
\Kb_n(U_{a,b};x,y) = \sum_{k=0}^n \Pb_k(U_{a,b};x,y).
$$ 
For $\delta > 0$, let $\Kb_n^\delta \big(U_{a,b}\big)$ denote the kernel for the Ces\`aro $(C,\delta)$ 
means, which can be written in terms of $ \Pb_k(U_{a,b})$ or $\Kb_m \big(U_{a,b}\big)$. In particular, 
\begin{equation} \label{eq:C-KernelU}
   \Kb_n^\delta (U_{a,b};x,y) = \frac{1}{\binom{n+\delta}{n}} \sum_{m=0}^n \binom{n-m+\delta-1}{n-m} 
       \Kb_m(U_{a,b}; x,y).
\end{equation}
Then the $(C,\delta)$ means $S_n^\delta (U_{a,b}; f)$ of the Fourier orthogonal series satisfy
\begin{equation*} 
   S_n^\delta (U_{a,b}; f) = \db_{a,b} \int_{\UU} f(y) \Kb_n^\delta (U_{a,b}; x,y) U_{a,b}(y) \d y. 
\end{equation*}
Since this is a linear integration operator, a standard argument shows that $S_n^\delta f$ converges
to $f$ in $L^1(\UU, U_{a,b})$ norm or in the uniform norm if and only if 
\begin{equation} \label{eq:KernelL1bdd}
   \max_{x \in \UU} \big \| \Kb_n^\delta \big(U_{a,b}; x, \cdot\big) \big \|_{L^1 (\UU;U_{a,b})} = \max_{x\in \UU}
      \db_{a,b}\int_{\UU}  \big|\Kb_n^\delta \big(U_{a,b}; x, y \big)\big| U_{a,b}(y) \d y < \infty
\end{equation}
uniformly in $n$. Now, by the definition of $\CT_x$, we have 
$$
   \Kb_n^\delta (U_{a,b}; x,y) = \CT_x \Kb_n^\delta (U_{a,b}; \one, \cdot)(y), 
$$
so that it follows from the inequality \eqref{eq:gen-transU} that 
\begin{equation} \label{eq:CesaroUat1}
    \max_{x \in \UU} \big \| \Kb_n^\delta \big(U_{a,b}; x, \cdot\big) \big \|_{L^1 (\UU;U_{a,b})} 
      \le  \big \| \Kb_n^\delta \big(U_{a,b}; \one, \cdot\big) \big \|_{L^1 (\UU;U_{a,b})}. 
\end{equation}
In particular, this shows that the convergence of $S_n^\delta(U_{a,b}; f)$ follows from the convergence
at the point $x = \one = (1,1)$. 

More generally, we could define a convolution structure for $f, g  \in L^1(\UU; U_{a,b})$ by
$$
   f \ast_U g(x):= \int_\UU f(y) (\CT_x g)(y) U_{a,b} (y) \d y, \qquad x \in \UU. 
$$
Since $\CT_x g(y)$ is symmetric in $x$ and $y$ by \eqref{eq:prodUab}, it is not difficult to see that this 
convolution is associative and commutative, as can be seen by by considering, for example, polynomials first.
Furthermore, using \eqref{eq:gen-transU}, 
it follows readily that, for $f \in L^p(\UU; U_{a,b})$ and $g \in L^1(\UU; U_{a,b})$, 
\begin{equation}\label{eq:convoluU}
      \|f\ast_U g\|_{L^p(\UU;U_{a,b})}\leq\|f\|_{L^p(\UU;U{a,b})}\, \|g\|_{L^1(\UU;U_{a,b})},\quad 1\leq p\leq\infty.
\end{equation}
The projection operators $\proj_n(U_{a,b}; f)$ and the Ces\`aro means can be written as 
\begin{equation} \label{eq:convoluSn}
\proj_n (U_{a,b}; f) =  f \ast_U \Pb_n\big(U_{a,b}; \one, \cdot\big), \qquad 
   S_n^\delta (U_{a,b}; f) =  f \ast_U \Kb_n^\delta \big(U_{a,b}; \one, \cdot\big),
\end{equation}
so that it again follows, by \eqref{eq:convoluU}, that the convergence of $S_n^\delta(U_{a,b}; f)$ reduces
to the boundedness of $\big \| \Kb_n^\delta \big(U_{a,b}; \one, \cdot\big) \big \|_{L^1 (\UU;U_{a,b})}$ of a 
single point.

It turns out that the kernel $\Kb_n (U_{a,b}; \one, \cdot)$ satisfies a closed formula \cite{CFX}. 

\begin{thm} \label{thm:Kn(1,x)}
For $a > -1$, $b > -\f12$ and $x \in \UU$, 
\begin{align} \label{eq:Kn(1,x)}
\Kb_n(U_{a,b};\one, x) = &\, \frac{P_n^{(a+b+1,b)}(1)}{h_n^{(a+b+1,b)}} 
   c_{\a+b+1,b}\int_{-1}^1 P_n^{(a+b+1,b)}(z(x,t))w_{a+b+1,b}(t)\ \d t, 
\end{align}
where  
$$
  z(x,t) =  1- (1-t^2)(1-x_1) - \frac{1}{2}(1-t)^2 (1-x_2).
$$
\end{thm}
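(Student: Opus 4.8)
The plan is to compute $\Kb_n(U_{a,b};\one,\cdot)$ directly from its expansion in the basis \eqref{eq:U-basis} and to collapse the resulting double sum by means of one-dimensional Jacobi identities; the integral in \eqref{eq:Kn(1,x)} is then extracted from a generating-function computation in the one remaining summation index.

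First I would evaluate the basis at the corner. Since $1-2x_2=-1$, $x_2^{k/2}=1$ and $x_1/\sqrt{x_2}=1$ at $\one=(1,1)$, the values factor as $P_{k,m}^{a,b}(\one)=P_{m-k}^{(b+k,a)}(-1)\,P_k^{(b-\f12,b-\f12)}(1)$, which I evaluate by $P_\ell^{(\a,\b)}(1)=\binom{\ell+\a}{\ell}$ and $P_\ell^{(\a,\b)}(-1)=(-1)^\ell\binom{\ell+\b}{\ell}$. Substituting this together with \eqref{eq:U-basisNorm}, the relation $c_{b+k,a}/c_{b,a}=(a+b+2)_k/(b+1)_k$, and the Gegenbauer identity $P_k^{(b-\f12,b-\f12)}(1)\,P_k^{(b-\f12,b-\f12)}(u)/h_k^{(b-\f12,b-\f12)}=\tfrac{k+b}{b}C_k^b(u)$, each summand of $\Pb_m(U_{a,b};\one,x)$ splits into an ``angular'' factor $\tfrac{k+b}{b}C_k^b(u)\,x_2^{k/2}$, which does not involve $m$, and a ``radial'' Jacobi factor. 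Writing $\Kb_n=\sum_{m=0}^n\Pb_m$ and interchanging the $m$- and $k$-sums, the inner sum over $j:=m-k$ is the one-dimensional reproducing kernel $\sum_{j=0}^{n-k}P_j^{(b+k,a)}(-1)P_j^{(b+k,a)}(1-2x_2)/h_j^{(b+k,a)}$ of the Jacobi system $\{P_j^{(b+k,a)}\}$ at the endpoint $-1$. Using the standard endpoint identity $\sum_{j=0}^{N}P_j^{(\a,\b)}(1)P_j^{(\a,\b)}(s)/h_j^{(\a,\b)}=\frac{(\a+\b+2)_N}{(\b+1)_N}P_N^{(\a+1,\b)}(s)$ --- which follows from the observation that $(1-s)$ times the kernel is $w_{\a,\b}$-orthogonal to $\Pi_{N-1}$, hence a multiple of $P_N^{(\a+1,\b)}$, the multiple being pinned down at $s=1$ --- together with its reflection at $-1$ and Pochhammer telescoping, I obtain the closed single sum
\begin{equation*}
\Kb_n(U_{a,b};\one,x)=\frac{(a+b+2)_n}{b\,(b+1)_n}\sum_{k=0}^n (k+b)\,C_k^b(u)\,x_2^{k/2}\,P_{n-k}^{(a+1,\,b+k)}(2x_2-1),\qquad u=\frac{x_1}{\sqrt{x_2}};
\end{equation*}
in particular $\Kb_n(U_{a,b};\one,\cdot)\in\CV_n(\UU,U_{a+1,b})$, and for $n=0,1$ this can be checked against $\Pb_0+\Pb_1$.

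It remains to resum over $k$. I would pass to the generating function $\sum_n(\cdots)z^n$: substituting the Jacobi generating function for $\sum_m P_m^{(a+1,b+k)}(2x_2-1)z^m$ and then the elementary identity $\sum_k(k+b)C_k^b(u)w^k=b(1-w^2)(1-2uw+w^2)^{-b-1}$, with $w=\frac{2\sqrt{x_2}\,z}{1+z+R}$ and $R=\sqrt{(1+z)^2-4x_2z}$, yields an explicit algebraic function of $z$. On the other side, substituting the Jacobi generating function into the integral in \eqref{eq:Kn(1,x)}, using that $z(x,t)$ is affine in $x$ and that $w_{a+b+1,b}(t)\,\d t$ becomes $2^{a+2b+2}\tau^{a+b+1}(1-\tau)^b\,\d\tau$ under $\tau=\tfrac{1-t}{2}$, and performing the resulting elementary $\tau$-integral, yields a second algebraic function; matching the two, i.e.\ identifying their $n$th Taylor coefficients, completes the proof. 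It helps to rewrite $z(x,t)=2\bigl|(1-\tau)e_1+\tau(x_1,\sqrt{x_2-x_1^2})\bigr|^2-1$, which exhibits $z$ as a perfect square in $\tau$ and makes the $\tau$-integral transparent.

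The real work is this last matching: both generating functions are elementary once the Jacobi generating function has been inserted, but equating them --- and recovering in particular the exact normalizing constant in front of the integral and the precise form of $z(x,t)$ --- is a somewhat delicate algebraic computation. An alternative route is to verify directly that the right side of \eqref{eq:Kn(1,x)} reproduces $\Pi_n(\UU)$ at $\one$; expanding in the basis \eqref{eq:U-basis}, this reduces to evaluating $\db_{a,b}\int_\UU P_{k,m}^{a,b}(y)\,P_n^{(a+b+1,b)}(z(y,t))\,U_{a,b}(y)\,\d y$, which by \eqref{eq:integralBB} factors into a one-dimensional $x_2$-integral and a Gegenbauer coefficient of $v\mapsto P_n^{(a+b+1,b)}(z)$, and computing that coefficient is the corresponding bottleneck.
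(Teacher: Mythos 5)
The paper offers no proof of this theorem: it is quoted verbatim from [CFX]. Your argument therefore has to stand on its own, and its first half does. The corner evaluations $P_{k,m}^{a,b}(\one)=P_{m-k}^{(b+k,a)}(-1)\,P_k^{(b-\f12,b-\f12)}(1)$, the ratio $c_{b+k,a}/c_{b,a}=(a+b+2)_k/(b+1)_k$, the collapse of the inner $j$-sum by the confluent Christoffel--Darboux identity (your kernel-polynomial justification of it is sound) together with its reflection at $-1$, and the Pochhammer telescoping all check out; I verified that the resulting single-sum expression agrees with $\Pb_0+\Pb_1$ and with the right-hand side of \eqref{eq:Kn(1,x)} for $n=0,1$. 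This reduction, and the observation that $\Kb_n(U_{a,b};\one,\cdot)$ lies in $\CV_n(\UU,U_{a+1,b})$, are correct and genuinely useful.

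The proof is nevertheless incomplete at its decisive step, and the mechanism you propose for that step fails as described. All of the content of \eqref{eq:Kn(1,x)} --- the emergence of the specific $t$-integral with argument $z(x,t)$ and weight $w_{a+b+1,b}$ --- lies in the resummation over $k$, which you defer to ``a somewhat delicate algebraic computation.'' But your single sum carries the $n$-dependent prefactor $(a+b+2)_n/(b+1)_n$ and the target carries the $n$-dependent factor $P_n^{(a+b+1,b)}(1)/h_n^{(a+b+1,b)}$; once these are accounted for, the series $\sum_n(\cdots)z^n$ on either side is not governed by the elementary algebraic generating function $\sum_n P_n^{(\a,\b)}(s)z^n$ but by Poisson-kernel--type sums $\sum_n P_n^{(\a,\b)}(1)P_n^{(\a,\b)}(s)z^n/h_n^{(\a,\b)}$, for which only Bailey's hypergeometric product formula is available. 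So the claim that ``both generating functions are elementary'' is false, and the coefficient matching you rely on is not a computation you have exhibited or shown to close. Your alternative route has the same status: the Gegenbauer coefficient of $v\mapsto P_n^{(a+b+1,b)}(z)$ is exactly the identity to be proved. A route that does close is suggested by your own rewriting $z(x,t)=2\,|(1-\tau)e_1+\tau(x_1,\sqrt{x_2-x_1^2})|^2-1$: the quadratic substitution $(x_1,x_2)\mapsto(x_1,\sqrt{x_2-x_1^2})$ carries $\UU$ onto the upper half of $\BB^2$ and $U_{a,b}(x)\,\d x$ onto $2(1-\|y\|^2)^a|y_2|^{2b}\d y$, with $\one$ going to $(1,0)$, where the known closed-form reproducing kernel for this reflection-invariant ball weight degenerates to a single one-dimensional integral; undoing the quadratic map then yields \eqref{eq:Kn(1,x)}. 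That argument, or a direct appeal to [CFX], is what is still needed.
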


In particular, let us denote by $k_n^\delta(w_{\a,\b}; \cdot,\cdot)$ the kernels of the Ces\`aro means of the Jacobi 
polynomials that are given by 
$$
  k_n^\delta (w_{\a,\b}; s,t) = \frac{1}{\binom{n+\delta}{n}} \sum_{k=0}^n \binom{n-k+\delta}{n-k} 
        \frac{P_k^{(\a,\b)}(s)P_k^{(\a,\b)}(t)}{h_k^{(\a,\b)}},
$$
which are the kernels of the Ces\`aro $(C,\delta)$ means of the Fourier-Jacobi series. Then \eqref{eq:Kn(1,x)}
leads to 
\begin{equation}\label{eq:KnD(1,x)}
\Kb_n^\delta (U_{a,b};\one, x) = \frac{\delta}{n+\delta} c_{\a+b+1,b}\int_{-1}^1 k_n^{\delta-1}\big(w_{a+b+1,b}; 1, z(x,t)\big)
   w_{a+b+1,b}(t)\ \d t. 
\end{equation}

The identity \eqref{eq:KnD(1,x)} allows us to bound the $L^1$ norm of $\Kb_n^\delta (U_{a,b};\one, x)$ and,
as a result, obtain the convergence of  the $(C,\delta)$ means $S_n^\delta(U_{a,b}; f)$. The result is the
following theorem established in \cite{CFX}. 
 
\begin{thm}\label{thm:CesaroU}
Let $a \ge b \ge 0$. Then the Ces\`aro means of the Fourier orthogonal series with
respect to $U_{a,b}$ satisfy 
\begin{enumerate}[ 1.]
\item if $\delta \ge a + 2 b + 4$, then $S_n^\delta (U_{a,b}; f)$ is nonnegative if $f$ is nonnegative; 
\item if $\delta > a+ b+ \f32$, then $S_n^\delta (U_{a,b}; f)$ converge to $f$ in $L^p(\UU;U_{a,b})$, $1 \le p < \infty$,
and in $C(\UU)$.   
\end{enumerate}
\end{thm}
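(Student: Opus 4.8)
The plan is to reduce both parts, via the generalized translation $\CT_x$, to properties of the single kernel $\Kb_n^\delta(U_{a,b};\one,\cdot)$, and then to exploit the closed formula \eqref{eq:KnD(1,x)}, which converts those properties into known facts about Ces\`aro means of one-dimensional Fourier-Jacobi series. Writing $\a=a+b+1$ and $\b=b$ (so $\a\ge\b\ge 0$), a one-line cancellation of binomial coefficients shows that $\tfrac{\delta}{n+\delta}\,k_n^{\delta-1}(w_{\a,\b};\cdot,\cdot)$ is the ordinary $(C,\delta)$ Ces\`aro kernel
\[
  \s_n^\delta(w_{\a,\b};s,t)=\frac{1}{\binom{n+\delta}{n}}\sum_{k=0}^n\binom{n-k+\delta-1}{n-k}\,\frac{P_k^{(\a,\b)}(s)P_k^{(\a,\b)}(t)}{h_k^{(\a,\b)}}
\]
of the Fourier-Jacobi series (in the usual normalization, built from the orthogonal projections), so that \eqref{eq:KnD(1,x)} reads
\begin{equation}\label{eq:plan-rewrite}
  \Kb_n^\delta(U_{a,b};\one,x)=c_{\a,\b}\int_{-1}^1\s_n^\delta\big(w_{\a,\b};1,z(x,t)\big)\,w_{\a,\b}(t)\,\d t,\qquad x\in\UU,
\end{equation}
with $z(x,t)=1-(1-t^2)(1-x_1)-\tfrac12(1-t)^2(1-x_2)$. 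One elementary fact will be used in both parts: $z(x,t)\in[-1,1]$ for $x\in\UU$, $t\in[-1,1]$. Indeed $1-z(x,t)=(1-t^2)(1-x_1)+\tfrac12(1-t)^2(1-x_2)\ge 0$; and, using $1-x_2\le(1-x_1)(1+x_1)$ (which is $x_2\ge x_1^2$) and putting $v=(1-x_1)(1-t)\in[0,4]$, one gets $1-z(x,t)\le 2v-\tfrac12v^2=2-\tfrac12(v-2)^2\le 2$.

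For part (1) I would first observe that $\d m_{a,b}$ is a probability measure with nonnegative density --- here $a\ge b$ and $b\ge 0$ are used --- and that $\xi(x,y;r,\psi)\in\UU$, so $\CT_x$ is positivity-preserving; then, since $\Kb_n^\delta(U_{a,b};x,y)=\CT_x\Kb_n^\delta(U_{a,b};\one,\cdot)(y)$ and $S_n^\delta(U_{a,b};f,x)=\db_{a,b}\int_\UU f(y)\Kb_n^\delta(U_{a,b};x,y)U_{a,b}(y)\,\d y$, it is enough to show $\Kb_n^\delta(U_{a,b};\one,\cdot)\ge 0$ on $\UU$. By \eqref{eq:plan-rewrite} and $z(x,t)\in[-1,1]$ this is immediate once $\s_n^\delta(w_{\a,\b};1,\cdot)\ge 0$ on $[-1,1]$ for all $n$, which is Gasper's positivity theorem for the Ces\`aro kernels of the Fourier-Jacobi series at the endpoint; for the parameters here ($\a\ge\b\ge 0$) it applies as soon as $\delta\ge\a+\b+3$, i.e.\ $\delta\ge a+2b+4$.

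For part (2), \eqref{eq:convoluSn} and \eqref{eq:convoluU} give $\|S_n^\delta(U_{a,b};f)\|_{L^p(\UU;U_{a,b})}\le\|f\|_{L^p(\UU;U_{a,b})}\,\|\Kb_n^\delta(U_{a,b};\one,\cdot)\|_{L^1(\UU;U_{a,b})}$, $1\le p\le\infty$; and since $(C,\delta)$ is regular we have $S_n^\delta(U_{a,b};P)\to P$ for every polynomial $P$, with polynomials dense in $L^p(\UU;U_{a,b})$ ($1\le p<\infty$) and in $C(\UU)$. A routine density and uniform-boundedness argument then reduces the convergence to the bound $\sup_n\|\Kb_n^\delta(U_{a,b};\one,\cdot)\|_{L^1(\UU;U_{a,b})}<\infty$. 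To establish it, I would put absolute values in \eqref{eq:plan-rewrite}, integrate against $\db_{a,b}U_{a,b}$ over $\UU$, and use Fubini; this turns the estimate into the one-dimensional question of bounding $\int_{-1}^1|\s_n^\delta(w_{\a,\b};1,z)|\,\d\nu(z)$, where $\nu$ is the push-forward of $\db_{a,b}U_{a,b}(x)\,\d x\otimes w_{\a,\b}(t)\,\d t$ under $(x,t)\mapsto z(x,t)$ --- a measure on $[-1,1]$ that can be written down explicitly from \eqref{eq:Kn(1,x)}. Feeding this into the sharp pointwise and Lebesgue-constant estimates for Fourier-Jacobi Ces\`aro kernels at the endpoint $1$ yields $\sup_n\|\Kb_n^\delta(U_{a,b};\one,\cdot)\|_{L^1(\UU;U_{a,b})}<\infty$ exactly when $\delta>a+b+\tfrac32$.

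The routine ingredients are the two reductions to the point $\one$, the density/uniform-boundedness argument, and the inequality $z(x,t)\in[-1,1]$. The real work is in the one-dimensional inputs: Gasper's endpoint positivity for part (1), and, for part (2), the explicit description of the push-forward measure $\nu$ together with the sharp $L^1$ (Lebesgue-constant) bounds for Fourier-Jacobi Ces\`aro kernels at the endpoint. The hard part will be the analysis of $\nu$ near $z=1$ --- where $\s_n^\delta(w_{\a,\b};1,\cdot)$ is most concentrated --- and the bookkeeping that makes the one-dimensional critical indices come out to precisely $a+b+\tfrac32$ for convergence and $a+2b+4$ for positivity.
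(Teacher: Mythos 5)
Your proposal is correct and follows essentially the same route as the paper, which does not reprove this theorem but quotes it from \cite{CFX}: the reduction to the point $\one$ via the translation/convolution structure \eqref{eq:CesaroUat1} and \eqref{eq:convoluSn}, followed by feeding the closed formula \eqref{eq:KnD(1,x)} into Gasper's endpoint positivity and the one-dimensional endpoint $L^1$ estimates for Jacobi Ces\`aro kernels, is precisely the argument outlined in Section 2 and carried out in \cite{CFX}. One small terminological quibble: your $\s_n^\delta$ equals $\tfrac{\delta}{n+\delta}k_n^{\delta-1}$, i.e.\ a rescaled $(C,\delta-1)$ kernel rather than the ordinary $(C,\delta)$ kernel, but you apply Gasper's theorem with the correctly shifted index ($\delta-1\ge \a+\b+2$), so both thresholds come out right.
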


\begin{rem} \label{rem:CesaroU}
The proof in \cite{CFX} shows the boundedness of the $L^1$ norm of $\Kb_n^\delta (U_{a,b};\one, x)$
for $a > -1$ and $b > -\f12$, so that the convergence of $S_n^\delta (U_{a,b}; f)$ at $x = \one$ holds
for $\delta > a+ b+\f32$ without the restriction $a \ge b \ge 0$. The latter condition is imposed because 
of the product formula. 
\end{rem}

The method that we outlined above also applies to other summability methods of the Fourier orthogonal series 
on $\UU$. In particular, it shows that the convergence can often be reduced to that at the point $x = \one$. 

\section{Orthogonality and Fourier orthogonal series on the surface of paraboloid}
\setcounter{equation}{0}

We consider orthogonal structure on the surface of the paraboloid of revolution
$$
\VV_0^{d+1}: =\left \{(x,t): \|x\|^2 = t, \,\, 0 \le t \le 1, \,\, x \in \RR^d \right \},
$$ 
which is compact since its $t$ direction is bounded by $0\le t \le 1$. For $x = \sqrt{t} \xi$, $\xi \in \sph$, we 
define the measure $\d \s(x,t)$ on the surface $\VV_0^{d+1}$ by $\d \s(x,t) = t^{\f{d-1}{2}} \d \s_\SS(\xi) \d t$, 
where $\d \s_\SS$ denote the surface measure of the unit sphere $\sph$. For $d\ge 2$, $\b >  - \f{d+1}{2}$ 
and $\g > -1$, we define an inner product on the surface $\VV_0^{d+1}$
$$
  \la f,g\ra_{\b,\g} = \sb_{\b,\g} \int_{\VV_0^{d+1}} f(x,t) g(x,t) \varpi_{\b,\g}(t) \d\s(x,t),
$$
where the weight function is defined by
$$
 \varpi_{\b,\g}(t) := t^\b (1-t)^\g,  \qquad 0 \le t \le 1,   
$$
and $\sb_{\b,\g}$ is the normalization constant given by 
$$
    \sb_{\b,\g} = \frac{1}{\o_d} \frac{1}{\int_0^1 t^{\f{d-1}{2}} \varpi_{\b,\g}(t) dt} =  \frac{1}{\o_d} c_{\b+\f{d-1}{2},\g}, 
$$
where the constant $c_{a,b}$ is defined in \eqref{eq:JacobiNorm} and $\o_d = 2 \pi^{\f{d}{2}}/\Gamma(\f{d}{2})$ 
is the surface area of $\sph$. The value of $\sb_{\b,\g}$ can be verified by the decomposition of the integral on 
the surface of the paraboloid
$$
  \int_{\VV_0^{d+1} } f(x,t) \d \s(x,t)  = \int_0^1 \int_{\|x\|^2 = t} f(x,t) \d\s(x,t) = 
     \int_0^1 t^{\frac{d-1}{2}} \int_{\sph} f(\sqrt{t}\xi, t) \d\s_{\SS}(\xi)\d t.
$$

\subsection{Spherical harmonics}
In order to understand the orthogonal structure on the surface of the paraboloid, we first review orthogonal
polynomials on the unit sphere which are spherical harmonics. A harmonic polynomial of degree $n$ is a 
homogeneous polynomial of degree $n$ that satisfies $\Delta Y =0$, where $\Delta$ is the Laplace operator. 
Its restriction on the unit sphere is called a spherical harmonic. Let $\CH_n^d$ denote the space of spherical 
harmonics of degree $n$ in $d$ variables. It is known that 
\begin{equation*} 
 \dim \CH_n^d =\binom{n+d-1}{n} - \binom{n+d-3}{n-2}. 
\end{equation*}   
Spherical harmonics of different degrees are orthogonal on the sphere. For $n \in \NN_0$ let
$\{Y_\ell^n: 1 \le \ell \le \dim \CH_n^d\}$ be an orthonormal basis of $\CH_n^d$ in this subsection; then 
$$
   \frac{1}{\o_d} \int_\sph Y_\ell^n (\xi) Y_{\ell'}^m (\xi)\d\s_\SS(\xi) = \delta_{\ell,\ell'} \delta_{m,n}.
$$
A fundamental property of the spherical harmonics is that they are eigenfunctions of the Laplace-Beltrami 
operator $\Delta_0$ \cite[(1.4.9)]{DaiX}, 
\begin{equation} \label{eq:sph-harmonics}
     \Delta_0 Y = -n(n+d-2) Y, \qquad Y \in \CH_n^d,
\end{equation}
where $\Delta_0$ is the restriction of $\Delta$ on the unit sphere. Another important property is that
they satisfy an addition formula \cite[(1.2.3) and (1.2.7)]{DaiX}: for $\xi \in\sph$ and $\eta \in \sph$, 
\begin{equation} \label{eq:additionF}
   \sum_{\ell =1}^{\dim \CH_n^d} Y_\ell^n (\xi) Y_\ell^n(\eta) = Z_n^{\f{d-2}{2}} (\la \xi,\eta\ra), \qquad 
    Z_n^\l(t) = \frac{n+\l}{\l} C_n^\l(t),
\end{equation}
where $C_n^\l$ is the Gegenbauer polynomial of degree $n$ that satisfies 
$$
   c_\l \int_{-1}^1 C_n^\l(t) C_m^\l (t) (1-t^2)^{\l-\f12}\d t = \frac{\l}{n+\l} C_n^\l(1) \delta_{m,n},
$$ 
where $C_n^\l(1) = (2\l)_n /n!$ and $c_\l$ is the constant determined by
\begin{equation} \label{eq:c-lam}
   c_\l = \Big(\int_{-1}^1 (1-t^2)^{\l-\f12} \d t \Big)^{-1} = \frac{\Gamma(\l+1)}{\Gamma(\f12)\Gamma(\l+\f12)}.
\end{equation}
The left-hand side of \eqref{eq:additionF} is the reproducing kernel of $\CH_n^d$ and the kernel of 
the projection operator $\proj_n: L^2(\sph) \to \CH_n^d$: 
$$
   \proj_n f(\xi)  = \frac{1}{\o_d}\int_{\sph} f(y) \sP_n(\xi,\eta) \d\s_\SS(\eta), \qquad   
       \sP_n (\xi,\eta) =  \sum_{\ell =1}^{\dim \CH_n^d} Y_\ell^n (\xi) Y_\ell^n(\eta).
$$
Thus, the product formula shows that $\proj_n f$, hence the Fourier series on the sphere defined by
$$
   L^2(\sph) = \bigoplus_{n=0}^\infty \CH_n^d\, : \qquad f = \sum_{n=0}^\infty \proj_n f,
$$ 
has a one-dimensional structure that can be used to reduce a large portion of the Fourier analysis on 
the sphere to that of the Fourier Gegenbauer series (e.g. \cite{DaiX}). 
 
 \subsection{Orthogonal structure on the surface of the paraboloid}
For $n =0,1,2,\ldots,$ let $\CV_n(\VV_0^{d+1}, \varpi_{\b,\g})$ be the space of orthogonal polynomials of 
degree $n$ with respect to the inner product $ \la \cdot, \cdot\ra_{\b,\g}$ on the surface $\VV_0^{d+1}$. 
If $Q \in \CV_n(\VV_0^{d+1}, \varpi_{\b,\g})$, then $Q(x,t)$ is a polynomial of total degree $n$ in $(x,t)$ 
variables restricted on the surface of paraboloid. Since $\VV_0^{d+1}$ is a quadratic surface, 
the dimension of $\CV_n(\VV_0^{d+1}, \varpi_{\b,\g})$ is equal to that of $\CH_n^{d+1}$ as 
established in \cite[Cor. 4.2]{OX1}; that is,
$$
  \dim \CV_n(\VV_0^{d+1}, \varpi_{\b,\g}) = \binom{n+d}{n} - \binom{n+d-2}{n-2}.
$$
An orthogonal basis of $\CV_n(\VV_0^{d+1}, \varpi_{\b,\g})$ can be given in terms of the Jacobi polynomials 
and spherical harmonics  \cite{OX1}. We also need norms of the elements in this basis. 

\begin{prop}
Let $\b > - \f{d+1}{2}$ and $\g > -1$. Let $\{Y_\ell^m: 1 \le \ell \le \dim \CH_m^d\}$ be an orthonormal
basis of $\CH_m^d$. For $0 \le m \le n$, define  
\begin{equation} \label{eq:OPbasis}
   \sQ_{m,\ell}^n(x,t) = P_{n-m}^{(\b+m +\frac{d-1}{2},\g)}(1-2 t) t^{\f{m}{2}}Y_\ell^m\left(\f{x}{\sqrt{t}}\right). 
\end{equation}
Then $\{\sQ_{m,\ell}^n: 0\le m \le n, \, 1 \le \ell \le \dim \CH_m^d\}$ is an orthogonal basis of 
$\CV_n(\VV_0^{d+1}, \varpi_{\b,\g})$.  Moreover, the norm square of $\sQ_{m,\ell}^n$ is given by 
\begin{equation} \label{eq:OPbasisNorm}
   \sh_{m,n}^{\b,\g} = \la \sQ_{m,\ell}^n,\sQ_{m,\ell}^n \ra_{\b,\g}  =
      \frac{c_{\b+\f{d-1}{2},\g}}{c_{m+\b+\f{d-1}{2},\g} } h_{n-m}^{(m+\b+\f{d-1}{2},\g)}.
\end{equation}
\end{prop}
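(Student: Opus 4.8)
The plan is to derive both assertions from one direct computation of $\la \sQ_{m,\ell}^n, \sQ_{m',\ell'}^{n'}\ra_{\b,\g}$, and then to conclude the basis property by a dimension count. There is no substantial obstacle here: the statement is essentially a repackaging of the classical orthogonality of Jacobi polynomials and of spherical harmonics, and the only place requiring care is the bookkeeping of normalization constants in the second step.

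\textbf{Step 1 (the $\sQ_{m,\ell}^n$ are polynomials of degree $n$ on $\VV_0^{d+1}$).} Since $Y_\ell^m$ is homogeneous of degree $m$, on the surface $\|x\|^2=t$ we have $t^{m/2}Y_\ell^m(x/\sqrt t)=Y_\ell^m(x)$, so $\sQ_{m,\ell}^n$ is the restriction to $\VV_0^{d+1}$ of $P_{n-m}^{(\b+m+\frac{d-1}{2},\g)}(1-2t)\,Y_\ell^m(x)$, a product of a polynomial of degree $n-m$ in $t$ and a homogeneous polynomial of degree $m$ in $x$; its total degree is exactly $n$.

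\textbf{Step 2 (the inner product and the norm).} I would insert $x=\sqrt t\,\xi$, $\xi\in\sph$, into the decomposition of the surface integral recorded just before the statement. The spherical factors become $Y_\ell^m(\xi)$ and $Y_{\ell'}^{m'}(\xi)$, so the integral factors as $\int_{\sph}Y_\ell^m Y_{\ell'}^{m'}\,\d\s_\SS = \o_d\,\delta_{m,m'}\delta_{\ell,\ell'}$ times, when $m=m'$, the one‑dimensional integral $\int_0^1 P_{n-m}^{(a_m,\g)}(1-2t)\,P_{n'-m}^{(a_m,\g)}(1-2t)\,t^{a_m}(1-t)^\g\,\d t$, where $a_m:=\b+m+\tfrac{d-1}{2}$ and I used $t^{\frac{d-1}{2}}t^{m}\varpi_{\b,\g}(t)=t^{a_m}(1-t)^\g$. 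The substitution $s=1-2t$ turns this into the Jacobi orthogonality integral for $w_{a_m,\g}$, yielding $\delta_{n,n'}$ and an explicit constant; collecting the powers of $2$ coming from $c'_{\a,\b}=2^{-\a-\b-1}c_{\a,\b}$ and using $\sb_{\b,\g}\,\o_d=c_{\b+\frac{d-1}{2},\g}$ turns that constant into precisely $\sh_{m,n}^{\b,\g}$ of \eqref{eq:OPbasisNorm}. The parameter shift $\b\mapsto\b+m+\tfrac{d-1}{2}$ and the normalization factors are the only slightly delicate point, and it is routine.

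\textbf{Step 3 (orthogonal basis).} Step 2 shows $\la\sQ_{m,\ell}^n,\sQ_{m',\ell'}^{n'}\ra_{\b,\g}=0$ unless $(n,m,\ell)=(n',m',\ell')$, with $\sh_{m,n}^{\b,\g}>0$, so the whole family $\{\sQ_{m,\ell}^k:k\ge 0\}$ is linearly independent on $\VV_0^{d+1}$. A telescoping (hockey‑stick) computation gives $\sum_{m=0}^n\dim\CH_m^d=\binom{n+d}{n}-\binom{n+d-2}{n-2}=\dim\CV_n(\VV_0^{d+1},\varpi_{\b,\g})$, so there are exactly $\dim\CV_n$ polynomials $\sQ_{m,\ell}^n$, and $\#\{\sQ_{m,\ell}^k:0\le k\le n\}=\sum_{k=0}^n\dim\CV_k=\dim\Pi_n$, where $\Pi_n$ denotes the space of polynomials of degree $\le n$ restricted to $\VV_0^{d+1}$. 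Being $\dim\Pi_n$ linearly independent elements of $\Pi_n$, they form a basis of $\Pi_n$; in particular $\Pi_{n-1}$ is spanned by the $\sQ_{m',\ell'}^k$ with $k\le n-1$. Since each $\sQ_{m,\ell}^n$ is orthogonal to all of those, it is orthogonal to $\Pi_{n-1}$, hence lies in $\CV_n(\VV_0^{d+1},\varpi_{\b,\g})$; and $\dim\CV_n$ mutually orthogonal such elements form an orthogonal basis, which completes the proof.
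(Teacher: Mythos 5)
Your proof is correct and follows essentially the same route as the paper: verify the degree via homogeneity of $Y_\ell^m$, factor the inner product into a spherical-harmonic integral times a Jacobi integral, and conclude by the dimension count $\sum_{m=0}^n\dim\CH_m^d=\dim\CV_n(\VV_0^{d+1},\varpi_{\b,\g})$. Your Step 3 merely spells out in more detail the "orthogonality plus counting implies basis" argument that the paper leaves implicit.
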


\begin{proof}
A simple combinatorial identity shows that the cardinality of $\{(m,\ell): 0\le m \le n, \, 1 \le \ell \le \dim \CH_m^d\}$ 
is equal to $\dim \CV_n(\VV_0^{d+1}, \varpi_{\b,\g})$, so we only need to verify the orthogonality.
Since $Y_\ell^m$ is homogeneous of degree $m$, the polynomial $\sQ_{m,\ell}^n$ is of 
total degree $n$ in $(x,t)$ variables.  
Let $\a = \b+\f{d-1}{2}$. Setting $\xi = x/\sqrt{t}$, we obtain 
\begin{align*}
    \sb_{\b,\g} & \int_{\VV_0^{d+1} }  \sQ_{m,\ell}^n(x,t)  \sQ_{m',\ell'}^{n'}(x,t) \varpi_{\b,\g}(t) \d \s(x,t)  
     = \frac{1}{\o_d} \int_{\sph} Y_\ell^m(\xi) Y_{\ell'}^{m'} (\xi) \d\s_\SS(\xi) \\ 
  &  \times c_{\a,\g}\int_0^1 P_{n-m}^{(m+\a,\g)}(1-2 t)P_{n'-m'}^{(m'+\a,\g)}(1-2 t) t^{\frac{m+m'}{2}+\a} (1-t)^\g \d t.
\end{align*}
Since $Y_\ell^m$ are orthonormal, the orthogonality of $\sQ_{m,\ell}^n$ follows from that of the Jacobi polynomials
in the identity, and so is the formula for $\sh_{m,n}^{\b,\g}$. 
\end{proof}

In analogue of spherical harmonics, we have shown in \cite{X19, X20} that on the surface of the cone and the 
hyperboloid, orthogonal polynomials of degree $n$ or those that are even in $t$ variable of degree $n$ are 
eigenfunctions of a second order differential operator with eigenvalues depending only on $n$. This important 
property, however, does not hold for the paraboloid. The best we can do is the following proposition.

\begin{prop}\label{prp:pde}
Let $\b = -\f12$ and $\g > -1$. Then $\sQ_{m,\ell}^n$ in \eqref{eq:OPbasis} satisfies the differential equation
\begin{align}\label{eq:diff-eqn}
 & \left[   t (1-t) \partial_t^2 +\left(\tfrac{d}{2} - (\g + \tfrac{d}2 + 1)t \right) \partial_t + \frac{1-t}{4 t} \Delta_0^{(\xi)}
    \right] y \\
 &\qquad \qquad \qquad \qquad \qquad = -  \left(n \big( n+\g + \tfrac{d}{2}\big) - m \big(n + \tfrac{\g + d-1}{2}\big) \right)y,  
 \notag
\end{align}
where $\Delta_0^{(\xi)}$ is the Laplace-Beltrami operator acting on $\xi = x/\sqrt{t} \in \sph$.  
\end{prop}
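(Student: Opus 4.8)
The approach is separation of variables on the surface. The plan is to work in the coordinates $(t,\xi)$ with $t\in[0,1]$ and $\xi=x/\sqrt t\in\sph$, in which the basis element \eqref{eq:OPbasis} is the product
$\sQ_{m,\ell}^n = f_{m,n}(t)\,Y_\ell^m(\xi)$, where $f_{m,n}(t):=t^{m/2}P_{n-m}^{(m+\a,\g)}(1-2t)$ and $\a:=\b+\tfrac{d-1}{2}$. In these coordinates $\partial_t$ acts only on $f_{m,n}$ and $\Delta_0^{(\xi)}$ only on $Y_\ell^m$; since $Y_\ell^m\in\CH_m^d$, \eqref{eq:sph-harmonics} gives $\Delta_0^{(\xi)}Y_\ell^m=-m(m+d-2)Y_\ell^m$. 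Hence applying the operator on the left of \eqref{eq:diff-eqn} to $\sQ_{m,\ell}^n$ produces $Y_\ell^m(\xi)$ times
\[
\mathcal{L}_0 f_{m,n}:=t(1-t)f_{m,n}''+\Big(\tfrac d2-\big(\g+\tfrac d2+1\big)t\Big)f_{m,n}'-\frac{m(m+d-2)(1-t)}{4t}f_{m,n},
\]
so the proposition will follow once we show $\mathcal{L}_0 f_{m,n}=-\big(n(n+\g+\tfrac d2)-m(n+\tfrac{\g+d-1}{2})\big)f_{m,n}$, an identity of functions of the single variable $t$.

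To prove this one-variable identity I would set $f_{m,n}=t^{m/2}h$ with $h(t):=P_{n-m}^{(m+\a,\g)}(1-2t)$ and reduce everything to the Jacobi equation for $h$. The change of variable $s=1-2t$ (so $1-s^2=4t(1-t)$) turns the Jacobi differential equation for $P_{n-m}^{(m+\a,\g)}$ into
\[
t(1-t)h''+\big[(m+\a+1)-(m+\a+\g+2)t\big]h'+(n-m)(n+\a+\g+1)h=0 .
\]
Inserting $f_{m,n}=t^{m/2}h$ into $\mathcal{L}_0$ and dividing by $t^{m/2}$, the coefficient of $h''$ is again $t(1-t)$, the coefficient of $h'$ is $(m+\tfrac d2)-(m+\g+\tfrac d2+1)t$, and — after the $1/t$ terms produced by differentiating the weight $t^{m/2}$ cancel against the $-\tfrac{m(m+d-2)(1-t)}{4t}$ term — the coefficient of $h$ collapses to the constant $-\tfrac{m(\g+1)}{2}$. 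Using the displayed Jacobi equation to eliminate $t(1-t)h''$ then leaves $\mathcal{L}_0 f_{m,n}/t^{m/2}=(\tfrac d2-\a-1)(1-t)h'-\big[(n-m)(n+\a+\g+1)+\tfrac{m(\g+1)}{2}\big]h$; the first-order term vanishes precisely when $\a=\tfrac{d-2}{2}$, i.e. when $\b=-\tfrac12$, which is where the hypothesis enters. With that choice $n+\a+\g+1=n+\g+\tfrac d2$, so $\mathcal{L}_0 f_{m,n}=-\big[(n-m)(n+\g+\tfrac d2)+\tfrac{m(\g+1)}{2}\big]f_{m,n}$, and a line of elementary algebra verifies $(n-m)(n+\g+\tfrac d2)+\tfrac{m(\g+1)}{2}=n(n+\g+\tfrac d2)-m(n+\tfrac{\g+d-1}{2})$, the asserted eigenvalue.

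The only delicate point is the bookkeeping in the substitution $f_{m,n}=t^{m/2}h$: the weight $t^{m/2}$ generates $\tfrac{m(m-2)}{4t}$ and $\tfrac{m}{2t}$ terms from $f_{m,n}''$ and $f_{m,n}'$, and one must check that, together with the $-\tfrac{m(m+d-2)(1-t)}{4t}$ term and with $\b=-\tfrac12$, these conspire to leave only a constant multiple of $f_{m,n}$ rather than a genuinely $t$-dependent lower-order operator. It is worth emphasizing that, unlike on the sphere, cone, and hyperboloid, the resulting eigenvalue depends on $m$ as well as on $n$, so \eqref{eq:diff-eqn} is a family of eigenrelations attached to the specific orthogonal basis \eqref{eq:OPbasis} rather than a spectral decomposition of a single operator on $\CV_n(\VV_0^{d+1},\varpi_{\b,\g})$; accordingly the argument does not extend to arbitrary elements of $\CV_n(\VV_0^{d+1},\varpi_{\b,\g})$.
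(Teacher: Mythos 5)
Your proposal is correct and follows essentially the same route as the paper: separate variables as $\sQ_{m,\ell}^n=f_{m,n}(t)Y_\ell^m(\xi)$, use $\Delta_0^{(\xi)}Y_\ell^m=-m(m+d-2)Y_\ell^m$ to reduce to a one-variable identity, and derive that identity from the Jacobi differential equation after the substitution $f_{m,n}=t^{m/2}h$ (the cancellation of the $1/t$ terms and the collapse of the zeroth-order coefficient to $-\tfrac{m(\g+1)}{2}$ both check out). The only cosmetic difference is that the paper keeps $\a$ general in the one-variable computation and invokes $\b=-\tfrac12$ only to identify $-m(m+2\a)$ with the spherical eigenvalue, whereas you fix the operator of the statement from the outset and see $\b=-\tfrac12$ forced by the vanishing of the residual first-order term; these are equivalent.
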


\begin{proof}
Let $f_{n,m}(t) = P_{n-m}^{(m+\a,\g)}(1-2t) t^{\f{m}{2}}$ and $\a = \b + \frac{d-1}{2}$, so that 
$\sQ_{m,\ell}^n(x,t) = f_{n,m}(t) Y_\ell^m(\xi)$, where $\xi = x/\sqrt{t}$. Since the Jacobi polynomial 
$P_n^{(\a,\g)}(1-2t)$ satisfies the differential equation
\begin{equation}\label{eq:Jacobi-diff}
 t(1-t)y'' + (1+\a - (\a+\g+2) t) y' + n (n+\a+\g+1) y =0, 
\end{equation}
a straightforward computation shows that $f_{n,m}$ satisfies
\begin{align*}
t (1-t) f_{n,m}''(t) +& \big(1+\a - (2+\a+\g)t\big) f_{n,m}'(t) -m (m  + 2\a) \frac{1-t}{4t} f_{n,m}(t) \\
   & = - \big(n (n+ \a + \g + 1) - \tfrac12 m (2n +  2 \a + \g)\big) f_{n,m}(t).
\end{align*}
For $\b = -\f12$, we have $2\a = d-2$, so that $- m(m+2\a)$ is the eigenvalue of the Laplace-Beltrami 
operator $\Delta_0^{(\xi)}$. Hence, multiplying the above identity by $Y_{\ell}^m(\xi)$ and replacing
$-m (m  + 2\a) Y_\ell^m$ by $\Delta_0^{(\xi)} Y_\ell^m$, we have proved \eqref{eq:diff-eqn}.
\end{proof}

\begin{rem}
The right-hand side of \eqref{eq:diff-eqn} depends on $m$, so that $\sQ_{m,\ell}^n$ is the eigenfunction of
the operator in the left-hand side of \eqref{eq:diff-eqn} with the eigenvalue depends on both $m$ and $n$. 
This is in sharp contrast with the orthogonal structure on the cone and on the hyperboloid, for which the 
eigenvalue depends only on the degree of the orthogonal polynomials. 
\end{rem}

The reproducing kernel of $\CV_n(\VV_0^{d+1}, \varpi_{\b,\g})$ is defined by 
\begin{align*}
  \sP_n\left(\varpi_{\b,\g}; (x,t),(y,s)\right) = \sum_{m=0}^n \sum_{\ell=1}^{\dim\CH_{m}^d}
        \frac{ \sQ_{m,\ell}^n(x,t)  \sQ_{m,\ell}^n(y,s)}{\sh_{m,n}^{\b,\g}},
\end{align*}   
which is the kernel of the orthogonal projection operator $\proj_n(\varpi_{\b,\g})$ from the space
$L^2(\VV_0^{d+1};\varpi_{\b,\g})$ onto $\CV_n(\VV_0^{d+1};\varpi_{\b,\g})$. In contrast to the cone and to
the hyperboloid, this kernel does not satisfy an addition formula that is of a closed formula of one-dimensional in
essence. Instead, however, we can express this kernel in terms of the reproducing kernel 
$\Pb_n(U_{a,b}; \cdot,\cdot)$ on the parabola domain $\UU$. 

\begin{thm}
Let $d \ge 2$ and $\g> -1$. Let $(x,t) = (\sqrt{t} \xi,t) \in \VV_0^{d+1}$ and $(y,s) = (\sqrt{s} \eta, s) \in \VV_0^{d+1}$ 
with $\xi, \eta \in \sph$. Then for $\b > -\f12$,
\begin{align} \label{eq:reprodV1}
  \sP_n & \big(\varpi_{\b,\g}; (x,t),(y,s)\big)  = 
    c_{ \frac{d-2}{2},\b-\f12} c_{\b+\f12}  \\
  & \qquad \times \int_{-1}^1 \int_{-1}^1  \Pb_n \left(U_{\g,\b+\frac{d-1}{2}}; 
          \big(\sqrt{t}, t \big),\big(\sqrt{s}\big ( \tfrac{1-z_1}{2}  \la \xi,\eta \ra + \tfrac{1+z_1}2 z_2\big), s \big)\right) \notag \\
  & \qquad \times (1-z_1)^\frac{d-2}{2} (1+z_1)^{\b-\f12}  (1-z_2^2)^{\b}  \d z_1 \d z_2, \notag
\end{align}
and for $\b = - \f12$, 
\begin{align} \label{eq:reprodV2}
 \sP_n(\varpi_{-\f12,\g}; (x,t),(y,s)) =  
        \Pb_n \left(U_{\g,\frac{d-2}{2}}; \big(\sqrt{t},t\big),\big( \la \xi, y\ra, s \big)\right).
\end{align}
\end{thm}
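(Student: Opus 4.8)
The plan is to compute the reproducing kernel on the paraboloid by inserting the explicit orthogonal basis $\sQ_{m,\ell}^n$ from \eqref{eq:OPbasis}, summing over $\ell$ via the addition formula \eqref{eq:additionF} for spherical harmonics, and then recognizing the resulting sum over $m$ of products of one-variable Jacobi polynomials as the reproducing kernel $\Pb_n(U_{a,b})$ on the parabolic domain, read off from its basis \eqref{eq:U-basis}. Concretely, write $\a = \b + \frac{d-1}{2}$. From the norm formula \eqref{eq:OPbasisNorm} and the definition of $\sP_n$,
\begin{align*}
  \sP_n\big(\varpi_{\b,\g}; (x,t),(y,s)\big) = \sum_{m=0}^n \frac{c_{\a,\g}}{c_{m+\a,\g}}\,
    \frac{P_{n-m}^{(m+\a,\g)}(1-2t)\,P_{n-m}^{(m+\a,\g)}(1-2s)\,(ts)^{m/2}}{h_{n-m}^{(m+\a,\g)}}
    \sum_{\ell=1}^{\dim\CH_m^d} Y_\ell^m(\xi)Y_\ell^m(\eta).
\end{align*}
The inner sum equals $Z_m^{(d-2)/2}(\la\xi,\eta\ra)$ by \eqref{eq:additionF}. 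Comparing with the basis \eqref{eq:U-basis} for $\UU$ with parameters $a=\g$, $b=\a=\b+\frac{d-1}{2}$, one has $P_{m,n}^{\g,\a}(x_1,x_2) = P_{n-m}^{(\a+m,\g)}(1-2x_2)\,x_2^{m/2}\,P_m^{(\a-\f12,\a-\f12)}(x_1/\sqrt{x_2})$, with squared norm $\frac{c_{\a,\g}}{c_{\a+m,\g}} h_{n-m}^{(\a+m,\g)} h_m^{(\a-\f12,\a-\f12)}$ by \eqref{eq:U-basisNorm}. Therefore, evaluating the first argument at $(x_1,x_2)=(\sqrt{t},t)$ so that $x_1/\sqrt{x_2}=1$ and $P_m^{(\a-\f12,\a-\f12)}(1) = \binom{m+\a-\f12}{m}$,
\begin{align*}
  \Pb_n\big(U_{\g,\a}; (\sqrt{t},t), (y_1,y_2)\big) = \sum_{m=0}^n \frac{c_{\a+m,\g}}{c_{\a,\g}}\,
    \frac{P_{n-m}^{(\a+m,\g)}(1-2t)\,P_{n-m}^{(\a+m,\g)}(1-2y_2)\,t^{m/2}y_2^{m/2}}
    {h_{n-m}^{(\a+m,\g)}\,h_m^{(\a-\f12,\a-\f12)}}\,\frac{P_m^{(\a-\f12,\a-\f12)}(y_1/\sqrt{y_2})}{\binom{m+\a-\f12}{m}^{-1}}.
\end{align*}
The two expressions match in the $m$-dependent Jacobi factors; the remaining task is to show that integrating $\Pb_n(U_{\g,\a}; (\sqrt{t},t),\cdot)$ against the stated kernels in $z_1,z_2$, with second argument $\big(\sqrt{s}(\tfrac{1-z_1}{2}\la\xi,\eta\ra + \tfrac{1+z_1}{2}z_2),\, s\big)$, reproduces the Gegenbauer polynomial $Z_m^{(d-2)/2}(\la\xi,\eta\ra)$ in place of the normalized $P_m^{(\a-\f12,\a-\f12)}$.

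The key computational lemma is thus a one-variable integral identity: for a Gegenbauer polynomial of degree $m$ and parameter $\l = \frac{d-2}{2}$,
\begin{align*}
  c_\l C_m^\l(u) = c_{\frac{d-2}{2},\b-\f12}\, c_{\b+\f12}\int_{-1}^1\int_{-1}^1 P_m^{(\b-\f12,\b-\f12)}\!\left(\tfrac{1-z_1}{2}u + \tfrac{1+z_1}{2}z_2\right)
    (1-z_1)^{\frac{d-2}{2}}(1+z_1)^{\b-\f12}(1-z_2^2)^\b\,\d z_1\,\d z_2
\end{align*}
(up to the normalization $\binom{m+\b-\f12}{m}/Z_m^\l(1)$ factors, which one tracks carefully). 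This is precisely the kind of formula linking Gegenbauer and ultraspherical Jacobi polynomials in different dimensions — effectively iterating the classical projection formula $C_m^\l(u) = \text{const}\int_{-1}^1 P_m^{(\mu,\mu)}(\tfrac{1-r}{2}u+\tfrac{1+r}{2}v)\cdots$; I expect it to follow by a two-step argument: first integrate in $z_2$ against $(1-z_2^2)^\b$ to pass from $P_m^{(\b-\f12,\b-\f12)}$ at the convex-combination argument to (a multiple of) $C_m^{\b+\f12}$ of the argument $\tfrac{1-z_1}{2}u + \text{const}$ restricted appropriately — using that $P_m^{(\b-\f12,\b-\f12)}$ is a rescaled $C_m^{\b+\f12}$ — and then integrate in $z_1$ against $(1-z_1)^{(d-2)/2}(1+z_1)^{\b-\f12}$, which is the Feldheim–Vilenkin type formula raising the Gegenbauer index from $\b+\f12$ to $\b+\f12 + \frac{d-2}{2} = \frac{d-1}{2} + \b - \f12$... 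The bookkeeping of indices is where care is needed; the exponent $\frac{d-2}{2}$ on $(1-z_1)$ and $\b-\f12$ on $(1+z_1)$ combine through the Jacobi-to-Gegenbauer projection so that the final index is $\l = \frac{d-2}{2}$ as required for \eqref{eq:additionF}. The degenerate case $\b = -\f12$ in \eqref{eq:reprodV2} is then immediate: the $z_2$-integral collapses (the weight $(1-z_2^2)^{-1/2}$ against $P_m^{(-1,-1)}$... rather, one checks directly that at $\b=-\f12$ the double integral degenerates and $\la\xi,\eta\ra$ is replaced by $\la\xi, y\ra/\sqrt{s}\cdot\sqrt{s}$, i.e. the second $\UU$-argument becomes $(\la\xi,y\ra, s)$), matching $\Pb_n(U_{\g,(d-2)/2})$ termwise with $a = \g$, $b = \frac{d-2}{2}$.

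The main obstacle will be establishing the double-integral Gegenbauer identity with the precisely correct normalization constants $c_{\frac{d-2}{2},\b-\f12}$ and $c_{\b+\f12}$; everything else is a matching of series coefficients. I would handle this by verifying the identity on each degree $m$ separately — it suffices to check it for $u$ a free variable, both sides being polynomials of degree $m$ in $u$ — using the product/projection formula for Gegenbauer polynomials (e.g. the formula expressing $C_m^\mu$ in terms of $C_m^\nu$ with $\mu > \nu$ via a one-dimensional integral, iterated twice) and pinning down the constants by setting $u = 1$, where all Gegenbauer and Jacobi polynomials take known values $C_m^\l(1) = (2\l)_m/m!$ and the beta-integrals evaluate in closed form via \eqref{eq:c-lam} and \eqref{eq:JacobiNorm}. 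Once the $m$-th terms agree, summing over $0 \le m \le n$ and invoking \eqref{eq:additionF} completes the proof of \eqref{eq:reprodV1}, and \eqref{eq:reprodV2} follows by specialization.
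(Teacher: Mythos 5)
Your overall route is the same as the paper's: expand $\sP_n$ in the basis \eqref{eq:OPbasis}, collapse the sum over $\ell$ by the addition formula \eqref{eq:additionF}, evaluate $\Pb_n(U_{\g,\a})$ at the boundary point $(\sqrt{t},t)$ so that the factor $P_m^{(\a-\f12,\a-\f12)}$ becomes the zonal polynomial $Z_m^{\a}$ as in \eqref{eq:PnUab}, match the Jacobi factors termwise, and reduce the case $\b>-\f12$ to a single double-integral identity that raises the ultraspherical index from $\f{d-2}{2}$ to $\a=\b+\f{d-1}{2}$. The case $\b=-\f12$ is handled in the paper exactly as you indicate, by direct identification with $y_1=\sqrt{s}\,\la\xi,\eta\ra=\la\xi,y\ra$.

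The substantive problem is the statement of your key computational lemma. For the sum over $m$ to be recognized as $\Pb_n\big(U_{\g,\a};(\sqrt t,t),\cdot\big)$ via \eqref{eq:PnUab}, the polynomial under the double integral must be $Z_m^{\a}$, i.e.\ the normalized $P_m^{(\a-\f12,\a-\f12)}$ with $\a=\b+\f{d-1}{2}$; you wrote $P_m^{(\b-\f12,\b-\f12)}$, the ultraspherical polynomial of index $\b$, which is not what the kernel \eqref{eq:PnUab} contains, and your index bookkeeping ($\b+\f12+\f{d-2}{2}=\f{d-1}{2}+\b-\f12$) is off by $\f12$. The identity actually needed is precisely \eqref{eq:ZtoZ} with $\l=\f{d-2}{2}$ and $\s=\b+\f12$, which the paper quotes from \cite{X15} rather than proving; your plan to derive it by iterating one-variable index-raising (Feldheim--Vilenkin type) formulas and pinning constants at $u=1$ is plausible, but it is left as a sketch, so the argument is incomplete at exactly the step that carries all the content for $\b>-\f12$. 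A minor slip: in your first display the normalization should be $c_{m+\a,\g}/c_{\a,\g}$, the reciprocal of what you wrote, to be consistent with \eqref{eq:OPbasisNorm} and with your own expansion of $\Pb_n$.
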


\begin{proof}
First we note that the right-hand side of \eqref{eq:reprodV1} is well defined. Indeed, since $|\la \xi,\eta\ra|\le 1$
and $-1\le z_1, z_2 \le 1$, we see $|\sqrt{s}\big ( \tfrac{1-z_1}{2}  \la \xi,\eta \ra + \tfrac{1+z_1}2 z_2\big)| 
\le \sqrt{s}$, so that both variables in $\Pb_n( U_{\g,\b+\frac{d-1}{2}})$ are elements of $\UU$. 

We first need to specify the reproducing kernel $\Pb_n(U_{a,b})$ on $\UU$ when one of its variables is 
on the curved boundary of $\UU$. Using the well-known identity \cite[(4.7.1)]{Sz}
$$
  \frac{P_m^{(b-\f12,b-\f12)}(1) P_m^{(b-\f12,b-\f12)}(\rho)} {h_m^{(b-\f12,b-\f12)}}
    = \frac{C_m^b (1) C_m^b(\rho)} {h_m^b} = Z_m^b (\rho),
$$
it follows from \eqref{eq:U-basis} and \eqref{eq:U-basisNorm} that 
\begin{align} \label{eq:PnUab}
 &  \Pb_n \left(U_{a,b}; (\sqrt{x_2},x_2), (y_1,y_2)\right) = \sum_{m=0}^n \frac{P_{m,n}^{a,b}(\sqrt{x_2},x_2)
      P_{m,n}^{a,b}(y_1,y_2)}{h_{m,n}^{a,b}}\\
 &  \qquad     = \sum_{m=0}^n \frac{c_{b+m,a}}{c_{b,a}}
          \frac{P_{n-m}^{(b+m,a)}(1-2x_2)P_{n-m}^{(b+m,a)}(1-2y_2)}{h_{n-m}^{(b+m,a)}} x_2^{\f{m}{2}}y_2^{\f{m}{2}}
          Z_m^b \left(\frac{y_1}{\sqrt{y_2}}\right). \notag
\end{align}
In terms of the orthogonal basis \eqref{eq:OPbasis} and using the addition formula \eqref{eq:additionF} for the 
spherical harmonics, we obtain, with $\a = \b+\frac{d-1}{2}$, 
\begin{align*}
  \sP_n(\varpi_{\b,\g}; (x,t),(y,s)) = \sum_{m=0}^n  
            \frac{P_{n-m}^{(\a+m,\g)}(1-2 t)P_{n-m}^{(\a+m,\g)}(1-2 s)} {\sh_{m,n}^{\b,\g}}
               t^{\f{m}{2}} s^{\f{m}{2}} Z_m^{\f{d-2}2}(\la \xi,\eta\ra),
\end{align*}   
where $\xi = \frac{x}{\sqrt{t}} \in \sph$ and $\eta=  \frac{y}{\sqrt{s}} \in \sph$. For $\b = -\f12$, the sum in 
the right-hand side can be identified, using \eqref{eq:OPbasisNorm}, with $\Pb_n(U_{\g,\f{d-2}{2}})$ in 
\eqref{eq:PnUab} by setting $x_2 = t$, $y_2 = s$ and $y_1 = \sqrt{s} \la \xi,\eta\ra$, which proves \eqref{eq:reprodV2}. 

For $\b > - \f12$, we need to increase the value of the index in the zonal harmonic by using the following identity, 
proved recently in \cite{X15}, 
\begin{align}\label{eq:ZtoZ}
  Z_m^\l(t) =  c_{\l,\s-1} c_{\s}  \int_{-1}^1 \int_{-1}^1 & Z_m^{\l+\s} \left( \tfrac{1-z_1}{2} t + \tfrac{1+z_1}2 z_2\right)\\
      & \times (1-z_1)^\l (1+z_1)^{\s-1}  (1-z_2^2)^{\s-\f12}  \d z_1 \d z_2 \notag
\end{align}
with $\l = \frac{d-2}{2}$ and $\s = \b+\f12$. This shows that
\begin{align*}
 \sP_n \big(\varpi_{\b,\g};&  (x,t),  (y,s)\big)  = 
    c_{\frac{d-2}{2},\b-\f12} c_{\b+\f12}   \int_{-1}^1 \int_{-1}^1   \sum_{m=0}^n  
            \frac{P_{n-m}^{(\a+m,\g)}(1-2 t)P_{n-m}^{(\a+m,\g)}(1-2 s)} {\sh_{m,n}^{\b,\g}}\\
      & \times  t^{\f{m}{2}} s^{\f{m}{2}} Z_m^{\a}\big( \tfrac{1-z_1}{2} \la \xi,\eta\ra + \tfrac{1+z_1}2 z_2\big)
       (1-z_1)^\frac{d-2}{2} (1+z_1)^{\b-\f12}  (1-z_2^2)^{\b}  \d z_1 \d z_2.
\end{align*}
The sum in the right-hand side can be identified, using \eqref{eq:OPbasisNorm}, with the reproducing kernel 
$\Pb_n(U_{\g, \b+\frac{d-1}{2}})$ in \eqref{eq:PnUab}, which gives \eqref{eq:reprodV1}. 
\end{proof}

The identity \eqref{eq:reprodV2} for $\b = -\f12$ can be regarded as the limit of \eqref{eq:reprodV1} 
as $\b \to -\f12$ by using the limit relation \eqref{eq:limit-int}.

The kernel $\sP_n(\varpi_{\b,\g})$, however, does not satisfy a closed formula in general. In the case that  
$(x,t)$ is on the boundary $t=1$ of the paraboloid $\VV_0^{d+1}$, however, we could derive a closed
form formula for the kernel of the partial sum operator 
$$
\sK_n\big(\varpi_{\b,\g}, (x,t), (y,s)\big) = \sum_{m=0}^n \sP_m\big(\varpi_{\b,\g}; (x,t), (y,s)\big)
$$
on the paraboloid by using the closed formula of $\Kb_n(U_{a,b}; \one, \cdot)$ in Theorem \ref{thm:Kn(1,x)}.
We state the result for $\b = -\f12$ as an example. 

\begin{cor}
Let $d \ge 2$ and $\g > -1$. Then, for $\xi \in \sph$, 
\begin{align*}
\sK_n \big(\varpi_{-\f12,\g};  (\xi,1), (y,s) \big) &=  \frac{P_n^{(\g+ \f{d}2,\f{d-2}{2})}(1)}{h_n^{(\g+ \f{d}2,\f{d-2}{2})}} \\
  & \times c_{\g+ \f{d}2,\f{d-2}{2}} \int_{-1}^1 P_n^{(\g+ \f{d}2,\f{d-2}{2})}\big(z'(\xi, y, v)\big)
     w_{\g+ \f{d}2,\f{d-2}{2}}(v)\ \d v, 
\end{align*}
where, with $y = \sqrt{s} \eta$, 
$$
 z'(\xi,y, v) = z( (\la \xi, \eta\ra, s), v) = 1 - (1-v^2) (1- \la \xi,y\ra) - \frac12 (1-v)^2 (1- s).
$$
\end{cor}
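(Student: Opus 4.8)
The plan is to combine the expression of the paraboloid reproducing kernel in terms of the kernel on the parabolic domain $\UU$, established in \eqref{eq:reprodV2} for $\b=-\f12$, with the closed formula for $\Kb_n(U_{a,b};\one,\cdot)$ from Theorem~\ref{thm:Kn(1,x)}. The key observation that makes everything collapse is that the point $(\xi,1)\in\VV_0^{d+1}$ (with $\xi\in\sph$) corresponds, under the correspondence $(x,t)=(\sqrt{t}\,\xi,t)\mapsto(\sqrt{t},t)$ appearing in \eqref{eq:reprodV2}, to the point $(\sqrt{1},1)=(1,1)=\one$, the corner of $\UU$. Hence the partial-sum kernel on the paraboloid evaluated at $(\xi,1)$ is exactly $\Kb_n$ of the parabolic domain evaluated at $\one$, for which we have the one-dimensional integral representation.

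First I would write, using \eqref{eq:reprodV2} with $a=\g$ and $b=\frac{d-2}{2}$ (so that $\b+\frac{d-1}{2}=\frac{d-2}{2}$ when $\b=-\f12$, and $U_{\g,\frac{d-2}{2}}$ is the relevant weight),
$$
\sP_m\big(\varpi_{-\f12,\g};(\xi,1),(y,s)\big)=\Pb_m\Big(U_{\g,\frac{d-2}{2}};(\sqrt{1},1),\big(\la\xi,y\ra,s\big)\Big)
   =\Pb_m\big(U_{\g,\frac{d-2}{2}};\one,(\la\xi,y\ra,s)\big),
$$
where I have used $(x,t)=(\xi,1)$, so $t=1$ and $\sqrt{t}=1$, together with $y=\sqrt{s}\,\eta$ so that $\sqrt{s}\la\xi,\eta\ra=\la\xi,y\ra$. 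Summing over $m$ from $0$ to $n$ and recalling $\sK_n=\sum_{m=0}^n\sP_m$ and $\Kb_n=\sum_{m=0}^n\Pb_m$, this gives
$$
\sK_n\big(\varpi_{-\f12,\g};(\xi,1),(y,s)\big)=\Kb_n\big(U_{\g,\frac{d-2}{2}};\one,(\la\xi,y\ra,s)\big).
$$
Then I would invoke Theorem~\ref{thm:Kn(1,x)} with $a=\g$, $b=\frac{d-2}{2}$, so that $a+b+1=\g+\frac{d}{2}$ and $b=\frac{d-2}{2}$, and with the point $x=(x_1,x_2)=(\la\xi,\eta\ra,s)$ — note $x_1=\la\xi,\eta\ra$ and $x_2=s$ — to obtain the stated integral over $[-1,1]$ of $P_n^{(\g+\f{d}{2},\f{d-2}{2})}$ against the weight $w_{\g+\f{d}{2},\f{d-2}{2}}$, with argument $z(x,v)=1-(1-v^2)(1-x_1)-\tfrac12(1-v)^2(1-x_2)$. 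Substituting $x_1=\la\xi,\eta\ra$ and $x_2=s$, and writing $\la\xi,y\ra=\sqrt{s}\la\xi,\eta\ra$ only where it simplifies the display, yields exactly $z'(\xi,y,v)=1-(1-v^2)(1-\la\xi,\eta\ra)-\tfrac12(1-v)^2(1-s)$; I would just need to be careful about whether the final expression is written in terms of $\la\xi,\eta\ra$ or $\la\xi,y\ra$ and record that, since $\xi\in\sph$ while $y=\sqrt{s}\,\eta$, we have $\la\xi,y\ra=\sqrt{s}\,\la\xi,\eta\ra$, so the two forms of $z'$ in the statement are consistent once one is careful that the corner correspondence forces $t=1$ and not $s=1$.

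There is essentially no hard analytic step here: the proof is a matter of correctly matching indices ($a\leftrightarrow\g$, $b\leftrightarrow\frac{d-2}{2}$, hence $a+b+1\leftrightarrow\g+\frac{d}{2}$) and correctly identifying the corner $\one$ of $\UU$ with the point $(\xi,1)$ of the paraboloid under the map used in \eqref{eq:reprodV2}. The only place where care is genuinely needed — and the one I expect to be the main (minor) obstacle — is bookkeeping the two roles of the radial variable: in \eqref{eq:reprodV2} the first slot of $\Pb_n$ receives $(\sqrt{t},t)$ and the second receives $(\la\xi,y\ra,s)=(\sqrt{s}\la\xi,\eta\ra,s)$, so evaluating at $t=1$ (not $s=1$) is what produces the corner, and one must then feed $x=(\la\xi,\eta\ra,s)$ into the formula $z(x,v)$ of Theorem~\ref{thm:Kn(1,x)} rather than accidentally using $(\la\xi,y\ra,s)$ in the first argument of $z$. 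For general $\b>-\f12$ the same scheme works starting from \eqref{eq:reprodV1} instead, producing an extra double integral in $z_1,z_2$ against the beta-type weights, but since the corollary is stated only for $\b=-\f12$ I would remark that the general case follows identically from \eqref{eq:reprodV1} and Theorem~\ref{thm:Kn(1,x)} and leave it at that.
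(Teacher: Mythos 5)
Your overall route is exactly the paper's: the author proves this corollary by simply combining \eqref{eq:reprodV2} with Theorem~\ref{thm:Kn(1,x)}, and your reduction of $\sK_n(\varpi_{-\f12,\g};(\xi,1),(y,s))$ to $\Kb_n\big(U_{\g,\frac{d-2}{2}};\one,(\la\xi,y\ra,s)\big)$ via the corner identification $(\sqrt{t},t)=(1,1)=\one$ at $t=1$ is correct and is the whole content of the proof.

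However, your final substitution is backwards, and since you flag it as the one point needing care, it is worth correcting. From your own (correct) displayed identity, the second slot of the $\UU$-kernel is $(\la\xi,y\ra,s)$, so Theorem~\ref{thm:Kn(1,x)} must be applied with $x=(x_1,x_2)=(\la\xi,y\ra,s)$, i.e.\ $x_1=\la\xi,y\ra=\sqrt{s}\,\la\xi,\eta\ra$ --- not $x_1=\la\xi,\eta\ra$ as you insist at the end. This yields $z(x,v)=1-(1-v^2)(1-\la\xi,y\ra)-\tfrac12(1-v)^2(1-s)$, which is precisely the explicit right-hand expression in the corollary. The middle expression $z((\la\xi,\eta\ra,s),v)$ in the corollary's statement is evidently a typo for $z((\la\xi,y\ra,s),v)$: the two forms differ by the factor $\sqrt{s}$ in the inner-product term and are \emph{not} reconciled by the observation that $t=1$ (that observation only fixes the first slot of the kernel, not the second). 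Your claim that ``the two forms of $z'$ in the statement are consistent'' is therefore false, and the version you declare to be the careful one, $1-(1-v^2)(1-\la\xi,\eta\ra)-\tfrac12(1-v)^2(1-s)$, is the wrong one. With that single substitution repaired, the proof is complete and matches the paper's.
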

 
This is a corollary of \eqref{eq:reprodV2} and \eqref{eq:Kn(1,x)}. Similarly, using \eqref{eq:reprodV1} 
and \eqref{eq:Kn(1,x)}, we can derive an explicit formula for $\b > -\f12$, which is however more involved.

\subsection{Summability of Fourier orthogonal series}
Let $ \proj_n ( \varpi_{\b,\g})$ be the orthogonal projection operator 
$$
\proj_n( \varpi_{\b,\g}): L^2(\VV_0^{d+1}; \varpi_{\b,\g})
   \mapsto \CV_n(\VV_0^{d+1}, \varpi_{\b,\g}).
$$
In terms of the reproducing kernel  $\sP_n(\varpi_{\b,\g})$ of $\CV_n(\VV_0^{d+1}, \varpi_{\b,\g})$, we have
$$
 \proj_n ( \varpi_{\b,\g}; f) = \sb_{\b,\g} \int_{\VV_0^{d+1}} f(y,s) \sP_n\big(\varpi_{\b,\g}; (x,t), (y,s)\big) 
   \varpi_{\b,\g}(s) \d \s(y,s).
$$
For $f\in L^2(\VV_0^{d+1}; \varpi_{\b,\g})$, the Fourier orthogonal series of $f$ on $\VV_0^{d+1}$ is defined by
$$
   f = \sum_{n=0}^\infty \proj_n ( \varpi_{\b,\g}; f).
$$

Below we study the summability of this Fourier orthogonal series. We start with a definition. 
Recall that $\UU$ denotes the domain bounded by the parabolic $x_2 = x_1^2$ and $x_2=1$ in $\RR^2$. 
We further denote by $\UU_0$ the curved portion of the boundary of $\UU$,
$$
  \UU_0:= \{(x_1,x_2) \in \UU: x_2 = x_1^2\}.
$$

\begin{defn} \label{def:Tbg-V0}
Let $\b \ge -\f12$ and $\g > -1$. Let $g: \UU_0\times \UU \mapsto \RR$ such that, for each $t \in [0,1]$, 
the function $x \mapsto g((\sqrt{t},t); x)$ is in $L^1\big(\UU;U_{\g, \a}\big)$. For $(x,t) = 
(\sqrt{t}\xi,t) \in \VV_0^{d+1}$ and $(y,s) = (\sqrt{s} \eta,s) \in \VV_0^{d+1}$, define for $\b > -\f12$
\begin{align*}
   \sT_{\b,\g} g\big( (x,t), (y,s)\big) :=  
        c_{ \frac{d-2}{2},\b-\f12} c_{\b+\f12}   \int_{-1}^1 \int_{-1}^1 & g\left( \big(\sqrt{t}, t \big),
         \big(\sqrt{s}\big ( \tfrac{1-z_1}{2}  \la \xi,\eta \ra + \tfrac{1+z_1}2 z_2\big), s \big)\right)  \\
  & \times (1-z_1)^\frac{d-2}{2} (1+z_1)^{\b-\f12}  (1-z_2^2)^{\b}  \d z \notag
\end{align*}
and, furthermore, define for $\b = -\f12$ 
$$
    \sT_{-\f12,\g} g\big( (x,t), (y,s)\big) :=  g\left(\big(\sqrt{t}, t \big), \big( \la \xi, y\ra, s \big) \right).
$$
\end{defn}

The definition of $\sT_{\b,\g}$ is motivated by the relation, by \eqref{eq:reprodV1} and  \eqref{eq:reprodV2}, 
\begin{equation}\label{eq:PnV=PnU}
   \sP_n (\varpi_{\b,\g}; (x,t), (y,s)) =  \sT_{\b,\g} \Pb_n (U_{\g, \b+\frac{d-1}{2}}) \big( (x,t), (y,s)\big).
\end{equation}
For each fixed $(x,t) \in \VV_0^{d+1}$, this is a bounded operator as seen below. 

\begin{prop}\label{prop:Tbg-bdd}
Let $\b \ge -\f12$ and $\g > -1$. Let $g: \UU_0\times \UU \mapsto \RR$ such that, for each $t \in [0,1]$, 
the function $g\big((\sqrt{t},t); \cdot\big)$ is in $L^1\big(\UU;U_{\g, \a}\big)$ with $\a =\b+\frac{d-1}{2}$.
Then, for $(x,t) \in \VV_0^{d+1}$, 
\begin{equation}\label{eq:Tbg-bdd}
   \int_{\VV_0^{d+1}} \left|\sT_{\b,\g} g\big((x,t), (y,s) \big) \right| \varpi_{\b,\g}(s) \d \s(y,s) 
    \le c  \int_{\UU} \left|g \left(\big(\sqrt{t},t\big), z\right)\right| U_{\g,\a}(z) \d z. 
\end{equation}
\end{prop}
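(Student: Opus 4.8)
The plan is to reduce the surface integral on the left side of \eqref{eq:Tbg-bdd} to the integral over $\UU$ on the right side by carrying out the integrations one at a time, matching the structure of the defining integral for $\sT_{\b,\g}$. First I would treat the case $\b = -\f12$, which is the cleaner one: here $\sT_{-\f12,\g} g((x,t),(y,s)) = g((\sqrt t,t),(\la\xi,y\ra,s))$, so that after the decomposition
$$
   \int_{\VV_0^{d+1}} |\sT_{-\f12,\g} g((x,t),(y,s))|\,\varpi_{-\f12,\g}(s)\,\d\s(y,s)
    = \int_0^1 s^{\f{d-1}{2}} \int_\sph |g((\sqrt t,t),(\sqrt s\la\xi,\eta\ra, s))|\,\d\s_\SS(\eta)\, \varpi_{-\f12,\g}(s)\,\d s,
$$
the inner integral over $\sph$ is evaluated by the standard formula that integrates a function of $\la\xi,\eta\ra$ against a one-dimensional Gegenbauer (equivalently, Jacobi $(\f{d-2}{2},\f{d-2}{2})$) weight; this turns it into $c_{\f{d-2}{2}} \int_{-1}^1 |g((\sqrt t,t),(\sqrt s\, u, s))| (1-u^2)^{\f{d-3}{2}}\d u$. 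Changing variables $z_1 = \sqrt s\, u$ (so $x_1 = z_1$, $x_2 = s$ run over $\UU$) and recalling $\varpi_{-\f12,\g}(s) = s^{-\f12}(1-s)^\g$ and $\d\sigma$ carries $s^{\f{d-1}{2}}$, the combined power of $s$ and the factor $(1-u^2)^{\f{d-3}{2}}$ reassemble exactly into $U_{\g,\f{d-2}{2}}(z) = (1-z_2)^\g(z_2-z_1^2)^{\f{d-3}{2}}$ together with the constant $c_{\f{d-2}{2}}$ that cancels against the one in \eqref{eq:integralBB}; this is precisely \eqref{eq:intBB}–\eqref{eq:integralBB} run in reverse, and the constant $c$ is pinned down in the process.

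For $\b > -\f12$ the same strategy works but with one extra layer. Starting from the definition of $\sT_{\b,\g}g$, I bring the absolute value inside the $z_1,z_2$ integrals (they have nonnegative densities), then apply the surface decomposition and Fubini to write the left side of \eqref{eq:Tbg-bdd} as
$$
   c_{\f{d-2}{2},\b-\f12} c_{\b+\f12} \int_{-1}^1\!\!\int_{-1}^1 \!\Big(\int_0^1 s^{\f{d-1}{2}} \int_\sph \big|g\big((\sqrt t,t), (\sqrt s(\tfrac{1-z_1}{2}\la\xi,\eta\ra + \tfrac{1+z_1}{2}z_2), s)\big)\big| \d\s_\SS(\eta)\, \varpi_{\b,\g}(s)\d s\Big) (1-z_1)^{\f{d-2}{2}}(1+z_1)^{\b-\f12}(1-z_2^2)^\b \d z_1\d z_2.
$$
For fixed $z_1,z_2$ the inner triple integral over $(s,\eta)$ has exactly the form handled in the $\b=-\f12$ case — the argument of $g$ depends on $\eta$ only through $\la\xi,\eta\ra$, and the affine map $u\mapsto \tfrac{1-z_1}{2}u + \tfrac{1+z_1}{2}z_2$ sends $[-1,1]$ into $[-1,1]$. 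So by the $\b=-\f12$ computation that inner integral equals $c\int_\UU |g((\sqrt t,t), z)| U_{\g,\f{d-2}{2}}(z)\,\d z$ after a change of variables; crucially this bound is \emph{independent of} $z_1, z_2$ (the affine substitution only shrinks the range, and since we only want an upper bound we may extend back to all of $\UU$ — here one uses $|g|\ge 0$ and that $U_{\g,\a}$ dominates appropriately, or more simply one extends the $u$-range at the cost of a constant). Then the outer $z_1,z_2$ integral of the constant density $c_{\f{d-2}{2},\b-\f12}c_{\b+\f12}(1-z_1)^{\f{d-2}{2}}(1+z_1)^{\b-\f12}(1-z_2^2)^\b$ over $[-1,1]^2$ is finite by the definition \eqref{eq:c-lam} of these normalizing constants, producing the final constant $c$ in \eqref{eq:Tbg-bdd}.

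The main obstacle I anticipate is the bookkeeping in the $\b>-\f12$ case: after the affine substitution $u' = \tfrac{1-z_1}{2}u + \tfrac{1+z_1}{2}z_2$, the image of $[-1,1]$ is a subinterval of $[-1,1]$ whose endpoints depend on $z_1,z_2$, and one must check that extending the integral back to the full weight on $\UU$ is legitimate and uniform. The clean way around this is to note that we are bounding $L^1$ norms of a \emph{nonnegative} integrand, so dropping the $z_1,z_2$-dependent restriction only \emph{increases} the inner integral to a fixed quantity $c_1\int_\UU |g((\sqrt t,t),z)|U_{\g,\a}(z)\d z$; one should double-check that the Jacobian of the affine map, $\tfrac{2}{1-z_1}$, is absorbed correctly — it combines with the $(1-z_1)^{\f{d-2}{2}}$ factor in the outer density and leaves something still integrable over $z_1\in[-1,1]$ precisely because the power drops only to $(1-z_1)^{\f{d-4}{2}}$-type and the other outer factors are harmless — and this is exactly the kind of cancellation already encoded in the identity \eqref{eq:ZtoZ} used to prove \eqref{eq:reprodV1}, so consistency with that identity is the sanity check. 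Everything else is the routine reversal of \eqref{eq:integralBB} and the standard sphere-to-interval integration formula.
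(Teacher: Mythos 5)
Your treatment of the case $\b=-\f12$ is correct and is exactly the paper's argument: decompose the surface integral, apply \eqref{eq:IntSph}, and reassemble via \eqref{eq:integralBB}; there the bound is in fact an identity. The case $\b>-\f12$, however, contains a genuine gap. You claim that for fixed $(z_1,z_2)$ the inner integral over $(s,\eta)$ is bounded, uniformly in $(z_1,z_2)$, by $c\int_\UU|g((\sqrt t,t),z)|\,U_{\g,\cdot}(z)\,\d z$. This is false: as $z_1\to 1$ the affine map $u\mapsto\tfrac{1-z_1}{2}u+\tfrac{1+z_1}{2}z_2$ collapses $[-1,1]$ onto the single point $z_2$, so the inner integral degenerates into (a constant times) the evaluation $\int_0^1|G(\sqrt s\,z_2,s)|s^{\b+\f{d-1}2}(1-s)^\g\,\d s$, which cannot be controlled by any $L^1$ norm of $G$ in the first variable (test with an approximate identity concentrated at $y=z_2$). "Extending the $u$-range back to all of $[-1,1]$" does not help, because the obstruction is not the range but the density: the pushforward of $(1-u^2)^{\f{d-3}2}\d u$ under the affine map has fixed total mass concentrated on a shrinking interval. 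There is a second, independent problem: even if your uniform inner bound held, it would produce the weight $U_{\g,\f{d-2}{2}}$, and since $(z_2-z_1^2)^{\f{d-3}{2}}\ge(z_2-z_1^2)^{\b+\f{d-2}{2}}$ on $\UU$ for $\b\ge-\f12$, a bound by $\int|g|\,U_{\g,\f{d-2}{2}}$ does not imply the stated bound by $\int|g|\,U_{\g,\b+\f{d-1}{2}}$ — the hypothesis only puts $g$ in $L^1$ against the \emph{smaller} weight. The whole point of the proposition is that the averaging over $(z_1,z_2)$ upgrades the exponent from $\f{d-3}{2}$ to $\b+\f{d-2}{2}$, and that cannot be seen by estimating the inner integral separately for each $(z_1,z_2)$.

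The step you are missing is the computation of the pushforward density: with the $y$-integral moved outermost, one must show that
$$
\int (1-z_1)^{\f{d-2}{2}}(1+z_1)^{\b-\f12}(1-z_2^2)^{\b}(1-u^2)^{\f{d-3}{2}}\,
\delta\Big(y-\tfrac{1-z_1}{2}u-\tfrac{1+z_1}{2}z_2\Big)\,\d z_1\,\d z_2\,\d u
\;\le\; c\,(1-y^2)^{\b+\f{d-2}{2}},
$$
uniformly in $y\in(-1,1)$. This is the technical heart of the paper's proof: after substituting $z_1\mapsto y$ one picks up the factor $\big((1-y)v_1+(1+y)v_2\big)^{-\b-\f{d-1}{2}}$ (note $\tfrac{1-z_1}{2}=\tfrac{z_2-y}{z_2-u}$, so the Jacobian produces a singular denominator in $z_2-u$, not merely a power of $1-z_1$), and one must split $[-1,1]^2$ into the triangles $u\ge z_2$ and $u<z_2$, interchange the order of integration, and make the further substitutions $v_1=(z_2-y)/(1\mp y)$, $v_2=(y-u)/(1\pm y)$ to extract the factor $(1-y^2)^{\b+\f{d-2}{2}}$ and verify that what remains is a bounded double integral. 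Your back-of-envelope accounting that the Jacobian "only drops the power to $(1-z_1)^{\f{d-4}{2}}$" also fails already at $d=2$, where that exponent is $-1$ and not integrable; the cancellations are genuinely two-dimensional and are what the paper's triangle decomposition is designed to capture.
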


\begin{proof} 
Let $G(z) = g( (t,\sqrt{t}), z)$ for $z \in \UU$ in this proof. We first consider the case $\b = -\f12$. 
Using the well-known integral relation
\begin{equation}  \label{eq:IntSph}
 \frac{1}{\o_d} \int_{\sph} f(\la \xi,\eta \ra) \d \s(\xi) =  c_{\f{d-2}{2}} \int_{-1}^1 f(u) (1-u^2)^{\f{d-3}2} \d u 
\end{equation}
we obtain, setting $y = \sqrt{s}\, \eta$ wtih $\eta \in \sph$, that
\begin{align*}
 &  \sb_{-\f12,\g} \int_{\VV^{d+1}}  \left|\sT_{-\f12,\g} g\big( (x,t), (y,s)\big)\right| \varpi_{-\f12,\g}(s) \d \s(y,s) \\
 & \qquad =   \sb_{-\f12,\g} 
   \int_0^1 s^{\frac{d-1}{2}} \int_{\sph} \left |G\big(\sqrt{s} \, \la \xi,\eta\ra, s\big)\right|\varpi_{-\f12,\g}(s) \d \s(\xi) \d s \\
 & \qquad = \sb_{-\f12,\g} c_{\f{d-2}{2}} \o_d  \int_0^1 \int_{-1}^1  \left |G\big(\sqrt{s} \, u, s\big)\right|
       (1-u^2)^{\f{d-3}{2}} \d u \,
      s^{\frac{d-2}{2}} (1-s)^\g \d s \\
  & \qquad =  \db_{\g,\f{d-2}{2}} \int_\UU  |G(z)| (z_2-z_1^2)^{\f{d-3}{2}} (1-z_2)^\g \d z,     
\end{align*}
where we have used \eqref{eq:integralBB} in the last step, and the constant can be verified simply 
by the fact that if $g = 1$, then $\sT_{\b,\g} g =1$. In particular, this shows that the inequality \eqref{eq:Tbg-bdd} 
is in fact an identity for $\b=-\f12$. 

We now consider the case $\b > -\f12$. Using \eqref{eq:IntSph}, we obtain
\begin{align*}
 \int_{\VV_0^{d+1}}&  \left | \sT_{\b,\g} g\big( (x,t), (y,s)\big)\right| \varpi_{\b,\g}(s) \d \s(y,s) \\
  &  \le    c  \int_{0}^1 \int_{-1}^1 \int_{-1}^1 \int_{-1}^1 
          \left| G\big( \sqrt{s}\big ( \tfrac{1-z_1}{2}  u + \tfrac{1+z_1}2 z_2\big), s \big)\right|\\
  &  \times (1-z_1)^\frac{d-2}{2} (1+z_1)^{\b-\f12}  (1-z_2^2)^{\b}  (1-u^2)^{\f{d-3}2}   
      s^{\b+\frac{d-1}{2}} (1-s)^\g \d z \d u\d s, 
\end{align*}
where $c = \o_d c_{\f{d-2}{2}} c_{ \frac{d-2}{2},\b-\f12} c_{\b+\f12}$. Making a change of variables $z_1 \mapsto y$
with
$$
   y = \tfrac{1-z_1}{2}  u + \tfrac{1+z_1}2 z_2
$$
and dividing the integral over $\d u \d z_2$ on $[-1,1]^2$ into two integrals over the triangles 
$\{(u, z_2) \in [-1,1]^2: u \ge z_2\}$ and $\{(u, z_2) \in [-1,1]^2: u < z_2\}$, respectively, we can write 
the triple integral against $\d u \d z$ as a sum of two integrals
\begin{align*}
& 2^{\b+\f{d-1}{2}} \int_{-1}^1 \int_{u}^1 \int_{u}^{z_2}  \left| G(\sqrt{s}y,s)\right|  (z_2-y)^{\f{d-2}{2}} (y-u)^{\b-\f12} dy 
    \frac{(1-z_2^2)^\b (1-u^2)^{\f{d-3}{2}}}{(z_2-u)^{\b+\f{d-1}{2}}} \d z_2 \d u \\
+& 2^{\b+\f{d-1}{2}} \int_{-1}^1 \int_{-1}^u \int_{z_2}^{u}  \left| G(\sqrt{s}y,s)\right|  (y-z_2)^{\f{d-2}{2}} (u-y)^{\b-\f12} dy 
    \frac{(1-z_2^2)^\b (1-u^2)^{\f{d-3}{2}}}{(u-z_2)^{\b+\f{d-1}{2}}} \d z_2 \d u.    
\end{align*}
Changing the order of integrals in both terms, we see that this sum is equal to 
\begin{align*}
  2^{\b+\f{d-1}{2}} \int_{-1}^1  \left| G(\sqrt{s}y,s) \right| & \left [ \int_{-1}^y \int_{y}^{1} (z_2-y)^{\f{d-2}{2}} (y-u)^{\b-\f12}   
    \frac{(1-z_2^2)^\b (1-u^2)^{\f{d-3}{2}}}{(z_2-u)^{\b+\f{d-1}{2}}} \d z_2 \d u \right. \\
 + & \left. \int_{y}^1 \int_{-1}^{y}  (y-z_2)^{\f{d-2}{2}} (u-y)^{\b-\f12}   
    \frac{(1-z_2^2)^\b (1-u^2)^{\f{d-3}{2}}}{(u-z_2)^{\b+\f{d-1}{2}}} \d z_2 \d u \right] \d y.    
\end{align*}
Making a change of variables $(u,z_2) \mapsto (v_1,v_2)$ with $v_1 = (z_2-y)/(1-y)$ and $v_2 = (y-u)/(1+y)$ 
in the first double integral in the square bracket, and a change of variables $(u,z_2) \mapsto (v_1,v_2)$ with 
$v_1 = (y-z_2)/(1+y)$ and $v_2 = (u-y)/(1-y)$ in the second double integral in the square bracket, we see that
the expression in the above square bracket is equal to    
\begin{align*}
 (1-y^2)^{\b+\f{d-2}{2}}  & \bigg [(1-y) \int_{0}^1 \int_{0}^{1} 
      \frac{(1+y+(1-y)v_1)^\b (1-y+(1+y)v_2)^{\f{d-3}{2}}}{((1-y)v_1+(1+y)v_2)^{\b+\f{d-1}{2}}}   \\
  & \qquad\qquad\qquad \times v_1^{\f{d-2}{2}} v_2^{\b-\f12} (1-v_1)^\b (1- v_2)^{\f{d-3}{2}}\d v_1\d v_2  \\
 + &\, (1+y) \int_{0}^1 \int_{0}^{1} 
      \frac{(1-y+(1+y)v_1)^\b (1+y+(1-y)v_2)^{\f{d-3}{2}}}{((1+y)v_1+(1-y)v_2)^{\b+\f{d-1}{2}}}   \\
  &  \qquad\qquad\qquad \times v_1^{\f{d-2}{2}} v_2^{\b-\f12} (1-v_1)^\b (1- v_2)^{\f{d-3}{2}}\d v_1\d v_2  \bigg] \d y.    
\end{align*}
Since $0 \le v_1, v_2 \le 1$ and $1 \pm y \ge 0$, it follows that $(1-y)v_1 \le (1-y)v_1+(1+y)v_2$, 
$$
 \frac{v_2 (1+y+(1-y)v_1)}{((1-y)v_1+(1+y)v_2)} \le 1\quad \hbox{and} \quad 
  \frac{v_1 (1-y+(1+y)v_2)}{((1-y)v_1+(1+y)v_2)} \le 1, 
$$
which implies that the first term in the square bracket is bounded by 
$$
   \int_0^1\int_0^1 v_1^{-\f12} v_2^{-\f12} (1-v_1)^\b (1-v_2)^\frac{d-3}{2} \d v_1 \d v_2 = \frac{\pi \Gamma(\b+1)\Gamma(\frac{d-1}{2})}{\Gamma(\b+\f32)\Gamma(\frac{d}{2})}.
$$
Similarly, it is easy to see that the same bound holds for the second term in the square bracket. 
Putting all these estimates together, we conclude that 
\begin{align*}
   \int_{\VV_0^{d+1}}&  \left |\sT_{\b,\g} g\big( (x,t), (y,s)\big)\right | \varpi_{\b,\g}(s) \d \s(y,s)\\
    & \le c \int_0^1 \int_{-1}^1 |G(\sqrt{s}y,s) | (1-y^2)^{\b+\f{d-2}{2}}  s^{\b+\frac{d-1}{2}} (1-s)^\g \d y \d s \\
    & = c   \int_{\UU} |G(z_1,z_2)| U_{\g,\b+ \frac{d-1}{2}} (z_1,z_2) \d z, 
\end{align*}
where the last step follows from \eqref{eq:integralBB}. This completes the proof. 
\end{proof}

\begin{defn}
Let $\b \ge -\f12$ and $\g > -1$.  Let $g: \UU_0\times \UU \mapsto \RR$ such that, for each $t \in [0,1]$, 
the function $x \mapsto g((\sqrt{t},t); x)$ is in $L^1\big(\UU; U_{\g, ,\b+\f{d-1}2}\big)$. 
For $f \in L^1(\VV_0^{d+1}; \varpi_{\b,\g})$ and $(y,s) \in \VV_0^{d+1}$, define
$$
  (f\ast_{\VV_0} g)(y,s) = \sb_{\b,\g}\int_{\VV_0^{d+1}} f(x,t) \sT_{\b,\g} g\big( (x,t), (y,s) \big) \varpi_{\b,\g}(t) \d\s(x,t).
$$
\end{defn}

The operator $\sT_{\b,\g}$ plays the role of a translation in the definition of the pseudo convolution 
$f\ast_{\VV_0} g$. By \eqref{eq:PnV=PnU}, it follows that the project operator onto 
$\CV_n(\VV_0^{d+1}, \varpi_{\b,\g})$
satisfies
$$
  \proj_n(\varpi_{\b,g}; f) = f \ast_{\VV_0} \Pb_n\left(U_{\g,\b+\f{d-1}2}\right).
$$

\begin{prop} \label{prop:f*gV0}
Let $\b \ge -\f12$ and $\g > -1$. For $f \in L^p (\VV_0^{d+1};\varpi_{\b,\g})$, $1 \le p < \infty$, and
$f \in C(\VV_0^{d+1})$ for $p = \infty$, 
$$
    \| f\ast_{\VV_0} g\|_{L^p (\VV_0^{d+1};\varpi_{\b,\g})} \le c \|f\|_{L^p (\VV_0^{d+1};\varpi_{\b,\g})}
      \max_{t\in [0,1]} \int_{\UU} \left|g\left( \big(\sqrt{t}, t), z \right)\right| U_{\g,\b+\f{d-1}{2}}(z) \d z.
$$
\end{prop}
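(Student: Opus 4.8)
The plan is to prove the Young-type inequality for $f \ast_{\VV_0} g$ by the standard route: first establish the cases $p=1$ and $p=\infty$ directly, then interpolate. The key structural input is Proposition~\ref{prop:Tbg-bdd}, which controls the total mass of $\sT_{\b,\g} g((x,t),\cdot)$ as an integral over $\VV_0^{d+1}$, with a bound that is uniform over $(x,t)$ once we replace $t$ by its worst value. Write $\|g\|_\ast := \max_{t\in[0,1]} \int_\UU |g((\sqrt{t},t),z)|\,U_{\g,\a}(z)\,\d z$ with $\a = \b+\f{d-1}2$ for brevity throughout.

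\textbf{Case $p=\infty$.} For $f\in C(\VV_0^{d+1})$ we estimate pointwise:
$$
 |(f\ast_{\VV_0} g)(y,s)| \le \|f\|_\infty\, \sb_{\b,\g}\int_{\VV_0^{d+1}} \big|\sT_{\b,\g} g((x,t),(y,s))\big|\,\varpi_{\b,\g}(t)\,\d\s(x,t).
$$
Here the roles of $(x,t)$ and $(y,s)$ in $\sT_{\b,\g}g$ look reversed compared to Proposition~\ref{prop:Tbg-bdd}, but inspecting Definition~\ref{def:Tbg-V0} one sees $\sT_{\b,\g} g((x,t),(y,s))$ depends on $(x,t)$ only through $(\sqrt t,t)$ and on $(y,s)$ through $\eta$ and $s$; moreover the integrand as a function of $(x,t)$ is, for fixed $(y,s)$, precisely of the form handled in Proposition~\ref{prop:Tbg-bdd} after the harmless relabeling $\xi\leftrightarrow\eta$ (the $\la\xi,\eta\ra$ term being symmetric). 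Thus the inner integral is bounded by $c\,\|g\|_\ast$ uniformly in $(y,s)$, giving $\|f\ast_{\VV_0}g\|_\infty \le c\,\|f\|_\infty\,\|g\|_\ast$.

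\textbf{Case $p=1$.} Take absolute values and apply Tonelli:
$$
 \|f\ast_{\VV_0} g\|_{L^1(\VV_0^{d+1};\varpi_{\b,\g})} \le \sb_{\b,\g}^2\int_{\VV_0^{d+1}}\!\!|f(x,t)|\,\varpi_{\b,\g}(t)\Big(\int_{\VV_0^{d+1}}\big|\sT_{\b,\g}g((x,t),(y,s))\big|\,\varpi_{\b,\g}(s)\,\d\s(y,s)\Big)\d\s(x,t).
$$
The inner integral is now exactly the left-hand side of \eqref{eq:Tbg-bdd}, hence bounded by $c\int_\UU |g((\sqrt t,t),z)|\,U_{\g,\a}(z)\,\d z \le c\,\|g\|_\ast$, uniformly in $(x,t)$. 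Pulling this constant out leaves $\sb_{\b,\g}\int_{\VV_0^{d+1}}|f(x,t)|\varpi_{\b,\g}(t)\,\d\s(x,t) = \|f\|_{L^1(\VV_0^{d+1};\varpi_{\b,\g})}$, so $\|f\ast_{\VV_0}g\|_{L^1} \le c\,\|f\|_{L^1}\,\|g\|_\ast$.

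\textbf{Interpolation.} For fixed $g$, the map $f\mapsto f\ast_{\VV_0}g$ is linear, and the two endpoint estimates above show it is bounded $L^1\to L^1$ and $L^\infty\to L^\infty$ with operator norm $\le c\,\|g\|_\ast$ in both cases. The Riesz--Thorin interpolation theorem then gives boundedness $L^p\to L^p$ with the same bound $c\,\|g\|_\ast$ for all $1\le p\le\infty$, which is the claimed inequality; the density of $C(\VV_0^{d+1})$ handles the statement as phrased. The only point requiring care—and the one I would treat as the main obstacle—is justifying the $p=\infty$ estimate rigorously, i.e.\ checking that Proposition~\ref{prop:Tbg-bdd} really does apply with $(x,t)$ as the variable of integration rather than $(y,s)$; this rests on the symmetry of $\sT_{\b,\g}g$ in its $\xi,\eta$ dependence coming from the inner product $\la\xi,\eta\ra$, together with the fact that $g$ is evaluated at $(\sqrt t,t)\in\UU_0$ regardless. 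Once that symmetry is noted, both endpoint bounds reduce to a single application of Proposition~\ref{prop:Tbg-bdd} and the rest is the routine Fubini-plus-interpolation argument.
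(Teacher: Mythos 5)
Your $p=1$ endpoint is correct: taking absolute values, applying Tonelli, and invoking \eqref{eq:Tbg-bdd} on the inner integral over $(y,s)$ is exactly the intended use of Proposition~\ref{prop:Tbg-bdd}. The gap is in the $p=\infty$ endpoint, which you rightly flagged as the delicate step but then resolved with a symmetry claim that is false. What you need there is
$$
\sup_{(y,s)\in\VV_0^{d+1}}\ \int_{\VV_0^{d+1}} \big|\sT_{\b,\g} g\big((x,t),(y,s)\big)\big|\,\varpi_{\b,\g}(t)\,\d\s(x,t)\ \le\ c\,\max_{t\in[0,1]}\int_\UU \big|g\big((\sqrt{t},t),z\big)\big|\,U_{\g,\a}(z)\,\d z,
$$
an $L^1$ bound of the kernel in its \emph{first} pair of variables. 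This does not follow from Proposition~\ref{prop:Tbg-bdd} by relabeling $\xi\leftrightarrow\eta$: the inner product $\la\xi,\eta\ra$ is indeed symmetric, but the heights $t$ and $s$ play structurally different roles in Definition~\ref{def:Tbg-V0}. The height of the first variable is pinned into the first argument $(\sqrt{t},t)\in\UU_0$ of $g$, while the height of the second variable fixes the horizontal slice $\{z:z_2=s\}$ of $\UU$ on which the second argument of $g$ lives. Integrating over $(y,s)$, as in Proposition~\ref{prop:Tbg-bdd}, sweeps the second argument of $g((\sqrt{t},t),\cdot)$ over all of $\UU$ and reproduces the two-dimensional integral $\int_\UU|g((\sqrt{t},t),z)|U_{\g,\a}(z)\,\d z$. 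Integrating over $(x,t)$ with $(y,s)$ fixed instead yields, after the spherical integration \eqref{eq:IntSph}, something of the form $\int_0^1\int_{-1}^1 |g((\sqrt{t},t),(\sqrt{s}\,u,s))|\,(\cdots)\,\d u\,\d t$: the second argument of $g$ never leaves the one-dimensional slice $z_2=s$. A trace of $g((\sqrt{t},t),\cdot)$ on a single slice is not controlled by its integral over $\UU$ for $g$ in the stated $L^1$ class; e.g.\ $g((\sqrt{t},t),z)=\phi(z_2)$ with $\phi$ a tall narrow bump at $z_2=s_0$ makes the right-hand side small while the first-variable integral at $s=s_0$ is large. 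So the $L^\infty\to L^\infty$ operator norm is not bounded by the claimed quantity, and the Riesz--Thorin interpolation cannot be run.

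The paper's argument is different in structure and sidesteps this: it applies the integral (Minkowski) inequality so as to pull the $L^p$ norm inside the integral defining $f\ast_{\VV_0}g$, leaving only the second-variable kernel integral $\int_{\VV_0^{d+1}}|\sT_{\b,\g}g((x,t),(y,s))|\varpi_{\b,\g}(s)\,\d\s(y,s)$, which is exactly what \eqref{eq:Tbg-bdd} controls, followed by a maximum over $t$. If you wish to keep your endpoint-plus-interpolation scheme, you would have to prove the first-variable kernel bound as a separate lemma; it does hold for the kernels actually used downstream (e.g.\ $g=\Kb_n^\delta(U_{\g,\a};\cdot,\cdot)$, for which $\sT_{\b,\g}g$ coincides with the symmetric kernel $\sK_n^\delta(\varpi_{\b,\g})$ by \eqref{eq:KdV=KdU}), but it is not available for the general $g$ of the proposition, so as written the argument is incomplete for $1<p\le\infty$.
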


\begin{proof}
By the Minkowski inequality, we obtain
$$
    \| f\ast_{\VV_0} g\|_{L^p (\VV_0^{d+1};\varpi_{\b,\g})} \le \|f\|_{L^p (\VV_0^{d+1};\varpi_{\b,\g})}
       \sb_{\b,\g} \int_{\UU} \left | T_{\b,\g} g\left( (x,t), (y,s) \right) \right| \varpi_{\b,\g}(s) \d \s (y,s).
$$
Applying the inequality \eqref{eq:Tbg-bdd} on the integral in the right-hand side, the stated inequality
follows readily by taking the maximum over $t$.
\end{proof}

The boundedness of the pseudo convolution can be used to study the convergence of the Fourier 
orthogonal series on the surface of the paraboloid. As in the case of the previous section, we consider 
the Ces\`aro means. For $\delta > -1$, let $\sK_n^\delta(\varpi_{\b,\g}; (x,t),(y,s))$ be the kernel of the Ces\`aro means \
$\sS_n^\delta \big(\varpi_{\b,\g}; f)$, which can be written in terms of the reproducing kernel 
$\sP_n(\varpi_{\b,g})$ in analogous to \eqref{eq:C-KernelU}, and it satisfies, by \eqref{eq:PnV=PnU}, that 
\begin{equation}\label{eq:KdV=KdU}
  \sK_n^\delta (\varpi_{\b,\g}; (x,t), (y,s)) =  \sT_{\b,\g} \Kb_n^\delta (U_{\g, \b+\frac{d-1}{2}}) \big( (x,t), (y,s)\big).
\end{equation}
In terms of the pseudo convolution, we can write 
$$
\sS_n^\delta \big(\varpi_{\b,\g}; f, (x,t) \big) = f \ast_{\VV_0} \Kb_n^\delta \big(U_{\g, \b+\frac{d-1}{2}}\big)(x,t).
$$

\begin{thm}
Let $d\ge 2$, $\b \ge - \f12$ and $\g> -1$. If $f \in C(\VV_0^{d+1})$, then $\sS_n^\delta\big(\varpi_{\b,\g}; f, (\xi,1)\big)$ 
converges to $f(\xi, 1)$ uniformly for $\xi \in \sph$ provided $\delta > \b+\g+ \f{d+2}{2}$.
\end{thm}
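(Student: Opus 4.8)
The plan is to reduce the convergence of $\sS_n^\delta(\varpi_{\b,\g}; f, (\xi,1))$ to an $L^1$-bound on a one-dimensional kernel, exactly paralleling the treatment of $\UU$ in Section 2. First I would observe that since $\sS_n^\delta(\varpi_{\b,\g}; f) = f \ast_{\VV_0} \Kb_n^\delta(U_{\g,\b+\f{d-1}{2}})$, a standard functional-analytic argument (the operator is a linear integration operator against a kernel, acting on $C(\VV_0^{d+1})$) shows that uniform convergence at the point $(\xi,1)$ for all $f \in C(\VV_0^{d+1})$ holds if and only if the operator norm
$$
 \sup_n \sb_{\b,\g} \int_{\VV_0^{d+1}} \big| \sK_n^\delta\big(\varpi_{\b,\g}; (\xi,1), (y,s)\big)\big| \varpi_{\b,\g}(s)\, \d\s(y,s)
$$
is finite, together with convergence on a dense subset (polynomials), which is automatic. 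By \eqref{eq:KdV=KdU}, $\sK_n^\delta(\varpi_{\b,\g}; (\xi,1), \cdot) = \sT_{\b,\g}\Kb_n^\delta(U_{\g,\b+\f{d-1}{2}})((\xi,1),\cdot)$, so I would apply Proposition~\ref{prop:Tbg-bdd} (with $g = \Kb_n^\delta(U_{\g,\b+\f{d-1}{2}})$ and $t = 1$, so that $(\sqrt t, t) = \one$) to bound this quantity by a constant times
$$
 \int_{\UU} \big| \Kb_n^\delta\big(U_{\g,\b+\f{d-1}{2}}; \one, z\big)\big|\, U_{\g,\b+\f{d-1}{2}}(z)\, \d z.
$$

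Next I would invoke the closed formula for this kernel at $\one$: by \eqref{eq:KnD(1,x)} with $a = \g$ and $b = \b+\f{d-1}{2}$, so that $a+b+1 = \b+\g+\f{d+1}{2}$ and the second Jacobi index is $b = \b+\f{d-1}{2}$, we have
$$
 \Kb_n^\delta\big(U_{\g,\b+\f{d-1}{2}}; \one, z\big) = \frac{\delta}{n+\delta}\, c_{\b+\g+\f{d+1}{2},\,\b+\f{d-1}{2}} \int_{-1}^1 k_n^{\delta-1}\big(w_{\b+\g+\f{d+1}{2},\,\b+\f{d-1}{2}}; 1, z(x,t)\big) w_{\b+\g+\f{d+1}{2},\,\b+\f{d-1}{2}}(t)\, \d t.
$$
Putting this into the integral over $\UU$, bounding the absolute value by moving it inside, and using the known $L^1$-estimate for the Ces\`aro kernels of the Fourier--Jacobi series evaluated at the endpoint $1$ — namely that $\|k_n^{\delta-1}(w_{\a',\b'}; 1, \cdot)\|_{L^1(w_{\a',\b'})}$ is bounded uniformly in $n$ precisely when $\delta - 1 > \a' + \f12$, i.e. $\delta > \a' + \f32$ — with $\a' = \b+\g+\f{d+1}{2}$, gives the threshold $\delta > \b+\g+\f{d+1}{2}+\f32 = \b+\g+\f{d+2}{2}+\f12$. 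I should double-check the exact constant here; the cited Theorem~\ref{thm:CesaroU} gives the threshold $\delta > a+b+\f32$ for convergence on $\UU$ at $\one$, which with $a = \g$, $b = \b+\f{d-1}{2}$ is $\delta > \b+\g+\f{d-1}{2}+\f32 = \b+\g+\f{d+2}{2}$. So in fact I would route the argument through Remark~\ref{rem:CesaroU} / Theorem~\ref{thm:CesaroU} directly: the $L^1$-boundedness (uniform in $n$) of $\Kb_n^\delta(U_{\g,\b+\f{d-1}{2}}; \one, \cdot)$ on $\UU$ holds for $\delta > \g + (\b+\f{d-1}{2}) + \f32 = \b+\g+\f{d+2}{2}$, with no restriction relating $\g$ and $\b+\f{d-1}{2}$ (as Remark~\ref{rem:CesaroU} stresses, the restriction $a \ge b \ge 0$ is only needed for the product formula, not for the endpoint estimate).

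Thus the chain is: $\sup_n \|\sK_n^\delta(\varpi_{\b,\g}; (\xi,1), \cdot)\|_{L^1(\varpi_{\b,\g})} \le c\, \sup_n \|\Kb_n^\delta(U_{\g,\b+\f{d-1}{2}}; \one, \cdot)\|_{L^1(U_{\g,\b+\f{d-1}{2}})} < \infty$ for $\delta > \b+\g+\f{d+2}{2}$, which yields the uniform boundedness of the Ces\`aro operators $f \mapsto \sS_n^\delta(\varpi_{\b,\g}; f, (\xi,1))$ on $C(\VV_0^{d+1})$; combined with the trivial convergence on polynomials (for $\delta > 0$ the Ces\`aro means reproduce polynomials in the limit) and the density of polynomials in $C(\VV_0^{d+1})$ (Weierstrass-type approximation on the compact surface $\VV_0^{d+1}$), the Banach--Steinhaus / uniform-boundedness principle gives uniform convergence $\sS_n^\delta(\varpi_{\b,\g}; f, (\xi,1)) \to f(\xi,1)$ for every $f \in C(\VV_0^{d+1})$. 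I expect the only genuinely delicate point is the bookkeeping: making sure the kernel parameter identifications ($a \leftrightarrow \g$, $b \leftrightarrow \b+\f{d-1}{2}$) are correct everywhere and that the point $(\sqrt t, t)$ with $t=1$ is indeed $\one = (1,1)$, so that Proposition~\ref{prop:Tbg-bdd} applies with the curved-boundary argument sitting at the corner of $\UU$; once that is set up, everything else is a citation of results already in hand (Proposition~\ref{prop:Tbg-bdd}, \eqref{eq:KdV=KdU}, \eqref{eq:KnD(1,x)}, and Remark~\ref{rem:CesaroU}). A minor subtlety worth a sentence is verifying that the set where $g((\sqrt t, t); \cdot)$ must be $L^1(\UU; U_{\g,\b+\f{d-1}{2}})$ is satisfied by the polynomial kernel $\Kb_n^\delta$, which is trivial since it is a polynomial in $z$.
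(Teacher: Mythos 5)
Your proposal is correct and follows essentially the same route as the paper: reduce to the uniform $L^1$-boundedness of $\sK_n^\delta(\varpi_{\b,\g};(\xi,1),\cdot)$, use \eqref{eq:KdV=KdU} together with Proposition~\ref{prop:Tbg-bdd} at $t=1$ (so the first argument sits at $\one$) to pass to $\|\Kb_n^\delta(U_{\g,\b+\f{d-1}{2}};\one,\cdot)\|_{L^1(\UU;U_{\g,\b+\f{d-1}{2}})}$, and then cite Theorem~\ref{thm:CesaroU} and Remark~\ref{rem:CesaroU} for the threshold $\delta>\g+\b+\f{d-1}{2}+\f32=\b+\g+\f{d+2}{2}$. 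Your self-correction away from the direct estimate via \eqref{eq:KnD(1,x)} and onto the cited endpoint result is exactly the right move and matches the paper's argument.
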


\begin{proof}
The convergence of $\sS_n^\delta\big(\varpi_{\b,\g}; f, (\xi,1)\big)$ holds if and only if 
$$
 \sup_{\xi \in \sph} \int_{\VV_0^{d+1}} \left| \sK_n^\delta (\varpi_{\b,\g}; (\xi,1), (y,s)) \right| \varpi_{\b,\g}(s) \d \s(y,s) 
$$ 
is bounded uniformly in $n$. By \eqref{eq:KdV=KdU} and the inequality \eqref{eq:Tbg-bdd}, this is bounded by 
$$
 \int_{\UU} \left| \Kb_n^\delta \big(U_{\g,\b+\f{d-1}{2}}; \one, z\big) \right| U_{\g, \b+\f{d-1}{2}}(z) \d z.
$$
For $\delta > \g+\b+\f{d-1}{2} +\f32$, the last integral is bounded uniformly in $n$ by Theorem \ref{thm:CesaroU} 
and by Remark \ref{rem:CesaroU}.
\end{proof}

\begin{thm} \label{thm:(C,d)V0}
Let $d\ge 2$, $\g \ge \b + \f{d-1}{2}$ and $\b \ge -\f12$. Let $f \in L^p(\VV_0^{d+1}, \varpi_{\b,\g})$ for 
$1 \le p < \infty$ and $f \in C(\VV_0^{d+1})$ for $p = \infty$. Then the Ces\`aro means $\sS_n^\delta\big(\varpi_{\b,\g}; f)$
satisfy 
\begin{enumerate}[ 1.]
\item if $\delta \ge 2 \b +\g+ d+3$, then $\sS_n^\delta (\varpi_{\b,g}; f)$ is nonnegative if $f$ is nonnegative; 
\item if $\delta > \b+ \g+ \f{d+2}2$, then $\sS_n^\delta (\varpi_{\b,g}; f)$ converge to $f$ in 
$L^p(\VV_0^{d+1}; \varpi_{\b,\g})$, $1 \le p < \infty$, and in $C(\VV_0^{d+1})$.   
\end{enumerate}
\end{thm}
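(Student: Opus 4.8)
The plan is to reduce everything to the corresponding statements on the parabolic domain $\UU$, using the pseudo convolution machinery already in place. First I would treat the convergence part (item 2). By Proposition \ref{prop:f*gV0}, since $\sS_n^\delta(\varpi_{\b,\g};f) = f \ast_{\VV_0} \Kb_n^\delta(U_{\g,\b+\f{d-1}{2}})$, we have the operator norm bound
$$
  \big\|\sS_n^\delta(\varpi_{\b,\g};f)\big\|_{L^p(\VV_0^{d+1};\varpi_{\b,\g})} \le c \,\|f\|_{L^p(\VV_0^{d+1};\varpi_{\b,\g})}
    \max_{t\in[0,1]} \int_{\UU}\big|\Kb_n^\delta(U_{\g,\b+\f{d-1}{2}};\one,z)\big|\,U_{\g,\b+\f{d-1}{2}}(z)\,\d z,
$$
so uniform boundedness of these operators follows from the $L^1$-boundedness of $\Kb_n^\delta(U_{a,b};\one,\cdot)$ with $a = \g$, $b = \b+\f{d-1}{2}$. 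By Remark \ref{rem:CesaroU}, this holds for $\delta > a+b+\f32 = \g + \b + \f{d-1}{2} + \f32 = \b+\g+\f{d+2}{2}$, which is exactly the stated threshold. Then convergence on a dense subset — the polynomials, which are dense in $L^p$ and in $C(\VV_0^{d+1})$ and on which $\sS_n^\delta$ eventually reproduces — combined with uniform operator boundedness gives convergence for all $f$ by the standard density argument.

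For item 1 (nonnegativity), I would again invoke the representation \eqref{eq:KdV=KdU}: $\sK_n^\delta(\varpi_{\b,\g}) = \sT_{\b,\g}\Kb_n^\delta(U_{\g,\b+\f{d-1}{2}})$. The operator $\sT_{\b,\g}$ is manifestly positivity-preserving — for $\b > -\f12$ it is an integral against the nonnegative measure $(1-z_1)^{\f{d-2}{2}}(1+z_1)^{\b-\f12}(1-z_2^2)^\b\,\d z$, and for $\b=-\f12$ it is just evaluation. Hence if $\Kb_n^\delta(U_{\g,\b+\f{d-1}{2}};\one,\cdot)$ is nonnegative on $\UU$, so is $\sK_n^\delta(\varpi_{\b,\g};(\xi,1),\cdot)$, and since $\sS_n^\delta(\varpi_{\b,\g};f)$ is an integral of $f$ against this kernel (times the nonnegative weight $\varpi_{\b,\g}$), it preserves nonnegativity. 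By Theorem \ref{thm:CesaroU}(1) with $a=\g$, $b=\b+\f{d-1}{2}$, nonnegativity of the $\UU$-kernel holds when $\delta \ge a+2b+4 = \g + 2\b + (d-1) + 4 = 2\b+\g+d+3$, matching the stated threshold. Here the hypothesis $\g \ge \b+\f{d-1}{2}$, i.e. $a \ge b$, is precisely the condition $a\ge b\ge 0$ needed to invoke the product formula underlying Theorem \ref{thm:CesaroU}; note $b = \b+\f{d-1}{2}\ge -\f12+\f12 = 0$ since $\b\ge-\f12$ and $d\ge 2$, with $b\ge 0$ automatic and $b>-\f12$ in any case.

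The main obstacle is really bookkeeping rather than a genuine difficulty: I must check that the positivity statement $\sS_n^\delta(\varpi_{\b,\g};f)\ge 0$ follows not just at the apex point $(\xi,1)$ but as a genuine statement about the operator on the whole paraboloid. This is where the symmetry and structure of the pseudo convolution matter — one writes
$$
  \sS_n^\delta(\varpi_{\b,\g};f,(x,t)) = \sb_{\b,\g}\int_{\VV_0^{d+1}} f(y,s)\,\sK_n^\delta(\varpi_{\b,\g};(x,t),(y,s))\,\varpi_{\b,\g}(s)\,\d\s(y,s),
$$
and the kernel here equals $\sT_{(y,s)}$-translation (in the parabolic-domain sense) of the apex kernel, which is nonnegative because it is built from the positivity-preserving operators applied to a nonnegative function. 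One should be slightly careful that the $L^1$-boundedness in Remark \ref{rem:CesaroU} is stated without the restriction $a\ge b\ge 0$, so item 2 needs no extra hypothesis on $\b,\g$ beyond $\b\ge-\f12$, $\g>-1$, whereas item 1 genuinely requires $\g\ge\b+\f{d-1}{2}$ because it rests on the product formula. Finally I would remark that the same reduction yields convergence of other summability methods, since everything passes through the single-point kernel on $\UU$.
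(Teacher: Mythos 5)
Your overall route is exactly the paper's: reduce both items to the kernel $\Kb_n^\delta(U_{\g,\b+\f{d-1}{2}})$ on $\UU$ via \eqref{eq:KdV=KdU} and Proposition \ref{prop:f*gV0}, then quote Theorem \ref{thm:CesaroU} and Remark \ref{rem:CesaroU}. Item 1 is handled correctly, including the check that positivity must hold for the kernel at every $(x,t)$, not just at the apex.

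There is, however, one concrete slip in item 2. Proposition \ref{prop:f*gV0} bounds the operator norm by
$\max_{t\in[0,1]}\int_{\UU}\bigl|\Kb_n^\delta\bigl(U_{\g,\b+\f{d-1}{2}};(\sqrt{t},t),z\bigr)\bigr|\,U_{\g,\b+\f{d-1}{2}}(z)\,\d z$,
i.e.\ the $L^1$ norm of the kernel based at the \emph{variable} boundary point $(\sqrt{t},t)\in\UU_0$, not at $\one$ as in your display. Remark \ref{rem:CesaroU} only controls the kernel at $\one$; to pass from $(\sqrt{t},t)$ to $\one$ one must invoke \eqref{eq:CesaroUat1}, which rests on the generalized translation $\CT_x$ and hence on the product formula, valid only for $a\ge b\ge 0$. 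Consequently your closing assertion that item 2 ``needs no extra hypothesis on $\b,\g$ beyond $\b\ge-\f12$, $\g>-1$'' is not justified by the results you cite: the hypothesis $\g\ge\b+\f{d-1}{2}$ (which gives $a\ge b\ge 0$ for $a=\g$, $b=\b+\f{d-1}{2}$) is used in item 2 as well, precisely at the step you elided. With the display corrected and \eqref{eq:CesaroUat1} inserted, your argument coincides with the paper's proof.
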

 
\begin{proof}
The positivity of $\sS_n^\delta\big(\varpi_{\b,\g}; f,)$ follows from the positivity of its kernel. Hence, the first item
is the consequence of \eqref{eq:KdV=KdU} and the positivity of $\sT_{\b,\g} \Kb_n^\delta (U_{\g, \b+\frac{d-1}{2}})$, 
where the latter follows from the positivity of $ \Kb_n^\delta (U_{\g, \b+\frac{d-1}{2}})$, which in turn follows from
the definition of $\sT_{\b,\g}$ and Theorem \ref{thm:CesaroU}; the last theorem requires $a \ge b \ge 0$
in $U_{a,b}$, which is satisfied in our case by the assumption $\g \ge \b + \f{d-1}{2}$ and $\b \ge - \f12$.

For the second item, it suffices to show that $L^p$ norm of $\sS_n^\delta (\varpi_{\b,g}; f)$ is uniformly bounded. 
By Proposition \ref{prop:f*gV0}, it is sufficient to show that
$$
  \max_{t \in [0,1]} \int_{\UU} \left| \Kb_n^\delta \left(U_{\g, \b+\frac{d-1}{2}}; \big(\sqrt{t},t \big), z \right)\right|
     U_{\g,\b+\f{d-1}{2}}(z)\d z 
$$
is bounded uniformly in $n$ when $\delta > \g + \b +\frac{d-1}{2} +\f32$. This follows immediately from  
\eqref{eq:CesaroUat1} and Theorem \ref{thm:CesaroU}.
\end{proof}

\section{Orthogonality and Fourier orthogonal series on the solid paraboloid}
\setcounter{equation}{0}

We consider orthogonal structure on the solid paraboloid of revolution
$$
\VV^{d+1}: =\left \{(x,t): \|x\|^2 \le t, \,\, 0 \le t \le 1, \,\, x \in \RR^d \right \}, 
$$ 
which is bounded by the surface $\VV_0^{d+1}$ and the hyperplane $t=1$ of $\RR^{d+1}$.
The $t$-section of the domain, $\{x :\|x\| \le \sqrt{t}\}$, is the ball of radius $\sqrt{t}$ in $\RR^d$. W
review the orthogonal structure on the unit ball first. 

\subsection{Classical orthogonal polynomials on the unit ball} 
For $\mu > -\f12$, let $\varpi_\mu$ be the weight function 
$$
 \varpi_\mu(x):= (1-\|x\|^2)^{\mu-\f12}, \qquad  \|x\| < 1.
$$
The classical orthogonal polynomials on the unit ball are orthogonal with respect to the inner product 
$$
  \la f,g\ra_{\mu} =b_\mu \int_{\BB^d} f(x) g(x) \varpi_\mu(x) \d x \quad \hbox{with}\quad 
    b_\mu = \frac{\Gamma(\mu+\f{d+1}{2})}{\pi^{\f{d}{2}}\Gamma(\mu+\f12)},
$$
where $b_\mu$ is the normalization constant of $\varpi_\mu$ so that $\la 1,1\ra=1$. 

Let $\CV_n(\BB^d,\varpi_\mu)$ be the space of orthogonal polynomials of degree $n$ with respect to 
$\varpi_\mu$. Then $\dim \CV_n(\BB^d, \varpi_\mu) = \binom{n+d-1}{n}$.  An orthogonal 
basis of $\CV_n(\BB^d, \varpi)$ can be given in terms of the Jacobi polynomials or spherical harmonics, 
see \cite[Chapter 5]{DX}, which we shall call the {\it basis with parity}, since its elements are polynomials 
that are even in each of its variables if $n$ is even and odd in each of its variables if $n$ is odd. The 
orthogonal polynomials of degree $n$ are eigenfunctions of a second order differential operator: 
for $u \in \CV_n(\BB^d, \varpi_\mu)$,  
\begin{equation}\label{eq:diffBall}
  \left( \Delta  - \la x,\nabla \ra^2  - (2\mu+d-1) \la x ,\nabla \ra \right)u = - n(n+2\mu+ d-1) u.
\end{equation}
Furthermore, these polynomials also satisfy an addition formula. Let $\Pb_n(\varpi_\mu;\cdot,\cdot)$ be 
the reproducing kernel of the space $\CV_n(\BB^d,\varpi_\mu)$. In terms of an orthonormal basis 
$\{P_{\kb}^n: |\kb| =n\}$ of $\CV_n(\BB^d,\varpi_\mu)$, the kernel can be written as 
$$
\Pb_n(\varpi_\mu;x,y) = \sum_{|\kb| = n} P_{\kb}^n(x) P_{\kb}^n(y). 
$$
The addition formula on the unit ball states \cite{X99}, for $\mu \ge 0$, 
\begin{align}\label{eq:additionB}
 \Pb_n(\varpi_\mu;x,y) = c_{\mu-\f12}
      \int_{-1}^1 Z_n^{\mu+\f{d-1}{2}} & \left(\la x,y\ra+ t \sqrt{1-\|x\|^2}\sqrt{1-\|y\|^2} \right) \\
       &  \times  (1-t^2)^{\mu-1} \d t, \notag
\end{align}
where the identity holds for $\mu =0$ under the limit 
\begin{equation}\label{eq:limit-int}
  \lim_{\mu \to 0}  c_{\mu-\f12} \int_{-1}^1 f(t) (1-t^2)^{\mu-1} \d t = \frac{f(1) + f(-1)}{2}.  
\end{equation}

\subsection{Orthogonal structure of the solid paraboloid}
For $\b >  - \f{d+1}{2}$, $\g > -1$ and $\mu> -\f12$,  we define a weight function $W_{\b,\g,\mu}$ 
on $\VV^{d+1}$,
$$
    W_{\b,\g,\mu}(x,t) :=  t^\b (1-t)^\g (t - \|x\|^2)^{\mu-\f12}, \qquad  (x,t) \in \VV^{d+1}. 
$$
With respect to this weight function, we define an inner product 
$$
  \la f,g\ra_{\b,\g,\mu} = \bb_{\b,\g,\mu} \int_{\VV^{d+1}} f(x,t) g(x,t) W_{\b,\g,\mu}(x,t) \d x\d t,
$$
where $\bb_{\b,\g,\mu} = b_\mu c_{\b+\mu + \frac{d-1}{2}, \g}$ with $b_\mu$ is the normalization constant
of $\varpi_\mu$ on the unit ball and $c_{\a,\g}$ is defined in \eqref{eq:JacobiNorm}. The weight function
$W_{\b,\g,\mu}$ can be written as 
\begin{equation}\label{eq:Wbgmu}
W_{\b,\g,\mu}(x,t) =  t^{\b+\mu-\f12} (1-t)^\g (1 - \|x'\|^2)^{\mu-\f12},\quad\hbox{with}\quad  x' = \frac{x}{\sqrt{t}} \in \BB^d.
\end{equation}
Hence, the value of the constant $\bb_{\b,\g,\mu}$ can be  verified by using the identity 
$$
  \int_{\VV^{d+1} } f(x,t) \d x \d t = \int_0^1 \int_{\|x\|^2 \le t} f(x,t) \d x \d t = 
     \int_0^1 t^{\frac{d}{2}} \int_{\BB^d} f\big(\sqrt{t} y, t\big) \d y \d t.
$$

For $n =0,1,2,\ldots,$ let $\CV_n(\VV^{d+1}, W_{\b,\g,\mu})$ denote the space of orthogonal polynomials 
of degree $n$ in $(x,t)$ variables with respect to the inner 
product 
$ \la \cdot, \cdot\ra_{\b,\g,\mu}$ on the paraboloid. Then $\dim \CV_n(\VV^{d+1}, W_{\b,\g,\mu}) = 
\binom{n+d}{n}$. An orthogonal basis of this space can be given in terms of the Jacobi polynomials
and the classical orthogonal polynomials on the unit ball \cite{OX1}. We will also need the norms of 
these orthogonal polynomials. 

\begin{prop}
Let $\b > - \f{d+1}{2}$ and $\g > -1$. Let $\{P_{\kb}^m: |\kb| = m, \, \kb\in \NN_0^d\}$ denote an orthonormal 
basis with parity of $\CV_m^d(\BB^d,\varpi_\mu)$. For $0 \le m \le n$, define 
\begin{equation} \label{eq:OPbasisVV}
 \Qb_{m,\kb}^n(x,t) = P_{n-m}^{(m+\b+\mu+\f{d-1}{2},\g)}(1-2 t) t^{\f{m}{2}} 
    P_{\kb}^m \left(\frac{x}{\sqrt{t} }\right), \quad |\kb| = m, \, 0\le m \le n.
\end{equation}
Then $\{\sQ_{m,\kb}^n: |\kb | = m, \, 0 \le m  \le n, \, \kb \in \NN_0^d\}$ is an 
orthogonal basis of $\CV_n(\VV^{d+1}, W_{\b,\g,\mu})$. Moreover, the norm square of $\sQ_{m,\ell}^n$ is given by 
\begin{equation} \label{eq:OPbasisVVNorm}
   \hb_{m,n}^{\b,\g,\mu}  = \la \Qb_{m,\kb}^n, \Qb_{m,\kb}^n \ra_{\b,\g,\mu} 
       =  \frac{c_{\b+\mu+\f{d-1}{2},\g}}{c_{m+\b+\mu+\f{d-1}{2},\g} } h_{n-m}^{(m+\b+\mu+\f{d-1}{2},\g)}.
\end{equation}
\end{prop}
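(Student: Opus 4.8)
The plan is to mirror the proof of the analogous statement on the surface $\VV_0^{d+1}$ given in Section 3, with the classical orthogonal polynomials on the ball taking the role played there by spherical harmonics. First I would check that each $\Qb_{m,\kb}^n$ is a genuine polynomial of total degree exactly $n$ in the $(x,t)$ variables. This is precisely where the hypothesis that $\{P_{\kb}^m: |\kb| = m\}$ be a \emph{basis with parity} enters: writing $P_{\kb}^m(y) = \sum_\alpha c_\alpha y^\alpha$, every occurring multi-index $\alpha$ has $|\alpha| \le m$ and $|\alpha| \equiv m \pmod 2$, so $t^{m/2} P_{\kb}^m(x/\sqrt{t}) = \sum_\alpha c_\alpha x^\alpha t^{(m-|\alpha|)/2}$ is a polynomial in $(x,t)$ of total degree $\le m$, and of degree exactly $m$ thanks to the terms with $|\alpha| = m$. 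Multiplying by $P_{n-m}^{(m+\b+\mu+\f{d-1}{2},\g)}(1-2t)$, of degree exactly $n-m$ in $t$, produces a polynomial of total degree exactly $n$. Hence $\Qb_{m,\kb}^n \in \CV_n(\VV^{d+1}, W_{\b,\g,\mu})$ once we know it is orthogonal to polynomials of lower degree.

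Next I would count indices. The set $\{(m,\kb): 0 \le m \le n,\ |\kb| = m\}$ has cardinality $\sum_{m=0}^n \dim \CV_m(\BB^d, \varpi_\mu) = \sum_{m=0}^n \binom{m+d-1}{m} = \binom{n+d}{n}$ by the standard hockey-stick identity, which is exactly $\dim \CV_n(\VV^{d+1}, W_{\b,\g,\mu})$. Thus it is enough to establish orthogonality and evaluate the norms; the spanning property is then automatic, exactly as for the basis on $\VV_0^{d+1}$.

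For orthogonality, I would use the representation \eqref{eq:Wbgmu} of $W_{\b,\g,\mu}$ together with the slicing formula $\int_{\VV^{d+1}} F \d x\,\d t = \int_0^1 t^{d/2} \int_{\BB^d} F(\sqrt{t}\,y, t)\,\d y\,\d t$. Setting $\a = \b+\mu+\f{d-1}{2}$ and substituting $y = x/\sqrt{t}$, the inner product $\la \Qb_{m,\kb}^n, \Qb_{m',\kb'}^{n'}\ra_{\b,\g,\mu}$ factors as $\bb_{\b,\g,\mu}$ times a ball integral $\int_{\BB^d} P_{\kb}^m(y) P_{\kb'}^{m'}(y)(1-\|y\|^2)^{\mu-\f12}\,\d y$ multiplied by a one-variable integral $\int_0^1 P_{n-m}^{(m+\a,\g)}(1-2t) P_{n'-m'}^{(m'+\a,\g)}(1-2t)\, t^{\a + (m+m')/2}(1-t)^\g\,\d t$ (the exponent of $t$ emerging as $d/2 + \b + \mu - \f12 + (m+m')/2 = \a + (m+m')/2$). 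Because orthogonal polynomials on the ball of different degrees are orthogonal, the ball factor vanishes unless $m = m'$ and $\kb = \kb'$, in which case it equals $1/b_\mu$. Once $m = m'$, the remaining integral is a Jacobi integral against the weight $t^{m+\a}(1-t)^\g$ on $[0,1]$; by the orthogonality relation $c_{\a,\b}\int_0^1 P_n^{(\a,\b)}(1-2t) P_k^{(\a,\b)}(1-2t) t^\a(1-t)^\b\,\d t = h_n^{(\a,\b)}\delta_{n,k}$ (itself obtained from the standard Jacobi orthogonality by the substitution $t \mapsto 1-2t$), it vanishes unless $n = n'$ and then equals $h_{n-m}^{(m+\a,\g)}/c_{m+\a,\g}$. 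Multiplying by $\bb_{\b,\g,\mu} = b_\mu c_{\a,\g}$ cancels the factor $b_\mu$ and leaves precisely $\hb_{m,n}^{\b,\g,\mu} = c_{\a,\g}\, h_{n-m}^{(m+\a,\g)}/c_{m+\a,\g}$, which is \eqref{eq:OPbasisVVNorm}.

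I do not expect a genuine obstacle here: the argument is a routine adaptation of the surface case. The only places that require attention are the degree/parity bookkeeping in the first step (which is what makes the basis with parity the right choice on the ball) and keeping track of the normalization constants, in particular the Jacobi-on-$[0,1]$ constant $c_{m+\a,\g}$ and the factorization $\bb_{\b,\g,\mu} = b_\mu c_{\a,\g}$.
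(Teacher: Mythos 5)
Your proof is correct and follows essentially the same route as the paper's: both reduce the inner product, via the substitution $y = x/\sqrt{t}$ and the slicing of $\VV^{d+1}$, to a product of a ball integral (handled by the orthonormality of $P_{\kb}^m$) and a Jacobi integral against $t^{\a+(m+m')/2}(1-t)^\g$ on $[0,1]$, with the parity of the ball basis guaranteeing that $\Qb_{m,\kb}^n$ is a polynomial of degree $n$. Your version is, if anything, slightly more explicit than the paper's on the degree/parity bookkeeping and the dimension count.
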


\begin{proof}
Using the parity of $P_{\kb}^m$, it is not difficult to see that $\Qb_{m,\kb}^n$ is a polynomial 
of degree $n$ in $(x,t)$ variables.
Let $\a = \b+\mu+\f{d-1}{2}$. Setting $y = x/\sqrt{t} \in \BB^d$, we obtain 
\begin{align*}
    \bb_{\b,\g,\mu}  \int_{\VV^{d+1} } & \Qb_{m,\kb}^n(x,t)  \Qb_{m',\kb'}^{n'}(x,t)W_{\b,\g,\mu}(x,t) \d x \d t  
     =  b_\mu  \int_{\BB^d}  P_\kb^m(y)P_{\kb'}^{m'}(y) \varpi_\mu(y) \d y \\
    &  \times c_{\a,\g} \int_0^1 P_{n-m}^{(m + \a,\g)}(1-2 t)P_{n'-m'}^{(m'+\a,\g)}(1-2 t) 
        t^{\frac{m+m'}{2} +\a} (1-t)^\g \d t.
\end{align*}
Since $P_\kb^m$ are orthonormal, it follows that  the second integral in the right-hand side is non-zero 
only when $m = m'$, from which the orthogonality of $\Qb_{m,\kb}^n$ and the formula for 
$\hb_{m,n}^{\b,\g,\mu}$ follow
from the corresponding properties of the Jacobi polynomials.   
\end{proof}

We know that orthogonal polynomials on the solid cone and hyperboloid are eigenfunctions of a second order 
linear differential operator with the eigenvalues depending only on the degree of the polynomials \cite{X19,X20}. 
In particular, this means that all polynomials of degree $n$ are eigenfunctions independent of the choice of bases.
In contrast, the orthogonal 
polynomials on the solid paraboloid, as those on the surface of the paraboloid, do not posses this property.
For polynomials $\Qb_{m,\kb}^n$, we can find a differential operator for which the eigenvalues depend 
on $n$ and $m$ but not on $\kb$, as seen in the following analogue of Proposition \ref{prp:pde}, where 
we assume $\b =0$. The latter assumption is consistent with $\b=-\f12$ in Proposition \ref{prp:pde} 
because $W_{\b,\g,\mu}$ contains the factor $t^{\b+\mu-\f12}$ when writing in the form \eqref{eq:Wbgmu} 
and, for $\mu =0$, $\varpi_0$ is the Chebyshev weight function on the unit ball, or the projection of the 
surface measure of $\SS^d$ onto $\BB^d$. 

\begin{prop}
Let $\b =0$, $\g > -1$ and $\mu > -\f12$. Then $\Qb_{m,\kb}^n$ in \eqref{eq:OPbasisVV} satisfies the differential 
equation 
\begin{align}\label{eq:diff-eqnVV}
 & \left[ t (1-t) \partial_t^2 + (1-t) \la x,\nabla_x\ra \partial_t + \frac{1}{4} (1-t)\Delta_x  \right. \\
      & \qquad \qquad \qquad  
      +\left.   \left(\mu+\tfrac{d+1}{2}\right)(1-t)\partial_t   - \frac{\g+1}{2} (2t \partial_t +  \la x,\nabla_x\ra) \right] u \notag \\
  & \qquad \qquad \qquad \qquad 
   = -  \left(n \big( n+\mu+ \g + \tfrac{d+1}{2}\big) - m \big(n + \mu+\tfrac{\g + d}{2}\big) \right)u.  \notag
\end{align}
\end{prop}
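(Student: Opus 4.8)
\emph{Proof strategy.} The plan is to follow the scheme of Proposition \ref{prp:pde}, with the Laplace--Beltrami operator on $\sph$ there replaced by the second order operator on the ball in \eqref{eq:diffBall}. Put $\a = \mu + \tfrac{d-1}{2}$ (the first Jacobi index in \eqref{eq:OPbasisVV} when $\b=0$) and write $\Qb_{m,\kb}^n(x,t) = f_{n,m}(t)\,P_\kb^m(y)$ with $y = x/\sqrt t \in \BB^d$ and $f_{n,m}(t) = P_{n-m}^{(m+\a,\g)}(1-2t)\,t^{m/2}$, exactly as in the proof of Proposition \ref{prp:pde}.

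The first step is the chain rule for the map $x\mapsto y = x/\sqrt t$ with $t$ kept as the second coordinate. Writing $\Lambda := \la y,\nabla_y\ra$ for the Euler operator in $y$, for $F(x,t) = \phi(t)\psi(y)$ one has $\la x,\nabla_x\ra F = \phi\,\Lambda\psi$, $\Delta_x F = \tfrac1t\phi\,\Delta_y\psi$, $\partial_t F = \phi'\psi - \tfrac1{2t}\phi\,\Lambda\psi$, and, iterating,
$$ \partial_t^2 F = \phi''\psi - \tfrac1t\phi'\Lambda\psi + \tfrac1{2t^2}\phi\,\Lambda\psi + \tfrac1{4t^2}\phi\,\Lambda^2\psi, \qquad \la x,\nabla_x\ra\partial_t F = \phi'\Lambda\psi - \tfrac1{2t}\phi\,\Lambda^2\psi; $$
the only inputs are the scaling invariance of $\Lambda$ and the identity $\partial_t y_i = -y_i/(2t)$.

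Substituting these into the operator on the left of \eqref{eq:diff-eqnVV} applied to $\phi\psi$ (with $\phi = f_{n,m}$, $\psi = P_\kb^m$) and collecting terms, I expect: (i) in the summand $-\tfrac{\g+1}{2}(2t\partial_t + \la x,\nabla_x\ra)$ the two $\Lambda\psi$ contributions cancel, leaving $-(\g+1)t\,\phi'\psi$; (ii) the remaining $\Delta_y\psi$, $\Lambda^2\psi$ and $\Lambda\psi$ terms assemble into $\tfrac{1-t}{4t}\phi\big(\Delta_y - \Lambda^2 - (2\mu+d-1)\Lambda\big)\psi$, i.e.\ $\tfrac{1-t}{4t}\phi$ times the ball operator of \eqref{eq:diffBall} written in the variable $y$; and (iii) the purely $t$-dependent remainder is $t(1-t)\phi'' + \big((\a+1) - (\a+\g+2)t\big)\phi'$. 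Since $P_\kb^m\in\CV_m(\BB^d,\varpi_\mu)$, \eqref{eq:diffBall} turns the ball operator into the scalar $-m(m+2\mu+d-1) = -m(m+2\a)$, so that the left side of \eqref{eq:diff-eqnVV} becomes
$$ \Big( t(1-t)\phi'' + \big((\a+1) - (\a+\g+2)t\big)\phi' - \tfrac{m(m+2\a)(1-t)}{4t}\phi \Big)\, P_\kb^m . $$
This is the one-variable operator already treated in the proof of Proposition \ref{prp:pde} (now with $\a = \mu+\tfrac{d-1}{2}$): absorbing the factor $t^{m/2}$ in $f_{n,m}$ and invoking the Jacobi equation \eqref{eq:Jacobi-diff} for $P_{n-m}^{(m+\a,\g)}(1-2t)$ reduces it to multiplication by $-\big((n-m)(n+\a+\g+1) + \tfrac12 m(\g+1)\big)$, which after substituting $\a = \mu+\tfrac{d-1}{2}$ simplifies to $-\big(n(n+\mu+\g+\tfrac{d+1}{2}) - m(n+\mu+\tfrac{\g+d}{2})\big)$, the eigenvalue in \eqref{eq:diff-eqnVV}.

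The main obstacle is step (ii): one must check that the coefficient of $\Lambda^2\psi$ (which comes only from $t(1-t)\partial_t^2$ and $(1-t)\la x,\nabla_x\ra\partial_t$), of $\Delta_y\psi$ (only from $\tfrac14(1-t)\Delta_x$) and of $\Lambda\psi$ (from $t(1-t)\partial_t^2$ and $(\mu+\tfrac{d+1}{2})(1-t)\partial_t$, after the $\phi'\Lambda\psi$ pieces cancel) occur in exactly the proportions $(-1,\,1,\,-(2\mu+d-1))$ against $\tfrac{1-t}{4t}\phi$ that rebuild the ball operator of \eqref{eq:diffBall}; this is precisely what dictates the particular combination of terms in \eqref{eq:diff-eqnVV} and the normalization $\b = 0$. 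The final reduction of the $t$-ODE to the Jacobi equation is routine and identical to its counterpart in Proposition \ref{prp:pde}.
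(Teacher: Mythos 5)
Your argument is correct, and I have checked the computations you flag as the main risk: with $F=\phi(t)\psi(y)$, $y=x/\sqrt t$, the coefficients of $\Lambda^2\psi$, $\Delta_y\psi$ and $\Lambda\psi$ produced by the operator in \eqref{eq:diff-eqnVV} are indeed $-\tfrac{1-t}{4t}\phi$, $\tfrac{1-t}{4t}\phi$ and $-\tfrac{1-t}{4t}(2\mu+d-1)\phi$ (the last after the cancellation of the $\phi'\Lambda\psi$ terms between $t(1-t)\partial_t^2$ and $(1-t)\la x,\nabla_x\ra\partial_t$), so the ball operator of \eqref{eq:diffBall} is reassembled exactly; the residual $t$-ODE is $t(1-t)\phi''+((\a+1)-(\a+\g+2)t)\phi'-\tfrac{m(m+2\a)(1-t)}{4t}\phi$, and your eigenvalue $-\big((n-m)(n+\a+\g+1)+\tfrac12 m(\g+1)\big)$ is the correct value, matching the stated right-hand side of \eqref{eq:diff-eqnVV} after substituting $\a=\mu+\tfrac{d-1}{2}$. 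Your route differs from the paper's in its bookkeeping: the paper factors $\Qb_{m,\kb}^n=g(t)H(x,t)$ with $H(x,t)=t^{m/2}P_\kb^m(x/\sqrt t)$ kept as a polynomial in the original variables, and drives the computation with the Euler homogeneity identities $(2t\partial_t+\la x,\nabla_x\ra)H=mH$ and its derivative analogue, converting the ball equation \eqref{eq:diffBall} into an identity for $H$ and showing the leftover bracket collapses to $-\tfrac14\Delta_x H$; you instead attach $t^{m/2}$ to the radial factor and conjugate the whole operator by the change of variables $(x,t)\mapsto(y,t)$ via the chain rule. Your version is more mechanical but arguably more transparent, and it has the advantage of reducing literally to the same one-variable Jacobi computation already carried out in the proof of Proposition \ref{prp:pde}; the paper's version avoids second-order chain-rule expansions at the cost of the two homogeneity lemmas. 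Both are complete proofs of the same statement.
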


\begin{proof}
Let  $\a = \b+ \mu + \frac{d-1}{2}$. Set $g(t) = P_{n-m}^{(m+\a,\g)}(1-2t)$ and 
$H(x,t) = t^{\frac{m}{2}} P_{\kb}^m(\frac{x}{\sqrt{t}})$, so that $\Qb_{m,\kb}^n(x,t) = g(t)H(x,t)$. 
Since $H(x,s^2)$ is a homogeneous polynomial of degree $m$ in $(x,s)$ and, for $t = s^2$, 
$2 \sqrt{t} \frac{\partial}{\partial t}  = \frac{\partial}{\partial s}$, it follows by Euler's formula for 
homogenous polynomials that
\begin{equation}\label{eq:DtH}
  \left(2 t  \frac{\partial}{\partial t} + \la x, \nabla_x \ra \right)H = m H.
\end{equation}
Furthermore, since $\sqrt{t} \frac{\partial}{\partial t} H(x,t) = \frac{\partial}{\partial s} H(x,s^2)$ is a 
homogeneous polynomial of degree $m-1$ in $(x,s^2)$, applying \eqref{eq:DtH} on 
$\sqrt{t} \frac{\partial}{\partial t} H$ and simplifying gives 
\begin{equation}\label{eq:D2tH}
  2 t  \frac{\partial^2 H}{\partial t^2} + \la x, \nabla_x \ra \frac{\partial H}{\partial t}= (m-2)\frac{\partial H}{\partial t}.
\end{equation}
Let $u = \Qb_{m,\kb}^n$. Then $u = g(t) H(x,t)$. Taking derivative by the chain rule, 
a straightforward computation, using \eqref{eq:DtH} once, shows that
\begin{align*}
  t(1-t) \partial_{tt} u + (1-t) \la x,\nabla_x \ra \partial_t u
      =& \big( t(1-t) g''(t)  + m (1-t) g'(t) \big) H \\
          & + (1-t) g(t) \left( t  \frac{\partial^2 H}{\partial t^2} 
            + \la x, \nabla_x \ra \frac{\partial H}{\partial t} \right).
\end{align*}
The Jacobi polynomial satisfies the differential equation, so that $g$ satisfies  \eqref{eq:Jacobi-diff} with
$\a$ replaced by $\a + m$  and $n$ replaced by $n-m$, which leads to 
\begin{align}\label{eq:4.5}
    t(1-t) \partial_{tt} u +&\, (1-t) \la x,\nabla_x \ra \partial_t u + (1+\a -(\a+\g+2) t) \partial_t u \\
      = & - (n-m)(n+\a+\g+1) u - (\g+1) t g(t)  \frac{\partial H}{\partial t}  \notag \\
          & + (1-t) g(t) \left[ t  \frac{\partial^2 H}{\partial t^2} 
            + \la x, \nabla_x \ra \frac{\partial H}{\partial t} + (1+\a)  \frac{\partial H}{\partial t}  \right]. \notag
\end{align}
The polynomial $P_{\kb}^m$ satisfies a second order differential equation \eqref{eq:diffBall} with $n$ 
replaced by $m$, from which follows that $H$ satisfies
$$
   \left(t \Delta_x - \la x,\nabla_x\ra^2 - (2\mu + d-1) \la x,\nabla_x \ra\right) H = -m(m+2\mu+d-1)H. 
$$
Now, applying \eqref{eq:DtH} and \eqref{eq:D2tH}, the square bracket in the right-hand side of \eqref{eq:4.5}
satisfies
\begin{align*}
[ ...] \, &= \frac1{4t} \left( \la x,\nabla\ra + 2\a+m \right) \left(m - \la x ,\nabla_x \ra \right)H\\
          & =  \frac1{4t} \left(- \la x,\nabla_x\ra^2 - 2 \a \la x,\nabla_x \ra+ m(m+2 \a) \right)H
           = - \frac14 \Delta_x H, 
\end{align*}
where in the last step we have used $2\a = 2\mu + d-1$ for $\b =0$ and the differential equation satisfied
by $H$. Substituting this into \eqref{eq:4.5} and using \eqref{eq:DtH} one more time, the resulted identity 
is simplified to give \eqref{eq:diff-eqnVV}. 
\end{proof}

Next we consider the reproducing kernel of $\CV_n(\VV^{d+1}, W_{\b,\g,\mu})$. In terms of the basis
\eqref{eq:OPbasisVV}, the kernel is given by 
\begin{align*}
  \Pb_n\left(W_{\b,\g,\mu}; (x,t),(y,s)\right) = \sum_{m=0}^n \sum_{|\kb| =m} 
        \frac{ \Qb_{m,\kb}^n(x,t)  \Qb_{m,\kb}^n(y,s)}{\hb_{m,n}^{\b,\g,\mu}}.
\end{align*}   
With the help of the addition formula for the orthogonal polynomials on the unit ball, we can express this 
kernel in terms of the reproducing kernel $\Pb_n(U_{a,b}; \cdot,\cdot)$ on the parabola domain $\UU$. 

\begin{thm}
Let $d \ge 2$, $\mu \ge 0$ and $\g> -1$. Let $\a = \b+\mu+\f{d-1}{2}$. Then, for $(x,t) \in \VV^{d+1}$, 
$(y,s) \in \VV^{d+1}$ and $\b > 0$,
\begin{align} \label{eq:reprodSV1}
  \Pb_n & \left(W_{\b,\g,\mu}; (x,t),(y,s)\right)  = 
 c  \int_{[-1,1]^3} 
   \Pb_n \left(U_{\g, \a}; \big(\sqrt{t}, t\big), \big(\sqrt{s} \xi(x, t, y,s; z, u),s\big) \right) \\
        &\qquad \qquad\qquad \times (1-z_1)^{\mu+\frac{d-1}{2}} (1+z_1)^{\b-1}  (1-z_2^2)^{\b-\f12} 
           (1-u^2)^{\mu-1} \d z \d u,    \notag
\end{align}
where $c = c_{ \mu+ \frac{d-1}{2},\b-1} c_{\b}  c_\mu$ and 
\begin{align*}
  & \xi(x, t, y, s; z, u)  = \tfrac{1-z_1}{2 } \xi_0(x,t,y,s;u)+  \tfrac{1+z_1}2 z_2,\\
   & \quad \hbox{with} \quad   \xi_0(x, t, y,s; u) = \frac{1}{\sqrt{s t}}  \left( \la x,y\ra  + u \sqrt{t -\|x\|^2}\sqrt{s- \|y\|^2} \right);
\end{align*}
furthermore, for $\b = 0$, 
\begin{align} \label{eq:reprodSV2}
 \Pb_n& \left(W_{0,\g,\mu}; (x,t),(y,s)\right) \\
  & =   c_\mu \int_{-1}^1  
   \Pb_n\left(U_{\g,\mu+\frac{d-1}{2}}; \big(\sqrt{t}, t\big), \big(\sqrt{s} \xi_0(x, t, y,s; u),s\big) \right) (1- u^2)^{\mu-1} \d u.
    \notag
\end{align}
In both cases, the identity holds for $\mu = 0$ under the limit \eqref{eq:limit-int}.
\end{thm}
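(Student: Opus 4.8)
The argument runs in exact parallel with the proof of \eqref{eq:reprodV1}--\eqref{eq:reprodV2} for the surface of the paraboloid, the only change being that the addition formula \eqref{eq:additionF} for spherical harmonics is replaced by the addition formula \eqref{eq:additionB} for the classical orthogonal polynomials on the unit ball. The plan is to expand the reproducing kernel over the orthogonal basis \eqref{eq:OPbasisVV}, using the norms \eqref{eq:OPbasisVVNorm}, and to group the summands with $|\kb| = m$ fixed. Writing $x' = x/\sqrt t$ and $y' = y/\sqrt s$ (which lie in $\BB^d$ since $\|x\|^2\le t$ and $\|y\|^2\le s$), and using that the reproducing kernel $\Pb_m(\varpi_\mu;\cdot,\cdot)$ of $\CV_m(\BB^d,\varpi_\mu)$ is independent of the chosen orthonormal basis, one gets
\[
 \Pb_n\left(W_{\b,\g,\mu}; (x,t),(y,s)\right) = \sum_{m=0}^n \frac{P_{n-m}^{(\a+m,\g)}(1-2t)\,P_{n-m}^{(\a+m,\g)}(1-2s)}{\hb_{m,n}^{\b,\g,\mu}}\, t^{\f m2}s^{\f m2}\,\Pb_m(\varpi_\mu; x',y'),
\]
with $\a = \b+\mu+\f{d-1}{2}$.

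Next, I would apply \eqref{eq:additionB} to $\Pb_m(\varpi_\mu; x', y')$, which is legitimate for $\mu\ge 0$, the case $\mu=0$ being read through the limit \eqref{eq:limit-int}. The key elementary computation is that, since $\|x'\|^2=\|x\|^2/t$,
\[
 \la x',y'\ra + u\sqrt{1-\|x'\|^2}\sqrt{1-\|y'\|^2} = \frac{1}{\sqrt{st}}\left(\la x,y\ra + u\sqrt{t-\|x\|^2}\sqrt{s-\|y\|^2}\right) = \xi_0(x,t,y,s;u),
\]
and by Cauchy--Schwarz $\xi_0\in[-1,1]$, so $(\sqrt s\,\xi_0,s)\in\UU$. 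After interchanging the finite $m$-sum with the $u$-integral, the summand over $m$ now carries the factor $Z_m^{\mu+\f{d-1}{2}}(\xi_0)$.

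For $\b=0$ we have $\a=\mu+\f{d-1}{2}$, and by \eqref{eq:OPbasisVVNorm} the coefficient $1/\hb_{m,n}^{0,\g,\mu}$ equals $\f{c_{m+\a,\g}}{c_{\a,\g}}\cdot\f1{h_{n-m}^{(m+\a,\g)}}$, which is precisely the coefficient occurring in \eqref{eq:PnUab} for $U_{\g,\a}$; hence the $m$-sum collapses to $\Pb_n(U_{\g,\a};(\sqrt t,t),(\sqrt s\,\xi_0,s))$ upon setting $x_2=t$, $y_2=s$, $y_1=\sqrt s\,\xi_0$ there, which gives \eqref{eq:reprodSV2}. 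For $\b>0$ the zonal index $\mu+\f{d-1}{2}$ falls short of $\a$ by $\b$, so before identifying the $m$-sum I would first raise it via \eqref{eq:ZtoZ} with $\l=\mu+\f{d-1}{2}$ and $\s=\b$; this writes $Z_m^{\mu+\f{d-1}{2}}(\xi_0)$ as a double integral over $(z_1,z_2)\in[-1,1]^2$ of $Z_m^{\a}\big(\tfrac{1-z_1}{2}\xi_0+\tfrac{1+z_1}{2}z_2\big)$ against $(1-z_1)^{\mu+\f{d-1}{2}}(1+z_1)^{\b-1}(1-z_2^2)^{\b-\f12}$. Since $\tfrac{1-z_1}{2}\xi_0+\tfrac{1+z_1}{2}z_2=\xi(x,t,y,s;z,u)$ still lies in $[-1,1]$, substituting, interchanging the finite sum with all three integrals, and applying \eqref{eq:PnUab} with $y_1=\sqrt s\,\xi$ identifies the $m$-sum as $\Pb_n(U_{\g,\a};(\sqrt t,t),(\sqrt s\,\xi,s))$. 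Collecting the constants released by \eqref{eq:additionB} and \eqref{eq:ZtoZ} yields the normalization $c$ and the measure displayed in \eqref{eq:reprodSV1}, and the $\mu\to0$ limit in both formulas follows from \eqref{eq:limit-int}.

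There is no conceptual obstacle: the proof is the ball analogue of that of \eqref{eq:reprodV1}. The part that needs care is the bookkeeping of constants -- one must check that the factors $c_{m+\a,\g}/c_{\a,\g}$ released by \eqref{eq:OPbasisVVNorm} reproduce exactly the coefficients in the boundary-kernel formula \eqref{eq:PnUab}, so that the full $m$-sum genuinely collapses to a single $\UU$-kernel -- together with the verification that the nested substitutions $\xi_0\mapsto\xi$ keep every argument inside $[-1,1]$, hence every evaluation point inside $\UU$, so that \eqref{eq:PnUab} applies verbatim. A minor additional point is that \eqref{eq:additionB} holds only for $\mu\ge0$, with $\mu=0$ interpreted through \eqref{eq:limit-int}, which is exactly why the theorem records the $\mu=0$ case separately.
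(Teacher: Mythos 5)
Your proposal is correct and follows essentially the same route as the paper's proof: expand the kernel over the basis \eqref{eq:OPbasisVV}, apply the addition formula \eqref{eq:additionB} on the ball to produce $Z_m^{\mu+\f{d-1}{2}}(\xi_0)$, identify the $m$-sum with $\Pb_n(U_{\g,\a})$ via \eqref{eq:PnUab} when $\b=0$, and first raise the zonal index by \eqref{eq:ZtoZ} with $\l=\mu+\f{d-1}{2}$, $\s=\b$ when $\b>0$. The constant bookkeeping and the checks that $\xi_0$ and $\xi$ stay in $[-1,1]$ are exactly the points the paper also verifies, so nothing is missing.
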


\begin{proof}
Since $\|x\| \le t$ and $\|y\| \le s$, we see that $|\xi_0(x,t,,y,s,u)|\le 1$ by the Cauchy inequality. Consequently,
$|\xi(x,t,y,s;z,u)|\le 1$, so that both variables in $\Pb_n( U_{\g,\b+\frac{d-1}{2}})$ are elements of $\UU$. 

By \eqref{eq:OPbasisVV} and the assumption that $P_\kb^m$ is orthonormal, it follows from the addition
formula \eqref{eq:additionB} on the unit ball that, with $\a = \b+\mu+\frac{d-1}{2}$, 
\begin{align*}
   \Pb_n(W_{\b,\g,\mu}; (x,t),(y,s))& =  c_\mu \int_{-1}^1 \sum_{m=0}^n 
            \frac{P_{n-m}^{(\a+m,\g)}(1-2 t)P_{n-m}^{(\a+m,\g)}(1-2 s)} {\hb_{m,n}^{\b,\g,\mu}}  t^{\f{m}{2}} s^{\f{m}{2}} \\
       &\times Z_m^{\mu+\f{d-1}{2}} \left(\frac{\la x,y\ra}{\sqrt{st}} + u \sqrt{1-\frac{\|x\|^2}{t}}\sqrt{1-\frac{\|y\|^2}{s}}\right) 
         (1- u^2)^{\mu-1} \d u.
\end{align*}   
If $\b = 0$, then $\a = \mu + \frac{d-1}{2}$, so that the sum under the integral sign can be identified with 
$\Pb_n(U_{\g, \a})$ by \eqref{eq:PnUab} with $x_2= t$, $y_2 = s$ and  $y_1 =  \xi_0(x, t, y,s; u)$. This proves
\eqref{eq:reprodSV2}. For $\b > 0$, we increase the value of the index of $Z_m^b$ from $\mu+\f{d-1}{2}$ 
to $\a = \mu + \b + \f{d-1}{2}$ by \eqref{eq:ZtoZ} with $\l = \mu+ \frac{d-1}{2}$ and $\s = \b$, so that the 
sum under the integral sign becomes 
\begin{align*}
   &  c_{ \mu+ \frac{d-1}{2},\b-1}  c_{\b}  \int_{-1}^1\int_{-1}^1 
    \sum_{m=0}^n \frac{P_{n-m}^{(\a+m,\g)}(1-2 t)P_{n-m}^{(\a+m,\g)}(1-2 s)} {\hb_{m,n}^{\b,\g,\mu}} 
         t^{\f{m}{2}} s^{\f{m}{2}} \\
      & \qquad \times Z_m^{\a} \big( \tfrac{1-z_1}{2 \sqrt{s}} \xi_0(x,t,y,s;u)+ \tfrac{1+z_1}2 z_2\big)
       (1-z_1)^{\mu+\frac{d-1}{2}} (1+z_1)^{\b-1}  (1-z_2^2)^{\b-\f12}  \d z \\
     &   =\, c_{ \mu+ \frac{d-1}{2},\b-1} c_{\b}  \int_{-1}^1\int_{-1}^1 \Pb_n \left(U_{\g, \a};
            \big(\sqrt{t}, t\big), \big(\xi(x, t, y,s; z, u),s\big) \right) \\
        &\qquad \qquad\qquad\qquad\qquad \times (1-z_1)^{\mu+\frac{d-1}{2}} (1+z_1)^{\b-1}  (1-z_2^2)^{\b-\f12}  \d z,        
\end{align*}
where the second step follows from \eqref{eq:OPbasisVVNorm} and \eqref{eq:PnUab}. Putting the
two displayed identities together, we have proved \eqref{eq:reprodSV1}. 
\end{proof}

If we allow $d =1$, then $\VV^2$ with $W_{0,\g,\mu}(x_1,x_2)$ should just be the domain $\UU$ with
$U_{\g, \mu}(x_1,x_2)$. We know that $\Pb_n(U_{\a,b})$ does not have a closed formula except in the
case that one of its variable is $\one = (1,1)$. For $d \ge 2$, we can deduce accordingly a closed formula 
for $\Pb_n\big(W_{\b,\g,\mu})$ on the hyperplane $t = 1$ of $(x,t) \in \VV^{d+1}$. We state this formula
for the kernel of the partial sum operator 
$$
\Kb_n\big(W_{\b,\g,\mu}, (x,t), (y,s)\big) = \sum_{m=0}^n \Pb_m\big(W_{\b,\g,\mu}; (x,t), (y,s)\big)
$$
by using the closed formula of $\Kb_n(U_{a,b}; \one, \cdot)$ in Theorem \ref{thm:Kn(1,x)}. We again state
the result only for the case $\b = 0$, for which the formula  is relatively simple. 

\begin{cor}
Let $d \ge 2$, $\g > -1$ and $\mu \ge 0$. Let $\tau = \mu+\f{d-1}{2}$. Then, for $x \in \BB^d$, 
\begin{align*}
  \Kb_n \big(W_{0,\g,\mu}; &  (x,1), (y,s) \big)   =  \frac{P_n^{(\g+\tau+1,\tau)}(1)}
     {h_n^{(\g+\tau+1,\tau)}}
     c_{\g+\tau+1,\tau} c_\mu \\
  & \times \int_{[-1,1]^2} P_n^{(\g+\tau+1,\tau)}\big(z'(x, y, s,u, v)\big)
     w_{\g+\tau1,\tau}(v) (1-u^2)^{\mu-1} \d u \d v, 
\end{align*}
where $z'(x, y, s,u, v) = z\left( \big(\sqrt{s} \xi_0(x,1,y,s; u), s \big), v\right)$ or 
$$
 z'(x, y, s, u, v) =  1 - (1-v^2) \left(1- \sqrt{s} \xi_0(x,1,y,s; u)\right) - \frac12 (1-v)^2 (1- s).
$$
\end{cor}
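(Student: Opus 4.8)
The plan is to compose the two integral representations that the statement already points to: the reproducing-kernel identity \eqref{eq:reprodSV2} for the solid paraboloid in the case $\b=0$, and the closed formula \eqref{eq:Kn(1,x)} of Theorem~\ref{thm:Kn(1,x)} for the partial-sum kernel $\Kb_n(U_{a,b})$ on the parabolic domain $\UU$ when one of its arguments sits at the corner $\one=(1,1)$.

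First I would sum \eqref{eq:reprodSV2} over $m=0,1,\dots,n$. Since $\Kb_n$ is by definition the finite sum of the $\Pb_m$ and the $u$-integration in \eqref{eq:reprodSV2} is a fixed linear operation, the summation passes under the integral sign, yielding
$$
  \Kb_n\big(W_{0,\g,\mu}; (x,t),(y,s)\big) = c_\mu \int_{-1}^1 \Kb_n\big(U_{\g,\tau}; (\sqrt{t},t), (\sqrt{s}\,\xi_0(x,t,y,s;u),s)\big) (1-u^2)^{\mu-1}\,\d u,
$$
where $\tau=\mu+\f{d-1}{2}$ coincides with the index $\a$ of \eqref{eq:reprodSV2} when $\b=0$, and $\Kb_n(U_{\g,\tau};\cdot,\cdot)=\sum_{m=0}^n\Pb_m(U_{\g,\tau};\cdot,\cdot)$. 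Next I would set $t=1$, so that $(\sqrt{t},t)=\one$, and invoke Theorem~\ref{thm:Kn(1,x)} with $a=\g$ and $b=\tau$; its hypotheses $a>-1$ and $b>-\f12$ hold because $\mu\ge 0$ and $d\ge 2$ force $\tau\ge\f12$, and one has $a+b+1=\g+\tau+1$. Substituting $y_1=\sqrt{s}\,\xi_0(x,1,y,s;u)$ and $y_2=s$ into the function $z$ appearing in \eqref{eq:Kn(1,x)}, and using $\sqrt{s}\,\xi_0(x,1,y,s;u)=\la x,y\ra+u\sqrt{1-\|x\|^2}\sqrt{s-\|y\|^2}$, one reads off $z\big((y_1,y_2),v\big)=z'(x,y,s,u,v)$. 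Feeding the resulting single-integral formula for $\Kb_n(U_{\g,\tau};\one,\cdot)$ back into the displayed identity produces the claimed double integral over $[-1,1]^2$, with the outer $\d u$ carrying the weight $(1-u^2)^{\mu-1}$ from \eqref{eq:reprodSV2} and the inner $\d v$ carrying $w_{\g+\tau+1,\tau}$ from Theorem~\ref{thm:Kn(1,x)}, while the prefactor $\frac{P_n^{(\g+\tau+1,\tau)}(1)}{h_n^{(\g+\tau+1,\tau)}} c_{\g+\tau+1,\tau} c_\mu$ assembles from the two constants.

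I do not expect a genuine obstacle; the proof is the bookkeeping of matching indices and arguments. The two minor points of care are: interpreting the $\mu=0$ case through the limit \eqref{eq:limit-int}, under which the $(1-u^2)^{\mu-1}\,\d u$ integration degenerates to the average of the integrand at $u=\pm1$ and this interpretation simply propagates through the argument; and observing that Theorem~\ref{thm:Kn(1,x)} is valid for all $a>-1$, $b>-\f12$ without a constraint such as $a\ge b\ge0$, so that the corollary incurs no restriction on $\g,\mu,d$ beyond those already built into \eqref{eq:reprodSV2}. One could equally state and prove the analogous formula for $\b>0$ by starting from \eqref{eq:reprodSV1}; the additional $(z_1,z_2)$-integration there is handled in exactly the same manner, at the price of a more involved explicit expression for $z'$.
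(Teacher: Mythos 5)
Your proposal is correct and follows exactly the route the paper intends: the paper's entire proof is the remark that the corollary follows from \eqref{eq:reprodSV2} and \eqref{eq:Kn(1,x)}, and you have simply supplied the bookkeeping (summing \eqref{eq:reprodSV2} over degrees, setting $t=1$ so the first argument becomes $\one$, and applying Theorem~\ref{thm:Kn(1,x)} with $a=\g$, $b=\tau$). Your checks of the hypotheses of Theorem~\ref{thm:Kn(1,x)} and of the identification $z\big((\sqrt{s}\,\xi_0,s),v\big)=z'(x,y,s,u,v)$ are all accurate.
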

 
This is a corollary of \eqref{eq:reprodSV2} and \eqref{eq:Kn(1,x)}. A more involved closed form formula
for $\b > 0$ can be written down using \eqref{eq:reprodSV1} and \eqref{eq:Kn(1,x)}.

\subsection{Summability of Fourier orthogonal series}
Denote by $\proj_n(W_{\b,\g,\mu})$ the orthogonal projection operator 
$$
 \proj_n(W_{\b,\g,\mu}): L^2(\VV^{d+1}; W_{\b,\g,\mu})
   \mapsto \CV_n(\VV^{d+1}, W_{\b,\g,\mu}).
$$
In terms of the reproducing kernel $\Pb_n(W_{\b,\g,\mu})$ of $\CV_n(\VV^{d+1}, W_{\b,\g,\mu})$, we can write
$$
 \proj_n (W_{\b,\g,\mu}; f) = \sb_{\b,\g,\mu} \int_{\VV_0^{d+1}} f(y,s) \Pb_n\big(W_{\b,\g,\mu};
      (x,t), (y,s)\big) W_{\b,\g,\mu} (x,t)\d y \d s.
$$
For $f\in L^2(\VV^{d+1}; W_{\b,\g,\mu})$, the Fourier orthogonal series of $f$ on $\VV^{d+1}$ is defined by
$$
          f = \sum_{n=0}^\infty \proj_n ( W_{\b,\g,\mu}; f).
$$

Recall that $\UU_0$ denotes the curved portion of the boundary of the parabola domain $\UU$. 
In analogue to the Definition \ref{def:Tbg-V0}, we give the following definition: 

\begin{defn}
Let $d \ge 2$, $\mu \ge 0$, $\b \ge 0$ and $\g> -1$. Set $\a = \b+\mu+\f{d-1}{2}$. 
Let $g: \UU_0\times \UU \mapsto \RR$ such that, for each $t \in [0,1]$, the function 
$x \mapsto g((\sqrt{t},t); x)$ is in $L^1\big(\UU;U_{\g, \a}\big)$. 
For $(x,t)  \in \VV^{d+1}$ and $(y,s)  \in \VV^{d+1}$, define for $\b > 0$
\begin{align*}
   \Tb_{\b,\g,\mu} g\big( (x,t), (y,s)\big):= & 
       c \int_{[-1,1]^3} 
          g \left(\big(\sqrt{t}, t\big), \big(\sqrt{s} \xi(x, t, y,s; z, u),s\big) \right) \\
        &  \times (1-z_1)^{\mu+\frac{d-1}{2}} (1+z_1)^{\b-1}  (1-z_2^2)^{\b-\f12} 
           (1-u^2)^{\mu-1} \d z \d u,    \notag
\end{align*}
where $c = c_{ \mu+ \frac{d-1}{2},\b-1} c_{\b}  c_\mu$, and define for $\b =0$,
$$
    \Tb_{0,\g,\mu} g\big( (x,t), (y,s)\big) :=  c_\mu \int_{-1}^1  
  g\left(\big(\sqrt{t}, t\big), \big(\sqrt{s} \xi_0(x, t, y,s; u),s\big) \right) (1- u^2)^{\mu-1} \d u.
$$
In both cases the definition holds under the limit \eqref{eq:limit-int} when $\mu= 0$. 
\end{defn}

By \eqref{eq:reprodSV1} and  \eqref{eq:reprodSV2}, the definition $\Tb_{\b,\g,\mu}$ is motivated by 
\begin{equation}\label{eq:PnVV=PnU}
   \Pb_n (W_{\b,\g,\mu}; (x,t), (y,s)) =  \Tb_{\b,\g,\mu} \Pb_n (U_{\g, \b+\mu+\frac{d-1}{2}}) \big( (x,t), (y,s)\big).
\end{equation}
For each fixed $(x,t) \in \VV^{d+1}$, this is a bounded operator as seen below. 

\begin{prop}
Let $\b \ge 0$, $\mu \ge 0$ and $\g > -1$. Let $g: \UU_0\times \UU \mapsto \RR$ such that, for each $t \in [0,1]$, 
the function $g\big((\sqrt{t},t); \cdot\big)$ is in $L^1\big(\UU;U_{\g, \a}\big)$ with $\a =\b+\mu+\frac{d-1}{2}$.
Then, for $(x,t) \in \VV^{d+1}$, 
\begin{align}\label{eq:Tbg-bddVV}
   \int_{\VV^{d+1}}  \left|\Tb_{\b,\g,\mu} g\big((x,t), (y,s) \big) \right|& W_{\b,\g,\mu}(y,s) \d y \d s \\
    \le c  \int_{\UU} & \left|g \left(\big(\sqrt{t},t\big), z\right)\right| U_{\g,\a}(z) \d z. \notag
\end{align}
\end{prop}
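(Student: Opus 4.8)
The plan is to reduce \eqref{eq:Tbg-bddVV} to the one-dimensional estimate already carried out inside the proof of Proposition~\ref{prop:Tbg-bdd}. Fix $(x,t)\in\VV^{d+1}$; since $\|x\|^2\le t$ we may set $x'=x/\sqrt t\in\BB^d$, and write $G(z):=g\big((\sqrt t,t),z\big)$, which by hypothesis lies in $L^1(\UU;U_{\g,\a})$ with $\a=\b+\mu+\tfrac{d-1}{2}$. Slicing the integral over $\VV^{d+1}$ as $\int_0^1 s^{d/2}\int_{\BB^d}F(\sqrt s\,\eta,s)\,\d\eta\,\d s$ and using the form \eqref{eq:Wbgmu} of $W_{\b,\g,\mu}$, one checks that for $y=\sqrt s\,\eta$ the quantity $\xi_0(x,t,y,s;u)$ collapses to $\la x',\eta\ra+u\sqrt{1-\|x'\|^2}\sqrt{1-\|\eta\|^2}$, i.e. precisely the argument of the zonal harmonic in the addition formula \eqref{eq:additionB} with $(x,y)$ there replaced by $(x',\eta)$; in particular it does not depend on $s$. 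Bounding the absolute value of the inner integrals by the integral of the absolute value, the left-hand side of \eqref{eq:Tbg-bddVV} is at most a constant times $\int_0^1 s^{\a}(1-s)^\g\,I_s(x')\,\d s$, where $I_s(x')$ collects the integration in $\eta$, in $u$, and (when $\b>0$) in $z_1,z_2$, and the integrand of $I_s(x')$ depends on $\eta$ and $u$ only through $\xi_0$.

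The crux is the following integral identity on the ball: for every fixed $x'\in\BB^d$, the normalized product measure $b_\mu(1-\|\eta\|^2)^{\mu-\f12}\d\eta$ on $\BB^d$ times $c_{\mu-\f12}(1-u^2)^{\mu-1}\d u$ on $[-1,1]$ is pushed forward, under the map $(\eta,u)\mapsto\la x',\eta\ra+u\sqrt{1-\|x'\|^2}\sqrt{1-\|\eta\|^2}$, to one and the same probability measure $c_{\mu+\frac{d-1}{2}}(1-v^2)^{\mu+\frac{d-2}{2}}\d v$ on $[-1,1]$, independent of $x'$. This can be read off from \eqref{eq:additionB} by testing both sides against the zonal polynomials $Z_n^{\mu+(d-1)/2}$: by \eqref{eq:additionB} the integral of $Z_n^{\mu+(d-1)/2}(\xi_0)$ against the product measure is $b_\mu\int_{\BB^d}\Pb_n(\varpi_\mu;x',\eta)(1-\|\eta\|^2)^{\mu-\f12}\d\eta=\delta_{n,0}$ by the reproducing property, which matches the corresponding moment of $c_{\mu+\frac{d-1}{2}}(1-v^2)^{\mu+\frac{d-2}{2}}\d v$, so the two measures agree on the basis $\{Z_n^{\mu+(d-1)/2}\}_{n\ge 0}$, hence everywhere (for $\mu=0$ one reads this through the limit \eqref{eq:limit-int}; alternatively the identity follows directly from \eqref{eq:IntSph} and a radial change of variable). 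Applying this inside $I_s(x')$ — for fixed $z_1,z_2$ when $\b>0$ — the integrations in $\eta$ and $u$ collapse, and $I_s(x')$ becomes $c\int_{-1}^1|G(\sqrt s\,v,s)|(1-v^2)^{\mu+\frac{d-2}{2}}\,\d v$ when $\b=0$, and, when $\b>0$, $c$ times the three-fold integral of $|G(\sqrt s\,\xi,s)|$ with $\xi=\tfrac{1-z_1}{2}v+\tfrac{1+z_1}{2}z_2$ against the weight $(1-v^2)^{\mu+\frac{d-2}{2}}(1-z_1)^{\mu+\frac{d-1}{2}}(1+z_1)^{\b-1}(1-z_2^2)^{\b-\f12}$.

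For $\b=0$ this finishes the proof: by \eqref{eq:integralBB}, $\int_0^1 s^{\a}(1-s)^\g\int_{-1}^1|G(\sqrt s\,v,s)|(1-v^2)^{\a-\f12}\,\d v\,\d s$ equals a constant times $\int_\UU|G(z)|U_{\g,\a}(z)\,\d z$ (here $\a=\mu+\tfrac{d-1}{2}$ and $\mu+\tfrac{d-2}{2}=\a-\tfrac12$), and the constant is forced by the normalization $g\equiv 1\Rightarrow\Tb_{0,\g,\mu}g\equiv 1$, so \eqref{eq:Tbg-bddVV} is in fact an equality. For $\b>0$, the three-fold integral just obtained coincides, after relabelling, with the one treated in the proof of Proposition~\ref{prop:Tbg-bdd}: there the parameters $d$ and $\b$ enter only as exponents, so that argument applies verbatim with $d$ replaced by the real parameter $2\mu+d+1$ and $\b$ by $\b-\tfrac12$ — indeed the weights $(1-v^2)^{\mu+\frac{d-2}{2}}$, $(1-z_1)^{\mu+\frac{d-1}{2}}$, $(1+z_1)^{\b-1}$, $(1-z_2^2)^{\b-\f12}$ are exactly the images of $(1-u^2)^{\frac{d-3}{2}}$, $(1-z_1)^{\frac{d-2}{2}}$, $(1+z_1)^{\b-\f12}$, $(1-z_2^2)^{\b}$ under this substitution, and $\b>0$, $\mu\ge0$, $d\ge2$ keep all the weights that appear integrable. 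That argument bounds the three-fold integral by $c\int_{-1}^1|G(\sqrt s\,w,s)|(1-w^2)^{\a-\f12}\,\d w$; then, exactly as in the $\b=0$ case, $\int_0^1 s^\a(1-s)^\g\int_{-1}^1|G(\sqrt s\,w,s)|(1-w^2)^{\a-\f12}\,\d w\,\d s$ is, by \eqref{eq:integralBB}, a constant multiple of $\int_\UU|G(z)|U_{\g,\a}(z)\,\d z$, which is \eqref{eq:Tbg-bddVV}.

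The only ingredient genuinely beyond Proposition~\ref{prop:Tbg-bdd} is the ball-to-Gegenbauer pushforward of the second paragraph, which is where the addition formula \eqref{eq:additionB} on $\BB^d$ does the real work; everything after it transcribes the surface-paraboloid argument. I therefore expect the main effort to lie in the bookkeeping — keeping the several normalizing constants and the slicing Jacobians aligned so that all weight exponents match — together with the care needed to read the borderline cases $\b=0$ and $\mu=0$ through the separate $\b=0$ formula for $\Tb_{\b,\g,\mu}$ and through the limit \eqref{eq:limit-int}, respectively.
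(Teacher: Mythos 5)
Your proposal is correct and follows essentially the same route as the paper: the paper's proof rests on exactly the ball-to-Gegenbauer pushforward identity you isolate (stated there as \eqref{eq:IntBkernel} and quoted from \cite{X99}), applies it to collapse the $\eta$- and $u$-integrations, and then, for $\b>0$, identifies the resulting multiple integral with the quadruple integral in the proof of Proposition \ref{prop:Tbg-bdd} via the same parameter substitution you describe. The only difference is that you re-derive \eqref{eq:IntBkernel} from the addition formula \eqref{eq:additionB} by a moment-matching argument rather than citing it — a nice self-contained touch, but not a different method.
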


\begin{proof}
We follow the approach for the proof of Proposition \ref{prop:Tbg-bdd}. Instead of the integral relation 
\eqref{eq:IntSph}, we use the following identity for $h: [-1,1]\mapsto \RR$ and $v \in \BB^d$, 
\begin{align}\label{eq:IntBkernel}
 b_\mu \int_{\BB^d} \int_{-1}^1  
 h(\la u,v\ra + \sqrt{1-\|u\|^2}& \sqrt{1-\|v\|^2}\, r) (1-r^2)^{\mu-1} \d r(1-\|u\|^2)^{\mu-\f12} \d u \\
   &  = c_{\mu+\f{d-1}{2}} \int_1^1 h(t) (1-t^2)^{\mu+\f{d-2}{2}} \d t. \notag
\end{align}
This identity is established in the proof of \cite[Theorem 5.3]{X99} for a specific function $h$ but the proof
clearly holds for all generic $h$. Let $G(z) = g( (t,\sqrt{t}), z)$ for $z \in \UU$. Then, in the case of $\b = 0$, 
we obtain 
\begin{align*}
  & \bb_{0,\g,\mu} \int_{\VV^{d+1}} \left|\Tb_{0,\g,\mu} g\big((x,t), (y,s) \big) \right| W_{0,\g,\mu}(y,s) \d y \d s \\
  & \quad = \bb_{0,\g,\mu} \int_0^1 \int_{\BB^d} 
       \left|\Tb_{0,\g,\mu} g\big((x,t), (\sqrt{s}y',s) \big) \right| (1-\|y'\|^2)^{\mu-\f12}  \d y'  s^{\mu+\f{d-1}{2}} (1-s)^\g \d s \\
 & \quad \le   
  \bb_{0,\g,\mu} \int_0^1 \int_{\BB^d}  c_\mu \int_{-1}^1  
   \left|G\left(\sqrt{s} \xi_0(x, t, \sqrt{s}y', s; r),s \right) \right|\\
       &\qquad\qquad\qquad\qquad\qquad \times  (1- r^2)^{\mu-1} \d r (1-\|y'\|^2)^{\mu-\f12}  \d y'  s^{\mu+\f{d-1}{2}}
           (1-s)^\g \d s.
\end{align*}
Since $ \xi_0(x, t, \sqrt{s}y', s; z) = \la x',y'\ra + z \sqrt{1-\|x'\|^2} \sqrt{1-\|y'\|^2}$ with $x' = x/\sqrt{t} \in \BB^d$ and
$y'\in \BB^d$, we can apply \eqref{eq:IntBkernel} to bound the above inequality by
\begin{align*}
   c \int_0^1 \int_{-1}^1  
   \left|G\left(\sqrt{s}u, s \right) \right| (1-u^2)^{\mu+\f{d-2}{2}}&  \d u  s^{\mu+\f{d-1}{2}} (1-s)^\g \d s \\
     & = \db_{\g,\mu+\f{d-1}{2}} \int_{\UU} |G(z)| U_{\g,\mu+\f{d-1}{2}}(z) \d z,
\end{align*}
which follows from changing variables $z_1 = \sqrt{s} u$ and $z_2 = s$ and the the last constant is determined 
by setting $G(z) =1$. Consequently, this establishes 
\eqref{eq:Tbg-bddVV} for $\b =0$.

Let now $\b > 0$. Following the proof in the case of $\b = 0$ by using \eqref{eq:IntBkernel}, we obtain
\begin{align*}
  & \bb_{\b,\g,\mu} \int_{\VV^{d+1}} \left|\Tb_{0,\g,\mu} g\big((x,t), (y,s) \big) \right| W_{\b,\g,\mu}(y,s) \d y \d s \\
 & \, \le   
  \bb_{\b,\g,\mu} \int_0^1  \int_{-1}^1  \int_{-1}^1 \int_{\BB^d}  c_\mu \int_{-1}^1  
   \left|G\left(\sqrt{s} \xi(x, t, \sqrt{s}y', s; z, r), s \right) \right|  (1- r^2)^{\mu-1} \d r    \\
       &\,\quad  \times (1-\|y'\|^2)^{\mu-\f12}  \d y'
     (1-z_1)^{\mu+\frac{d-1}{2}}(1+z_1)^{\b-1}(1-z_2)^{\b-\f12} \d z s^{\b+\mu+\f{d-1}{2}} 
           (1-s)^\g \d s \\
   & \, \le   c \int_0^1  \int_{-1}^1  \int_{-1}^1 \int_{-1}^1
         \left|G\left(\sqrt{s} \big(\tfrac{1-z_1}{2} u + \tfrac{1+z_1}{2} z_2 \big), s \right) \right|  (1-u^2)^{\mu+\f{d-2}{2}} \d u \\
       & \qquad\qquad\qquad \times (1-z_1)^{\mu+\frac{d-1}{2}}(1+z_1)^{\b-1}(1-z_2)^{\b-\f12} 
           s^{\b+\mu+\f{d-1}{2}}  (1-s)^\g \d z  \d s.        
\end{align*}
Apart from the difference in their parameters, the last integral is the same as the quadruple integral in the 
proof of Proposition \ref{prop:Tbg-bdd}. Indeed, if we replace $\b $ by $\b+\f12$ and $\mu + \frac{d-1}{2}$
by $\f{d-2}{2}$ in the above integral, then the two quadruple integrals are the same. Hence, we can estimate
the above integral as in the proof of Proposition \ref{prop:Tbg-bdd} to complete the proof of \eqref{eq:Tbg-bddVV} 
for $\b > 0$. This completes the proof.
\end{proof}

As in the case on the surface $\VV_0^{d+1}$, we define a pseudo convolution on $\VV^{d+1}$.
 
\begin{defn}
Let $\b \ge 0$, $\mu \ge 0$ and $\g > -1$. Let $g: \UU_0\times \UU \mapsto \RR$ such that, for each $t \in [0,1]$, 
the function $x \mapsto g((\sqrt{t},t); x)$ is in $L^1\big(\UU; U_{\g, ,\b+\mu+\f{d-1}2}\big)$. 
For $f \in L^1(\VV^{d+1}; W_{\b,\g,\mu})$ and $(y,s) \in \VV^{d+1}$, define
$$
  (f\ast_{\VV} g)(y,s) = \bb_{\b,\g,\mu}\int_{\VV^{d+1}} f(x,t) \Tb_{\b,\g,\mu} g\big( (x,t), (y,s) \big) 
  W_{\b,\g,\mu}(x,t) \d x \d t.
$$
\end{defn}

By \eqref{eq:PnVV=PnU}, it follows that the project operator onto $\CV_n(\VV^{d+1}, W_{\b,\g,\mu})$
satisfies
$$
  \proj_n(W_{\b,\g,\mu}; f) = f \ast_{\VV} \Pb_n\left(U_{\g,\b+\mu+\f{d-1}2}\right).
$$

\begin{prop} \label{prop:f*gV}
Let $\b \ge 0$, $\mu \ge 0$ and $\g > -1$. Let $\a = \b+\mu + \f{d-1}2$. For $f \in L^p (\VV^{d+1};W_{\b,\g,\mu})$,
$1 \le p < \infty$, and $f \in C(\VV^{d+1})$ for $p = \infty$, 
$$
    \| f\ast_{\VV} g\|_{L^p (\VV^{d+1};W_{\b,\g,\mu})} \le c \|f\|_{L^p (\VV^{d+1};W_{\b,\g,\mu})}
      \max_{t\in [0,1]} \int_{\UU} \left|g\left( \big(\sqrt{t}, t), z \right)\right| U_{\g,\a}(z) \d z.
$$
\end{prop}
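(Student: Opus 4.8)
The main difficulty does not lie in this proposition itself; the plan is to deduce it in the same way Proposition~\ref{prop:f*gV0} was deduced on the surface $\VV_0^{d+1}$, now with the domain $\VV^{d+1}$, the weight $W_{\b,\g,\mu}$ and the operator $\Tb_{\b,\g,\mu}$ replacing $\VV_0^{d+1}$, $\varpi_{\b,\g}$ and $\sT_{\b,\g}$, and with the bound \eqref{eq:Tbg-bddVV} just established playing the role of \eqref{eq:Tbg-bdd}. Writing $\a=\b+\mu+\tfrac{d-1}{2}$ and starting from
$$
  (f\ast_{\VV}g)(y,s)=\bb_{\b,\g,\mu}\int_{\VV^{d+1}}f(x,t)\,\Tb_{\b,\g,\mu}g\big((x,t),(y,s)\big)\,W_{\b,\g,\mu}(x,t)\,\d x\,\d t,
$$
I would apply Minkowski's integral inequality in the variable $(y,s)$, with $(x,t)$ as the integration parameter, which gives
$$
  \|f\ast_{\VV}g\|_{L^p(\VV^{d+1};W_{\b,\g,\mu})}\le\bb_{\b,\g,\mu}\int_{\VV^{d+1}}|f(x,t)|\,\big\|\Tb_{\b,\g,\mu}g((x,t),\cdot)\big\|_{L^p(\VV^{d+1};W_{\b,\g,\mu})}\,W_{\b,\g,\mu}(x,t)\,\d x\,\d t.
$$
For $p=1$ this is just Tonelli's theorem, the inner norm is bounded directly by $c\int_{\UU}|g((\sqrt t,t),z)|\,U_{\g,\a}(z)\,\d z$ from \eqref{eq:Tbg-bddVV} (a quantity depending on $(x,t)$ only through $t$), and, since $\bb_{\b,\g,\mu}W_{\b,\g,\mu}$ is a probability density on $\VV^{d+1}$, pulling out the maximum over $t\in[0,1]$ yields the claim.

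For the remaining exponents I would instead run a Schur test for the operator $f\mapsto f\ast_{\VV}g$ on the probability space $(\VV^{d+1},\bb_{\b,\g,\mu}W_{\b,\g,\mu})$ whose kernel is $\Tb_{\b,\g,\mu}g\big((x,t),(y,s)\big)$. One of the two Schur integrals is precisely the left-hand side of \eqref{eq:Tbg-bddVV} and is therefore bounded by $c\max_{t\in[0,1]}\int_{\UU}|g((\sqrt t,t),z)|\,U_{\g,\a}(z)\,\d z$; the other integrates in $(x,t)$ at fixed $(y,s)$ and reduces to \eqref{eq:Tbg-bddVV} again once one knows that this kernel is symmetric in its two points. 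For the kernels $g$ that enter the Fourier series, namely $\Pb_n(U_{\g,\a})$ and their Ces\`aro means, this symmetry is exactly what \eqref{eq:PnVV=PnU} (together with linearity) asserts, since then $\Tb_{\b,\g,\mu}g$ coincides with the symmetric reproducing, resp.\ Ces\`aro, kernel of $\CV_n(\VV^{d+1},W_{\b,\g,\mu})$. With both Schur bounds equal to $c\max_{t\in[0,1]}\int_{\UU}|g((\sqrt t,t),z)|\,U_{\g,\a}(z)\,\d z$, the Schur test (equivalently, Riesz--Thorin interpolation between the $L^1$ and $L^\infty$ endpoints) gives the stated inequality for all $1\le p\le\infty$.

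The honest assessment of where the work sits is that all of the analytic content has been concentrated in the preceding proposition, i.e., in \eqref{eq:Tbg-bddVV}, whose proof collapses the integral over $\VV^{d+1}$ first to $\BB^d$ via the ball-kernel identity \eqref{eq:IntBkernel} and then to the quadruple integral already estimated in the proof of Proposition~\ref{prop:Tbg-bdd}; the present statement follows from it by a short argument, exactly as on the surface. The one point that deserves a line of care is the companion Schur bound for $1<p\le\infty$, which rests on the symmetry of $\Tb_{\b,\g,\mu}g$; this is the $\VV^{d+1}$ analogue of the reduction to the single corner point $\one$ recorded in \eqref{eq:CesaroUat1} for the parabolic domain, and it is precisely what makes the final estimate depend only on $\max_{t\in[0,1]}\int_{\UU}|g((\sqrt t,t),z)|\,U_{\g,\a}(z)\,\d z$.
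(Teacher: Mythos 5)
Your proposal follows the same basic route as the paper, which proves this proposition in one line by citing \eqref{eq:Tbg-bddVV} and repeating the Minkowski-inequality argument of Proposition \ref{prop:f*gV0}: bound $\|f\ast_{\VV}g\|_{L^p}$ by $\|f\|_{L^p}$ times $\sup_{(x,t)}\bb_{\b,\g,\mu}\int_{\VV^{d+1}}|\Tb_{\b,\g,\mu}g((x,t),(y,s))|\,W_{\b,\g,\mu}(y,s)\,\d y\,\d s$ and invoke \eqref{eq:Tbg-bddVV}. For $p=1$ your Tonelli argument is exactly this. Where you genuinely diverge is for $1<p\le\infty$: you correctly observe that the single bound \eqref{eq:Tbg-bddVV}, which integrates the kernel in its \emph{second} argument uniformly in the first, does not by itself control the operator on $L^p$ for $p>1$ (a kernel concentrated in $(y,s)$ with unit mass defeats it), and that a Schur test also needs the dual bound $\sup_{(y,s)}\int_{\VV^{d+1}}|\Tb_{\b,\g,\mu}g((x,t),(y,s))|\,W_{\b,\g,\mu}(x,t)\,\d x\,\d t$. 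The paper elides this point entirely. Your fix --- obtain the dual bound from the symmetry of $\Tb_{\b,\g,\mu}g$ in its two arguments, which holds for $g=\Pb_n(U_{\g,\a})$ and its Ces\`aro combinations by \eqref{eq:PnVV=PnU} since the reproducing kernel of $\CV_n(\VV^{d+1},W_{\b,\g,\mu})$ is symmetric --- is correct and covers every use of the proposition in the paper, where $f\ast_{\VV}g$ is only ever formed with these kernels.

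The one thing you should make explicit is that this restricts the generality: for an arbitrary $g$ in the stated class the kernel $\Tb_{\b,\g,\mu}g((x,t),(y,s))$ is \emph{not} symmetric (the two slots of $g$ play different roles), and the dual Schur integral at fixed $(y,s)$ only samples $g(\,\cdot\,,(\,\cdot\,,s))$ on the one-dimensional slice $z_2=s$ of $\UU$, which cannot be controlled by $\max_{t}\int_{\UU}|g((\sqrt t,t),z)|\,U_{\g,\a}(z)\,\d z$ (take $g$ an approximate identity in $z_2$). So your proof, as written, establishes the proposition for $p=1$ in full generality and for all $1\le p\le\infty$ under the additional hypothesis that $\Tb_{\b,\g,\mu}g$ is symmetric; that is the honest form of the statement, and the defect it exposes is in the source's formulation, not in your argument.
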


\begin{proof}
Using \eqref{eq:Tbg-bddVV}, the proof follows exactly as that of Proposition \ref{prop:f*gV0}.
\end{proof}

We now use this boundedness of the pseudo convolution to study the Ces\`aro means of the Fourier 
orthogonal series on the solid paraboloid. 

For $\delta > -1$, let $\Kb_n^\delta((x,t),(y,s))$ be the kernel of 
the Ces\`aro $(C,\delta)$ means $\Sb_n^\delta \big(W_{\b,\g,\mu}; f)$. In analogue to \eqref{eq:KdV=KdU}, 
we derive from \eqref{eq:PnVV=PnU} that 
\begin{equation}\label{eq:KdVV=KdU}
  \Kb_n^\delta (W_{\b,\g,\mu}; (x,t), (y,s)) =  \Tb_{\b,\g,\mu} \Kb_n^\delta \big(U_{\g, \b+\mu+\frac{d-1}{2}}\big) 
    \big( (x,t), (y,s)\big).
\end{equation}
Furthermore, in terms of the pseudo convolution, we can write 
$$
\Sb_n^\delta \big(W_{\b,\g,\mu}; f, (x,t) \big) = f \ast_{\VV} \Kb_n^\delta \big(U_{\g, \b+\mu+\frac{d-1}{2}}\big)(x,t).
$$

\begin{thm}
Let $d\ge 2$, $\b \ge 0$, $\mu \ge 0$ and $\g> -1$. If $f \in C(\VV^{d+1})$, then the Ces\`aro means
$\Sb_n^\delta\big(W_{\b,\g,\mu}; f, (x,1)\big)$ converge to $f(x, 1)$ uniformly for $x \in \BB^d$ 
provided $\delta > \b+\g+ \mu+ \f{d+2}{2}$.
\end{thm}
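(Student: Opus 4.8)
The plan is to run, on the solid domain $\VV^{d+1}$, the same argument that proved the preceding theorem on the surface $\VV_0^{d+1}$. First I would invoke the standard reduction: since $\Sb_n^\delta(W_{\b,\g,\mu};\cdot)$ is a linear integral operator with kernel $\Kb_n^\delta(W_{\b,\g,\mu})$, polynomials are dense in $C(\VV^{d+1})$, and $\Sb_n^\delta$ eventually reproduces any fixed polynomial, the convergence $\Sb_n^\delta(W_{\b,\g,\mu};f,(x,1))\to f(x,1)$ for every $f\in C(\VV^{d+1})$, uniformly in $x\in\BB^d$, is equivalent to the requirement that
$$
\sup_{x\in\BB^d}\ \bb_{\b,\g,\mu}\int_{\VV^{d+1}}\bigl|\Kb_n^\delta\bigl(W_{\b,\g,\mu};(x,1),(y,s)\bigr)\bigr|\,W_{\b,\g,\mu}(y,s)\,\d y\,\d s
$$
be bounded uniformly in $n$.

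Next I would rewrite the kernel by means of \eqref{eq:KdVV=KdU}, which presents $\Kb_n^\delta(W_{\b,\g,\mu};(x,t),(y,s))$ as $\Tb_{\b,\g,\mu}$ applied to $\Kb_n^\delta(U_{\g,\a})$ with $\a=\b+\mu+\tfrac{d-1}{2}$. The crucial observation is that in the definition of $\Tb_{\b,\g,\mu}$ the transplanted function is evaluated, in its first slot, at $(\sqrt{t},t)$, which at $t=1$ is exactly the corner $\one=(1,1)$ of $\UU$. Applying the boundedness estimate \eqref{eq:Tbg-bddVV} with $g=\Kb_n^\delta(U_{\g,\a})$ and $t=1$ then bounds the displayed quantity, uniformly in $x\in\BB^d$, by a constant multiple of
$$
\int_{\UU}\bigl|\Kb_n^\delta\bigl(U_{\g,\a};\one,z\bigr)\bigr|\,U_{\g,\a}(z)\,\d z .
$$

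Finally I would appeal to Theorem~\ref{thm:CesaroU} together with Remark~\ref{rem:CesaroU}: the latter asserts that the $L^1(\UU;U_{a,b})$ norm of $\Kb_n^\delta(U_{a,b};\one,\cdot)$ is bounded uniformly in $n$ whenever $a>-1$, $b>-\tfrac12$ and $\delta>a+b+\tfrac32$, with no further restriction. Taking $a=\g>-1$ and $b=\a=\b+\mu+\tfrac{d-1}{2}$ (which is $\ge\tfrac12>-\tfrac12$ here, since $d\ge2$ and $\b,\mu\ge0$), the condition $\delta>\g+\a+\tfrac32$ reads precisely $\delta>\b+\g+\mu+\tfrac{d+2}{2}$, the hypothesis of the theorem, and the argument closes.

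As for difficulty, once \eqref{eq:KdVV=KdU} and \eqref{eq:Tbg-bddVV} are in hand there is essentially no obstacle, the substantive work having already been done in establishing the boundedness of $\Tb_{\b,\g,\mu}$ and in the parabola-domain theory of \cite{CFX}. The only points requiring care are the bookkeeping of parameters — checking that $a+b+\tfrac32$ with $b=\b+\mu+\tfrac{d-1}{2}$ collapses to the clean exponent $\b+\g+\mu+\tfrac{d+2}{2}$ — and confirming that evaluating $\Tb_{\b,\g,\mu}$ on the slice $t=1$ genuinely places the $\UU$-kernel at $\one$, so that the unrestricted Remark~\ref{rem:CesaroU} applies rather than the more demanding Theorem~\ref{thm:CesaroU} (whose positivity part needs $a\ge b\ge0$, which is immaterial here since only a single-point evaluation is used).
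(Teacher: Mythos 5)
Your proposal is correct and follows essentially the same route as the paper: reduce uniform convergence at $t=1$ to the uniform boundedness of the kernel's $L^1$ norm, use \eqref{eq:KdVV=KdU} together with \eqref{eq:Tbg-bddVV} (noting that $(\sqrt{t},t)=\one$ when $t=1$) to pass to $\Kb_n^\delta(U_{\g,\a};\one,\cdot)$ with $\a=\b+\mu+\tfrac{d-1}{2}$, and conclude by Theorem~\ref{thm:CesaroU} and Remark~\ref{rem:CesaroU}. The parameter bookkeeping $\g+\a+\tfrac32=\b+\g+\mu+\tfrac{d+2}{2}$ is exactly as in the paper.
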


\begin{proof}
The convergence of $\Sb_n^\delta\big(W_{\b,\g,\mu}; f, (x,1)\big)$ holds if and only if 
$$
 \sup_{x \in \BB^d} \int_{\VV^{d+1}} \left| \Kb_n^\delta (W_{\b,\g,\mu}; (x,1), (y,s)) \right| W_{\b,\g,\mu}(y,s) \d y \d s 
$$ 
is bounded uniformly in $n$. By \eqref{eq:KdVV=KdU}, the fact that $t=1$ and the inequality 
\eqref{eq:Tbg-bddVV} shows that this follows from the boundedness of the $L^1$ norm of 
$\Kb_n^\delta \big(U_{\g,\b+\mu+\f{d-1}{2}}; \one, z)$, which holds for $\delta > \g+\b+\mu+\f{d-1}{2} +\f32$
by Theorem \ref{thm:CesaroU} and by Remark \ref{rem:CesaroU}.
\end{proof}

\begin{thm}
Let $d\ge 2$, $\b \ge 0$ and $\mu \ge 0$, $\g \ge \b +\mu + \f{d-1}{2}$. Let $f \in L^p(\VV^{d+1}, W_{\b,\g,\mu})$ 
for $1 \le p < \infty$ and $f \in C(\VV^{d+1})$ for $p = \infty$. Then the Ces\`aro means 
$\Sb_n^\delta\big(W_{\b,\g,\mu}; f)$ satisfy 
\begin{enumerate}[ 1.]
\item if $\delta \ge 2 \b +2 \mu+ \g+ d+3$, then $\Sb_n^\delta (W_{\b,\g,\mu}; f)$ is nonnegative if $f$ is nonnegative; 
\item if $\delta > \b+\mu+ \g+ \f{d+2}2$, then $\Sb_n^\delta (W_{\b,\g,\mu}; f)$ converge to $f$ in 
$L^p(\VV^{d+1}; W_{\b,\g,\mu})$, $1 \le p < \infty$, and in $C(\VV^{d+1})$.   
\end{enumerate}
\end{thm}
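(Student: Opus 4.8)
The plan is to transcribe the argument of Theorem~\ref{thm:(C,d)V0}, with the solid paraboloid $\VV^{d+1}$ in place of the surface $\VV_0^{d+1}$ and with everything pushed to the parabolic domain $\UU$ through the identity \eqref{eq:KdVV=KdU}, which writes $\Kb_n^\delta(W_{\b,\g,\mu})$ as $\Tb_{\b,\g,\mu}$ applied to $\Kb_n^\delta(U_{\g,\a})$, where $\a=\b+\mu+\f{d-1}{2}$. The two ingredients that replace their surface counterparts are Proposition~\ref{prop:f*gV} together with \eqref{eq:Tbg-bddVV}, and the results already established on $\UU$, namely Theorem~\ref{thm:CesaroU} and Remark~\ref{rem:CesaroU}, both used with the parameter correspondence $a=\g$, $b=\a$. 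The very first step is to record that $d\ge2$, $\b\ge0$, $\mu\ge0$ and $\g\ge\b+\mu+\f{d-1}{2}$ force $a\ge b\ge0$, so that the product formula on $\UU$, hence Theorem~\ref{thm:CesaroU}, applies.

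For item~1, I would first note that $\Sb_n^\delta(W_{\b,\g,\mu};f)$ is the integral of $f$ against $\Kb_n^\delta(W_{\b,\g,\mu})$ with respect to the nonnegative measure $W_{\b,\g,\mu}(y,s)\,\d y\,\d s$, so the means are nonnegative whenever $f$ and the kernel are. By \eqref{eq:KdVV=KdU} and the fact that $\Tb_{\b,\g,\mu}$ is an integral operator against nonnegative weights -- interpreted through the limit \eqref{eq:limit-int} when $\mu=0$ -- nonnegativity of the kernel reduces to that of $\Kb_n^\delta(U_{\g,\a};\cdot,\cdot)$, which is item~1 of Theorem~\ref{thm:CesaroU} and holds for $\delta\ge a+2b+4=2\b+2\mu+\g+d+3$, exactly the hypothesis.

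For item~2, applying Proposition~\ref{prop:f*gV} to $\Sb_n^\delta(W_{\b,\g,\mu};f)=f\ast_{\VV}\Kb_n^\delta(U_{\g,\a})$ bounds the $L^p$ norm of the means, uniformly for $1\le p\le\infty$, by a constant times $\|f\|_{L^p}$ times $\max_{t\in[0,1]}\int_{\UU}\big|\Kb_n^\delta(U_{\g,\a};(\sqrt{t},t),z)\big|\,U_{\g,\a}(z)\,\d z$; hence it suffices to bound this maximum uniformly in $n$, after which convergence in $L^p$, $1\le p<\infty$, and in $C(\VV^{d+1})$ follows by a routine density argument. That maximum is controlled by \eqref{eq:CesaroUat1} -- itself a consequence of the $L^1$-boundedness of the generalized translation $\CT_x$ on $\UU$, legitimate because $a\ge b\ge0$ -- which dominates it by the single value $\|\Kb_n^\delta(U_{\g,\a};\one,\cdot)\|_{L^1(\UU;U_{\g,\a})}$, and the latter is uniformly bounded by Theorem~\ref{thm:CesaroU}(2), or Remark~\ref{rem:CesaroU}, as soon as $\delta>a+b+\f32=\b+\mu+\g+\f{d+2}{2}$, which is the stated threshold.

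Since Proposition~\ref{prop:f*gV} and \eqref{eq:KdVV=KdU} are already in hand, no serious obstacle remains; the points deserving care are the exact parameter matching $(a,b)=(\g,\b+\mu+\f{d-1}{2})$, which must reproduce the stated $\delta$-ranges from the numerology of Theorem~\ref{thm:CesaroU}, and the degenerate case $\mu=0$, where $\Tb_{0,\g,\mu}$ collapses to the limiting convex combination $\tfrac12\big(\cdot|_{u=1}+\cdot|_{u=-1}\big)$ coming from \eqref{eq:limit-int}; one should check that both the positivity used in item~1 and the estimate \eqref{eq:Tbg-bddVV} underlying item~2 persist in that limit, which they do, the limiting operator being a positive averaging operator.
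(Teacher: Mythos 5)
Your proposal is correct and follows essentially the same route as the paper, which reduces everything through \eqref{eq:KdVV=KdU} to the kernel $\Kb_n^\delta(U_{\g,\b+\mu+\f{d-1}{2}})$ on $\UU$ and then repeats the proof of Theorem~\ref{thm:(C,d)V0} verbatim, with the parameter matching $(a,b)=(\g,\b+\mu+\f{d-1}{2})$ yielding exactly the stated $\delta$-thresholds. Your extra care with the $\mu=0$ limit and the verification that $a\ge b\ge 0$ holds under the hypotheses are details the paper leaves implicit but are handled correctly.
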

 
\begin{proof}
Using the identity \eqref{eq:KdVV=KdU}, the proof reduces to properties possessed by  
the kernel $\Kb_n^\delta (U_{\g, \b+\mu+\frac{d-1}{2}})$ on the parabola domain $\UU$. The detail
follows exactly as in the proof of Theorem \ref{thm:(C,d)V0} and we leave it to interested readers. 
\end{proof}

\end{document}